\documentclass[12pt,a4paper,twoside]{article}

\usepackage[applemac]{inputenc}
\usepackage[T1]{fontenc}
\usepackage[english]{babel}
\usepackage{fancyhdr}
\usepackage{indentfirst} 
\usepackage{newlfont}
\usepackage{verbatim}
\usepackage{enumerate}
\usepackage{hyperref}
\usepackage{amsthm}  
\usepackage{amssymb}  
\usepackage{amscd}
\usepackage{amsmath}
\usepackage{latexsym} 
\usepackage{amsfonts}
\usepackage{amsbsy}   
\usepackage{mathrsfs} 
\usepackage{arcs}
\usepackage{yhmath}
\usepackage{graphicx}
\usepackage{color}
\usepackage{subfigure}
\usepackage{lscape}

\theoremstyle{plain}
\newtheorem{teo}{Theorem}[section]

\newtheorem*{teo**}{Theorem \mynumber}
\newenvironment{teo*}[1]
  {\newcommand{\mynumber}{\ref{#1}}\begin{teo**}}
  {\end{teo**}}

\newtheorem*{thm*}{Theorem}
\newtheorem*{clm*}{Claim}
\newtheorem{prop}{Proposition}[section]

\newtheorem{cor}{Corollary}[section]
\newtheorem{lem}{Lemma}[section]

{\Alph{app}}

\makeatletter
\newtheorem*{rep@theorem}{\rep@title}
\newcommand{\newreptheorem}[2]{%
\newenvironment{rep#1}[1]{%
 \def\rep@title{#2 \ref{##1}}%
 \begin{rep@theorem}}%
 {\end{rep@theorem}}}
\makeatother

\newreptheorem{propo}{Proposition}

\newreptheorem{teoo}{Theorem}

\theoremstyle{definition}
\newtheorem{defin}{Definition}[section]
\newtheorem{ese}{Example}[section]

\theoremstyle{remark}
\newtheorem{oss}[teo]{Remark}

\linespread{1.3}                       
\oddsidemargin=30pt \evensidemargin=30pt 
\hyphenation{}                          

\begin{document}

\title{An alternative proof of infinite dimensional Gromov's non-squeezing for compact perturbations of linear maps}

\author{Lorenzo Rigolli\footnote{This work was partially supported by the {DFG grant AB 360/1-1.}  
 \qquad \qquad   \qquad \qquad E-mail:\textit{lorenzo.rigolli@rub.de}.}}
\maketitle

\begin{abstract}
This paper deals with the problem of generalising Gromov's non squeezing theorem to an infinite dimensional Hilbert phase space setting. 
By following the lines of the proof by Hofer and Zehnder of finite dimensional non-squeezing, we recover an infinite dimensional non-squeezing result by Kuksin \cite{Kuk95a} for symplectic diffeomorphisms which are non-linear compact perturbations of a symplectic linear map.
We also show that the infinite dimensional non-squeezing problem, in full generality, can be reformulated as the problem of finding a suitable Palais-Smale sequence for a distinguished Hamiltonian action functional.
\end{abstract}
\ \\ 
%\textbf{Mathematics Subject Classification}: 37J10, 53D22, 70H15.\\

\section*{Introduction}
A \textit{strong symplectic form} on a real Hilbert space $\mathbb{H}$ is a skew-symmetric continuous bilinear form
\begin{align*}
\omega : \mathbb{H} \times \mathbb{H} \rightarrow \mathbb{R},
\end{align*}
which is strongly non-degenerate, namely the associated bounded linear operator $\Omega : \mathbb{H} \rightarrow \mathbb{H}^*$ defined by
\begin{align*}
< \Omega x , y >= \omega (x,y) \ \ \forall x,y \in \mathbb{H},
\end{align*}
where $< \cdot , \cdot >$ denotes the duality pairing, is an isomorphism. A Hilbert space $(\mathbb{H},\omega)$ endowed with a strong symplectic form $\omega$ is said \textit{symplectic Hilbert space}.
In this paper we study the global 2-dimensional rigidity phenomenon given by Gromov's non-squeezing, in the setting of infinite dimensional symplectic Hilbert spaces.\\ 
As first noticed by Zakharov \cite{Zak74}, the Hamiltonian formalism is useful not only to study classical mechanics but also some evolutionary PDEs that satisfy the conservation of energy.
The solutions of these PDEs are given by the Hamiltonian flow associated to a scalar function defined on a distinguished infinite dimensional symplectic space, which is usually uniquely determined by the PDE under consideration.\\
Just to make an example, under suitable assumptions on the nonlinearity $f$, the nonlinear wave equation:
\begin{align*}
u_{tt} - \Delta u + f(u)=0,
\end{align*}
for $u : \mathbb{R} \times \mathbb{T}^n \rightarrow \mathbb{R}$, where $\mathbb{T}^n$ is the $n$-torus, defines a Hamiltonian flow on the Sobolev space $H^{\frac{1}{2}} (\mathbb{T}^n) \times H^{\frac{1}{2}} (\mathbb{T}^n)$ which carries a strong symplectic form. 
For infinite dimensional Hamiltonian systems like this it makes sense to speculate about the validity of non-squeezing.\\  \\
\textbf{(Infinite dimensional squeezing question)} Let $(\mathbb{H},\omega)$ be a symplectic Hilbert space with a compatible inner product.  For $0 < s < r$, is it possible for a Hamiltonian flow to map a ball of radius $r$ in $\mathbb{H}$ into a cylinder given by all vectors of $\mathbb{H}$ whose symplectic projection onto a plane $V$ has distance less than $s$ from a given vector in $V$?\\ \\
As first observed by Kuksin \cite{Kuk95a}, the non-squeezing property for nonlinear Hamiltonian PDEs would have relevant physical implications; for instance in the \textit{transfer of energy problem} of understanding whether the energy of nonlinear conservative oscillations spreads to higher frequencies.  
Moreover non-squeezing would also imply \textit{uniformly asymptotically stability} for solutions of Hamiltonian PDEs.\\
Prompted by these motivations Kuksin showed that the non-squeezing holds if the flow of the PDE is a compact perturbation of a linear one and some technical assumptions are fulfilled \cite{Kuk95a}.
Not all Hamiltonian PDEs have a flow of this form, however under certain conditions this is the case for nonlinear hyperbolic equations like the nonlinear versions of the wave equation, the Schr\"odinger equation, the membrane equation and the string equation.\\
The non-squeezing property was later proved also for some equations whose flow is not a compact perturbation of a linear one, like the cubic Schr\"odinger equation considered by Bourgain \cite{Bou94b}, the KdV equation considered by Colliander, Keel, Staffilani, Takaoka and Tao \cite{CKS+05}, and the BBM equation considered by Roum\'egoux \cite{Rou10}.
This was done using quite delicate approximations of the flow with a finite dimensional one and then applying standard Gromov's non-squeezing.\\
At this point we remark that two different notions of symplectic form are commonly used in the Hilbert space setting, namely \textit{strong} symplectic forms and \textit{weak} symplectic forms.
Strong symplectic forms are usually defined on Sobolev spaces of functions with low regularity and in this case it makes sense to investigate the validity of non-squeezing; anyway it is important to notice that if the phase space associated to the PDE is a Hilbert space of smoother functions which carry only a so-called \textit{weak} symplectic form (namely a closed non-degenerate 2-form), then the squeezing is possible and energy transfer is typical, see for example \cite{Kuk95b} and \cite{CKS+10}.\\
A natural approach to deal with the infinite dimensional squeezing question is extending well known symplectic capacities to infinite dimensional subsets, since the existence of a normalized symplectic capacity implies the non-squeezing.
In this direction, Abbondandolo and Majer \cite{AM15} used the convex geometry tool given by Clarke's duality in order to construct an infinite dimensional symplectic capacity for convex sets and thus to prove a non-squeezing theorem when the image of the unit ball under a symplectomorphism is a convex set.\\

The main goal of this paper is to prove an infinite dimensional non-squeezing result which slightly generalizes the result by Kuksin in \cite{Kuk95a}.
In order to give a precise statement we briefly recall some basic concepts about symplectic structures on infinite dimensional separable Hilbert spaces (for more details see Chapter 2 of \cite{AM15} or \cite{CM74}, in which a more general Banach setting is considered).\\
Let $(\mathbb{H},\omega)$ be a symplectic Hilbert space; the choice of a Hilbert inner product $\langle \cdot, \cdot \rangle$ on $\mathbb{H}$ determines a bounded linear operator
$J : \mathbb{H} \rightarrow \mathbb{H}$ such that
\begin{align*}
\langle Jx,y \rangle = \omega(x,y) \ \ \forall x,y \in \mathbb{H}.
\end{align*}
Being the composition of $\Omega$ by the isomorphism $\mathbb{H}^* \cong \mathbb{H}$ induced by the inner product, $J$ is also an isomorphism. The skew symmetry of $\omega$ now reads as $J^T = -J$, where $J^T : \mathbb{H} \rightarrow \mathbb{H}$ is the adjoint operator with respect to the inner product. \\
%Let $(\mathbb{H},\omega)$ be a symplectic Hilbert space.
We say that an inner product $\langle \cdot , \cdot \rangle$ is \textit{compatible} with $\omega$ if one of the following conditions (which are actually equivalent) holds
\begin{enumerate}[1)]
\item $\Omega$ is an isometry (where $\mathbb{H}^*$ is endowed with the dual norm),
\item $J$ is an isometry,
\item $J$ is a complex structure (i.e. $J^2 = -I$).
\end{enumerate}
One can show that every symplectic Hilbert space $(\mathbb{H},\omega)$ admits a compatible inner product $\langle \cdot , \cdot \rangle$. In general there is not a unique inner product compatible with a fixed $\omega$, nevertheless the unit balls corresponding to different compatible inner products are all linearly symplectomorphic.\\

%Complex Hilbert spaces are basic examples of symplectic Hilbert spaces.
Let $(\mathbb{H},\omega)$ be a symplectic Hilbert space endowed with a compatible inner product $\langle \cdot, \cdot \rangle$ and let $\{e_i,f_i\}_{i \in \mathbb{N}}$ be a countable orthonormal basis such that any $\{e_i,f_i\}$ spans a symplectic plane. For any $n \in \mathbb{N}$ we consider the orthogonal projections
\begin{align}
\label{proiez}
\begin{split}
P_n :\mathbb{H} &\rightarrow \mathbb{H}_n\\
 x& \mapsto \sum_{i=1}^{n} \langle x,e_i \rangle e_i +  \langle x,f_i \rangle f_i,
 \end{split}
\end{align}  
onto the $2n$-dimensional symplectic Hilbert subspace $\mathbb{H}_n$.\\
Let us consider the group of admissible symplectomorphisms
\begin{align*}
Symp_a (\mathbb{H},\omega,\langle \cdot, \cdot \rangle):=& \big{\{} \varphi \in Symp(\mathbb{H},\omega) \ \big{|}  \textrm{ for } k=\pm 1, \ D \varphi^{k} \textrm{ and } D^2 \varphi^{k} \textrm{ are bounded}; \\
& (I-P_n) {\varphi^k}_{\vert P_n \mathbb{H}} \underset{n \rightarrow + \infty} \longrightarrow 0 \textrm{ uniformly on bounded sets}; \\
& [P_n , D \varphi^k (x)^*]  \underset{n \rightarrow + \infty} \longrightarrow 0 \textrm{ in operators' norm, uniformly in} \ x \in \mathbb{H} \\
&\textrm{on bounded sets} \big{\}}.
\end{align*}
We will prove the following infinite dimensional non-squeezing result.
\begin{thm*}[Infinite dimensional non-squeezing]
Let $(\mathbb{H},\omega)$ be a Hilbert symplectic space endowed with a compatible inner product $\langle \cdot, \cdot \rangle$, $B_r$ the ball centred in $0$ with radius $r$ and $Z_R$ a cylinder whose basis lays on a symplectic plane and has symplectic area $\pi R^2$. Let $\varphi \in Symp_a (\mathbb{H},\omega,\langle \cdot, \cdot \rangle)$, if $\varphi(B_r) \subset Z_R$ then $r\leq R$.
\end{thm*}
We will see that the theorem above applies to symplectomorphisms which are compact perturbations of linear maps, under slightly less restrictive assumptions than the ones considered in \cite{Kuk95a}.\\ 
In view of applications we remark that the condition we require on the boundedness of the differentials of $\varphi \in Symp_a (\mathbb{H},\omega, \langle \cdot , \cdot \rangle)$ is not very restrictive, in fact the differentials of any order of the flow of a typical Hamiltonian PDE which is well-posed on its phase space are bounded on bounded sets.\\ 
To prove the theorem we follow the approach adopted by Hofer and Zehnder in order to deduce the non-squeezing theorem for symplectomorphisms of $(\mathbb{R}^{2n},\omega_0)$, see \cite{HZ94}.
What they did is observing that the non-squeezing is implied by the existence of an appropriate critical point for a distinct Hamiltonian action functional and then they proved the existence of such a critical point by applying a minimax argument to the action functional.\\
The main difference in dealing with infinite dimensional spaces instead of with Euclidean spaces is that we face a loss of local compactness: this has two major manifestations.\\
First, unlike in the finite dimensional case, the Hamiltonian action functional we are interested in does not necessarily satisfy the Palais-Smale condition and so we may find (PS) sequences which do not converge to a critical point.
Therefore, in the infinite dimensional setting, (PS) sequences of the action functional play the role which in the Hofer-Zehnder setting is played by critical points.
Secondly, in the infinite dimensional setting, in order to be able to determine a (PS) sequence for the action functional we may have to ask for an additional compactness property.
This compactness can be recovered by restricting the class of symplectomorphisms under consideration to the one of admissible symplectomorphisms.\\
We work with admissible symplectomorphisms only in order to find suitable Palais-Smale sequences, but as we will show, the fact that the infinite dimensional non-squeezing question can be reduced to a purely critical point theory problem holds without imposing any restriction to the set of symplectomorphisms under consideration.\\ \\
\\ \\ 
The paper is organized as follows.\\
In Section \ref{sec1} we introduce the analytical setting in which we will work.\\
In Section \ref{sec2} we characterize (PS) sequences of the Hamiltonian action functional and we show that they are preserved by symplectomorphisms as well as their corresponding almost critical levels.\\
In Section \ref{sec3} we prove the infinite dimensional non-squeezing theorem via a minimax argument in combination with a finite dimensional approximation technique and we notice that this generalizes the non-squeezing theorem by Kuksin.\\
In Section \ref{sec4} we show how, in bigger generality, the infinite dimensional non-squeezing problem can be formulated as a critical point theory problem.\\

{\textbf{Acknowledgments.}}
I would like to warmly thank Alberto Abbondandolo for all the precious help and advice he gave me concerning this paper.
\section{Analytical setting}
\label{sec1}
Let $(\mathbb{H},\omega)$ be a separable symplectic Hilbert space endowed with a compatible inner product. On a countable orthonormal basis $\{e_i,f_i \}_{i \in \mathbb{N}}$ the complex structure is defined by
\begin{align*}
J(e_i) &=f_i,\\
J(f_i) &=-e_i,
\end{align*}
for any $i$.
Given any smooth time-independent Hamiltonian function $H \in  C^{\infty}(\mathbb{H},\mathbb{R})$, the associated Hamiltonian action functional $\mathcal{A}_H: C^\infty (S^1,\mathbb{H}) \rightarrow \mathbb{R}$ can be written as
\begin{align*}
\mathcal{A}_H(x)= -\dfrac{1}{2} \int_0 ^1 \langle J \dot{x}(t) ,x(t) \rangle dt - \int_0 ^1 H(x(t)) dt,
\end{align*}
where $C^\infty (S^1,\mathbb{H})$ is the space of smooth loops with values in $\mathbb{H}$.\\
For our purposes it is more convenient to deal with a Hilbert space than with the space of smooth loops, thus we observe that any loop $x\in C^\infty (S^1,\mathbb{H})$ is an element of  $L^2(S^1,\mathbb{H})$ which can be represented by its Fourier series as
\begin{align*}
x(t)= \sum_{k \in \mathbb{Z}} e^{2 \pi k J t} x^k,
\end{align*}
with coefficients $x^k \in \mathbb{H}$ which are defined by the formula
\begin{align*}
x^k := \int_0^1 e ^{-2\pi kt J } x(t)  dt, \ \ k \in \mathbb{Z}.
\end{align*}
The first term of the action functional describes its symplectic part
\begin{align*}
a(x,y):=-\dfrac{1}{2} \int_0 ^1 \langle J \dot{x}(t) ,y(t) \rangle dt,
\end{align*}
while the second one describes the part depending on the Hamiltonian
\begin{align*}
b_H(x)= \int_0 ^1 H(x(t)) dt.
\end{align*}
Inserting the Fourier expansion of $x,y \in C^\infty (S^1,\mathbb{H})$ into $a(x,y)$ and noticing that 
\begin{align*}
\int_0^1 \langle e^{2 \pi j J t} x^j, e^{2 \pi k J t} y^k\rangle dt = \delta_{jk} \langle x^j , y^k \rangle,
\end{align*}
we obtain that
\begin{align*}
2a(x,y)= 2 \pi  \sum_{j \in \mathbb{Z}} j \langle x^j , y^j \rangle =2 \pi  \sum_{j >0} |j| \langle x^j , y^j \rangle  - 2 \pi  \sum_{j <0} |j| \langle x^j , y^j \rangle.
\end{align*}
Let us recall that for $s$ non-negative, the fractional Sobolev spaces are defined as
 \begin{align*}
H^{s}(S^1,\mathbb{H}):=\{ x \in L^2 (S^1, \mathbb{H}) \ \big{|} \sum_{k\in \mathbb{Z}} ^\infty |k|^{2s} \|x^k \|^2 < \infty \}.
\end{align*}
The standard inner product on the Hilbert space $H^{s}(S^1,\mathbb{H})$ is given by
\begin{align*}
\langle x,y\rangle_{s}:=\langle x^0,y^0\rangle + 2 \pi \sum_{k\neq 0} |k|^{2s} \langle x^k ,y^k\rangle,
\end{align*} 
and the induced norm is
\begin{align*}
| x |_{s}:=\sqrt{\langle x,x\rangle_s}.
\end{align*} 
Therefore, by the previous remark the functional $a$ can be extended to the space 
\begin{align*}
H^{\frac{1}{2}}(S^1,\mathbb{H}):=\{ x \in L^2 (S^1, \mathbb{H}) \ \big{|} \sum_{k \in \mathbb{Z}} ^\infty |k| \|x^k \|^2 < \infty \},
\end{align*}
which is strictly larger than the space of smooth loops. Its elements are loops (not necessarily continuous or bounded) taking values in the Hilbert symplectic space $\mathbb{H}$.\\
On $H^{\frac{1}{2}}(S^1,\mathbb{H})$ it is possible to define another equivalent norm.
The Slobodeckij semi-norm of an element $x \in H^s(S^1,\mathbb{H})$ with $0<s<1$ is the quantity
\begin{align*}
| x | _{sem,s} := \big(\int_{S^1} \int_{S^1} \dfrac{|x(t)-x(r)|^2}{|t-r|^{1+2s}}dtdr\big)^{\frac{1}{2}}.
\end{align*}
This semi-norm induces a norm on $H^{s}(S^1,\mathbb{H})$
\begin{align*}
| x |' _s :=  | x | _{L^2} + | x | _{sem,s},
\end{align*}
such that there exist constants $C_1,C_2 \geq 0$ for which
\begin{align*}
C_1| x |' _s \leq | x | _s \leq C_2 | x |' _s,
\end{align*}
for any $x \in H^s(S^1,\mathbb{H})$.\\
To keep the notation shorter we denote
\begin{align*}
E:=H^{\frac{1}{2}}(S^1,\mathbb{H}).
\end{align*}
Let us consider the orthogonal spitting 
\begin{align*}
E=E^- \oplus E^0 \oplus E^+,
\end{align*}
into the spaces having non vanishing Fourier coefficients only for $k<0$, $k=0$ and $k>0$ respectively.\\
If we denote by $P^-$, $P^0$ and $P^+$ the orthogonal projections onto the linear subspaces $E^-$, $E^0$, $E^+$, we get that any element of $E$ can be written as 
\begin{align*}
x= x^- + x^0 + x^+,
\end{align*}
where $P^-(x)=x^-$,  $P^0(x)=x^0$ and  $P^+(x)=x^+$.\\
Summarizing the previous discussion, we saw that the map
\begin{align*}
a(x,y)= \dfrac{1}{2} \langle (P^+ -P^-)x , y \rangle_{\frac{1}{2}} =\dfrac{1}{2} \langle x^+ , y^+ \rangle_{\frac{1}{2}} - \dfrac{1}{2} \langle x^- , y^- \rangle_{\frac{1}{2}}
\end{align*}
is a continuous bilinear form on $E$, whose associated quadratic form
\begin{align*}
a(x)= a(x,x)= \dfrac{1}{2} |x^+|_{\frac{1}{2}}^2 - \dfrac{1}{2} | x^-|_{\frac{1}{2}}^2
\end{align*}
is  negative definite on $E^-$, positive definite on $E^+$ and has $E^0$ as kernel.\\
Its differential is given by 
\begin{align*}
da(x)[y]= \langle (P^+ -P^-)x , y \rangle_{\frac{1}{2}},
\end{align*}
therefore its gradient with respect to the inner product on $E$ is
\begin{align*}
\nabla_{\frac{1}{2}}a(x)= (P^+ -P^-)x.
\end{align*}
Before proceeding with the study of the action functional $\mathcal{A}_H$ it is useful to state a couple of general results regarding fractional Sobolev spaces.
\begin{prop}
The space $H^{\frac{1}{2}}(S^1,\mathbb{H})$ is continuously embedded into $L^p(S^1,\mathbb{H})$ for any $1\leq p < \infty$. More precisely there exist a constant $C_p$ such that
\begin{align*}
| x |_{L^p} \leq C_p | x |_{\frac{1}{2}},
\end{align*}
for any $H^{\frac{1}{2}}(S^1,\mathbb{H})$.
\end{prop}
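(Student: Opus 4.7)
The plan is to work with the Fourier series representation $x(t) = \sum_{k \in \mathbb{Z}} e^{2\pi k J t} x^k$ and reduce the embedding to a summability estimate on the Fourier coefficients.

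For $p = 2$, Parseval's identity combined with $|k| \geq 1$ for $k \neq 0$ yields
\begin{align*}
|x|_{L^2}^2 = \sum_{k \in \mathbb{Z}} \|x^k\|^2 \leq \|x^0\|^2 + \sum_{k \neq 0} |k| \|x^k\|^2 \leq C\, |x|_{\frac{1}{2}}^2 .
\end{align*}
The range $1 \leq p < 2$ then follows at once from H\"older's inequality, since $S^1$ has finite measure. So the substance of the argument lies in the range $2 < p < \infty$.

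For such $p$, let $p' = p/(p-1) \in (1,2)$ be the conjugate exponent. I would invoke the Hausdorff--Young inequality for Hilbert-valued periodic functions,
\begin{align*}
|x|_{L^p} \leq \Bigl( \sum_{k \in \mathbb{Z}} \|x^k\|^{p'} \Bigr)^{1/p'},
\end{align*}
split off the $k = 0$ term, and estimate the remainder by H\"older with conjugate exponents $2/p'$ and $2/(2-p')$, after writing $\|x^k\|^{p'} = \bigl(|k|^{1/2}\|x^k\|\bigr)^{p'} |k|^{-p'/2}$. This gives
\begin{align*}
\sum_{k \neq 0} \|x^k\|^{p'} \leq \Bigl( \sum_{k \neq 0} |k| \|x^k\|^2 \Bigr)^{p'/2} \Bigl( \sum_{k \neq 0} |k|^{-p'/(2-p')} \Bigr)^{(2-p')/2}.
\end{align*}
The second series converges precisely because $p'/(2-p') > 1$, which is equivalent to $p' > 1$, i.e.\ $p < \infty$. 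The first factor is controlled by $|x|_{\frac{1}{2}}^{p'}$, and together with the trivial bound $\|x^0\| \leq |x|_{\frac{1}{2}}$ one obtains $|x|_{L^p} \leq C_p\, |x|_{\frac{1}{2}}$.

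The main obstacle is the appeal to Hausdorff--Young for $\mathbb{H}$-valued rather than scalar functions. This extension is valid because $\mathbb{H}$ is a Hilbert space (hence of type and cotype $2$), and it may be obtained by Riesz--Thorin interpolation between the elementary bounds at $p = 1$ and $p = 2$ carried out in the vector-valued setting. An alternative route that avoids this machinery would be to establish the embedding first at even integer exponents $p = 2n$, by expanding $\int \|x(t)\|^{2n}\,dt$ via the Fourier series and directly bounding the resulting multilinear sum through the $H^{\frac{1}{2}}$-norm, and then interpolating via H\"older to reach all finite $p$.
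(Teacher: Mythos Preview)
Your argument is correct and takes a genuinely different route from the paper. The paper's proof (following Hofer--Zehnder) proceeds via the trace theorem: one extends $x \in H^{1/2}(S^1,\mathbb{H})$ harmonically to $\tilde{x} \in H^1(\mathbb{D},\mathbb{H})$ on the unit disk and then invokes the two-dimensional Sobolev trace inequality, which controls $|x|_{L^p(S^1)}$ by $|\tilde{x}|_{H^1(\mathbb{D})}$ for every finite $p$. Your approach instead stays on the circle and is purely Fourier-analytic: Hausdorff--Young in the synthesis direction $\ell^{p'} \to L^p$, followed by a H\"older splitting of the coefficient sum. Your identification of the vector-valued Hausdorff--Young step as the only nontrivial point is accurate, and the justification by interpolation between the elementary bound $\ell^1(\mathbb{Z},\mathbb{H}) \to L^\infty(S^1,\mathbb{H})$ and Parseval $\ell^2(\mathbb{Z},\mathbb{H}) \to L^2(S^1,\mathbb{H})$ (the latter valid precisely because $\mathbb{H}$ is Hilbert) is exactly what is needed. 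Your route is more self-contained in that it avoids the two-dimensional extension and trace machinery; the paper's route has the virtue of relying only on classical integer-order Sobolev facts, and the same harmonic-extension device is reused later when the paper establishes boundedness properties of the composition operator $\varphi_*$ on $H^{1/2}$.
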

\proof
Using the trace theorem the result can be reduced to an application of the Sobolev's embedding theorem for integer Sobolev spaces.
See Appendix A.3 of \cite{HZ94} for the analogous proof in the case of loops taking values in $\mathbb{R}^{2n}$.
\endproof
\begin{lem}
\label{lemgrad}
Given any $s\geq 0$, let us consider the continuous inclusion
\begin{align*}
T: H^{s}(S^1,\mathbb{H}) \rightarrow L^2(S^1,\mathbb{H}).
\end{align*}
The adjoint operator $T^*$ can be represented as
\begin{align}
\label{vvv}
T^* (y)=y^0 + \sum_{k\neq 0} \dfrac{1}{2 \pi |k|^{2s}}e^{2 \pi k J t} y^k,
\end{align}
and if $f:H^{s}(S^1,\mathbb{H}) \rightarrow \mathbb{R}$ is a differentiable map, then the following equality holds
\begin{align}
\label{uuu}
T^* \nabla_{L^2} f(x)=\nabla_{s} f(x),
\end{align}
for any $x \in H^{s}(S^1,\mathbb{H})$.
\end{lem}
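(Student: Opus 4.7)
The plan is to prove the two claims in turn, both by direct computation on Fourier coefficients, exploiting the fact that the family $\{e^{2\pi kJt}v\}_{k\in\mathbb{Z},\, v\in\mathbb{H}}$ is an orthogonal system in both $L^2(S^1,\mathbb{H})$ and $H^s(S^1,\mathbb{H})$ whose norms differ only by the explicit weights appearing in the inner product formulas recalled above.

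First, I would verify the formula \eqref{vvv} for $T^*$. By definition $T^*:L^2(S^1,\mathbb{H})\to H^s(S^1,\mathbb{H})$ is characterized by the identity
\begin{equation*}
\langle Tx,y\rangle_{L^2}=\langle x,T^*y\rangle_s\qquad \text{for all } x\in H^s(S^1,\mathbb{H}),\ y\in L^2(S^1,\mathbb{H}).
\end{equation*}
Expanding $x=\sum_k e^{2\pi kJt}x^k$ and $y=\sum_k e^{2\pi kJt}y^k$ and using the orthogonality relation $\int_0^1\langle e^{2\pi jJt}x^j,e^{2\pi kJt}y^k\rangle\,dt=\delta_{jk}\langle x^j,y^k\rangle$, the left-hand side equals $\sum_{k\in\mathbb{Z}}\langle x^k,y^k\rangle$. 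Now define $z\in H^s(S^1,\mathbb{H})$ by the candidate formula on the right of \eqref{vvv}, so $z^0=y^0$ and $z^k=\frac{1}{2\pi|k|^{2s}}y^k$ for $k\neq 0$; a quick check of summability shows $z$ indeed belongs to $H^s(S^1,\mathbb{H})$ whenever $y\in L^2(S^1,\mathbb{H})$, since $\sum_k|k|^{2s}\|z^k\|^2 = \frac{1}{(2\pi)^2}\sum_{k\neq 0}|k|^{-2s}\|y^k\|^2<\infty$. Then using the $H^s$ inner product formula,
\begin{equation*}
\langle x,z\rangle_s=\langle x^0,y^0\rangle+2\pi\sum_{k\neq 0}|k|^{2s}\bigl\langle x^k,\tfrac{1}{2\pi|k|^{2s}}y^k\bigr\rangle=\sum_{k\in\mathbb{Z}}\langle x^k,y^k\rangle,
\end{equation*}
which matches the $L^2$ pairing; by uniqueness of the adjoint we conclude $T^*y=z$.

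For the second identity \eqref{uuu}, I would argue from the defining property of the gradient together with the adjoint identity just proved. If $f:H^s(S^1,\mathbb{H})\to\mathbb{R}$ is differentiable and its differential $df(x)$ extends to a continuous linear functional on $L^2(S^1,\mathbb{H})$ with $L^2$-Riesz representative $\nabla_{L^2}f(x)$, then for every $v\in H^s(S^1,\mathbb{H})$,
\begin{equation*}
\langle\nabla_s f(x),v\rangle_s=df(x)[v]=\langle\nabla_{L^2}f(x),Tv\rangle_{L^2}=\langle T^*\nabla_{L^2}f(x),v\rangle_s,
\end{equation*}
where the last equality uses the adjoint property. Since this holds for every $v\in H^s(S^1,\mathbb{H})$, the non-degeneracy of $\langle\cdot,\cdot\rangle_s$ forces $\nabla_s f(x)=T^*\nabla_{L^2}f(x)$.

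I do not expect a genuine obstacle here: the computation of $T^*$ is a straightforward Fourier bookkeeping task, and the gradient identity is essentially a tautology once $T^*$ is known. The only mildly delicate point is making sure the proposed series for $T^*y$ converges in $H^s$ for arbitrary $y\in L^2$, which is handled by the $|k|^{-2s}$ decay of its Fourier coefficients.
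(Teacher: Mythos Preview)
Your argument is correct and follows essentially the same route as the paper: compute $T^*$ by matching Fourier coefficients via the adjoint identity $\langle Tx,y\rangle_{L^2}=\langle x,T^*y\rangle_s$, then derive \eqref{uuu} from the chain $\langle\nabla_s f(x),v\rangle_s=df(x)[v]=\langle\nabla_{L^2}f(x),Tv\rangle_{L^2}=\langle T^*\nabla_{L^2}f(x),v\rangle_s$. You are slightly more careful than the paper in verifying that the candidate series actually lies in $H^s$ and in making explicit the hypothesis that $df(x)$ has an $L^2$-representative, but these are refinements rather than a different approach.
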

\proof
Using the definition of adjoint we get
\begin{align*}
\sum_{k\in \mathbb{Z}} \langle x^k,y^k \rangle = \langle x^0,T^*(y)^0 \rangle + 2 \pi \sum_{k \neq 0} |k|^{2s} \langle x^k,T^*(y)^k \rangle
\end{align*}
for any $x \in H^{s}$ and $y\in L^2$, thus the equality \eqref{vvv}.\\
For any $x, y \in H^{s}$ we get
\begin{align*}
 \langle  T^* \nabla_{L^2} f(x), y  \rangle_{s} = \langle  \nabla_{L^2} f(x),Ty \rangle_{L^2} = \langle \nabla_{L^2} f(x) , y \rangle_{L^2}  = df(x)[y] = \langle \nabla_{s} f(x) , y \rangle_{s}
\end{align*}
hence the equality \eqref{uuu}.
\endproof

We are now ready to focus on the properties of the Hamiltonian part of the action functional $\mathcal{A}_H$, namely
\begin{align*}
b_H(x)= \int_0 ^1 H(x(t)) dt,
\end{align*}
with $H \in C^{\infty}(\mathbb{H},\mathbb{R})$ autonomous Hamiltonian.
\begin{prop}
\label{diff}
If all the derivatives of $H \in C^{\infty}(\mathbb{H},\mathbb{R})$ have at most polynomial growth, i.e. if for every $k\in \mathbb{N}$ there are constants $C_k$ and $N_k$ such that 
\begin{align*}
\| d^k H(x) \| \leq C_k (1+ \| x\|^{N_k}) \ \ \ \ \forall x \in \mathbb{H},
\end{align*}
then the functional $b_H:E \rightarrow \mathbb{R}$ is $C^\infty$. Moreover
\begin{align*}
d^k b_H (x)[u]^k= \int _0 ^1 d^k H(x(t))[u(t)]^k dt \ \ \forall x,u \in E.
\end{align*}
\end{prop}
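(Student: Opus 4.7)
The plan is to combine the Sobolev embedding $E \hookrightarrow L^p(S^1,\mathbb{H})$ for every $p\in[1,\infty)$ (from the preceding proposition) with the polynomial growth of the derivatives of $H$, and to proceed by induction on $k$. I would first observe that integrating $\|dH(\cdot)\|$ along rays from the origin promotes the bound on $dH$ to polynomial growth of $H$ itself, so that $H\circ x \in L^1(S^1,\mathbb{R})$ for every $x\in E$ (since $\|x(\cdot)\|$ then lies in $L^p$ for every finite $p$) and $b_H$ is well-defined. The candidate for $db_H(x)$ is the linear form $L_x[u]:=\int_0^1 dH(x(t))[u(t)]\,dt$; using the growth bound on $dH$ together with H\"older's inequality and the Sobolev embedding, one controls
\begin{equation*}
|L_x[u]| \le C_1\!\int_0^1 (1+\|x(t)\|^{N_1})\|u(t)\|\,dt \le C'(1+|x|_{1/2}^{N_1})\,|u|_{1/2},
\end{equation*}
so that $L_x \in E^*$.

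To identify $L_x$ with $db_H(x)$, I would apply Taylor's formula pointwise,
\begin{equation*}
H(x(t)+u(t))-H(x(t))-dH(x(t))[u(t)] = \int_0^1 (1-s)\,d^2H(x(t)+su(t))[u(t),u(t)]\,ds,
\end{equation*}
integrate in $t$, and bound again by H\"older plus embedding; the remainder is then $O\bigl((1+|x|_{1/2}+|u|_{1/2})^{N_2}\,|u|_{1/2}^2\bigr)=o(|u|_{1/2})$. Continuity of $x\mapsto L_x$ follows from the analogous Taylor-plus-H\"older estimate applied to $dH$ in place of $H$.

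For the inductive step the candidate $k$-th differential is the symmetric multilinear form $M_x[u_1,\dots,u_k]:=\int_0^1 d^kH(x(t))[u_1(t),\dots,u_k(t)]\,dt$, and its boundedness, its being the derivative of $d^{k-1}b_H$ at $x$, and its continuity in $x$ all follow by the same pattern of estimates, with $d^{k+1}H$ entering the Taylor remainder. The main obstacle I expect is purely the bookkeeping of H\"older exponents adapted to the polynomial growth exponents $N_k$; once those are chosen appropriately the estimates collapse to routine ones, because the embedding $E\hookrightarrow L^p$ for \emph{every} finite $p$ absorbs arbitrary polynomial nonlinearities.
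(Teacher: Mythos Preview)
Your proposal is correct and follows essentially the same route as the paper: well-definedness via the embedding $E\hookrightarrow L^p$ for all finite $p$, then pointwise Taylor expansion of $H$ combined with H\"older's inequality and the embedding to control the remainder and obtain the differentials inductively. The paper's proof is only a two-line sketch of exactly this argument, so you have simply filled in the details it omits.
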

\proof
The estimate on the growth of $H$ and the fact that $H^{\frac{1}{2}}(S^1, \mathbb{H})$ is continuously embedded in any $L^p$ space for $p\neq \infty$ imply that $b_H$ is defined on $E$.
The Taylor expansion of $H$ allows to recover the Taylor series of $b_H$ and again using the hypothesis on $H$ and the continuity of the embedding of $H^{\frac{1}{2}}(S^1,\mathbb{H})$ into any $L^p(S^1,\mathbb{H})$ one deduces that $b_H$ has the same regularity as $H$.
\endproof

Starting from this point all the Hamiltonians $H \in C^{\infty}(\mathbb{H},\mathbb{R})$ we consider are assumed to have globally Lipschitz continuous gradient
\begin{align*}
\mathbb{H} &\rightarrow \mathbb{H}\\
x &\mapsto \nabla H(x).
\end{align*}

\begin{lem}
The map
\begin{align*}
H^{\frac{1}{2}}(S^1,\mathbb{H}) &\rightarrow H^{\frac{1}{2}}(S^1,\mathbb{H}) \\
x &\mapsto \nabla_{\frac{1}{2}} b_H(x) 
\end{align*}
is globally Lipschitz continuous.
\label{lemlip}
\end{lem}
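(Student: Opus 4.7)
The plan is to combine Lemma \ref{lemgrad} with the Lipschitz hypothesis on $\nabla H$ and the mapping properties of the inclusion $T:H^{1/2}(S^1,\mathbb{H})\to L^2(S^1,\mathbb{H})$ and of its adjoint $T^*$. First I would apply Proposition \ref{diff} with $k=1$ to identify the $L^2$-gradient of $b_H$ pointwise as
\begin{align*}
\nabla_{L^2}b_H(x)(t)=\nabla H(x(t)),
\end{align*}
and then use formula \eqref{uuu} of Lemma \ref{lemgrad} (with $s=\tfrac12$) to rewrite the $H^{1/2}$-gradient as $\nabla_{1/2}b_H(x)=T^{*}\nabla_{L^2}b_H(x)$.

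Next I would estimate each factor. The global Lipschitz hypothesis on $\nabla H:\mathbb{H}\to\mathbb{H}$ with constant $L$ immediately gives
\begin{align*}
|\nabla_{L^2}b_H(x)-\nabla_{L^2}b_H(y)|_{L^2}^{2}=\int_{0}^{1}\|\nabla H(x(t))-\nabla H(y(t))\|^{2}\,dt\leq L^{2}|x-y|_{L^2}^{2}.
\end{align*}
Using the continuous embedding $H^{1/2}\hookrightarrow L^{2}$ (Proposition above, with $p=2$) we get $|x-y|_{L^2}\leq C_2|x-y|_{1/2}$.

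Finally I would show that $T^{*}:L^{2}(S^1,\mathbb{H})\to H^{1/2}(S^1,\mathbb{H})$ is bounded. From the explicit formula \eqref{vvv} with $s=\tfrac12$, for $y\in L^{2}(S^1,\mathbb{H})$
\begin{align*}
|T^{*}y|_{1/2}^{2}=\|y^{0}\|^{2}+2\pi\sum_{k\neq 0}|k|\cdot\frac{1}{(2\pi|k|)^{2}}\|y^{k}\|^{2}=\|y^{0}\|^{2}+\frac{1}{2\pi}\sum_{k\neq 0}\frac{1}{|k|}\|y^{k}\|^{2}\leq|y|_{L^2}^{2},
\end{align*}
so $\|T^{*}\|_{L^{2}\to H^{1/2}}\leq 1$. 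Chaining the three estimates yields
\begin{align*}
|\nabla_{1/2}b_H(x)-\nabla_{1/2}b_H(y)|_{1/2}\leq\|T^{*}\|\,L\,C_{2}\,|x-y|_{1/2},
\end{align*}
which is the claimed global Lipschitz bound.

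There is no real obstacle here: the proof is a straightforward chain of inclusions $H^{1/2}\hookrightarrow L^{2}\xrightarrow{\nabla H\circ\cdot}L^{2}\xrightarrow{T^{*}}H^{1/2}$, in which the middle step is handled by the hypothesis and the outer two by boundedness of $T$ and $T^{*}$. The only mildly delicate point is checking that $T^{*}$ actually lands in $H^{1/2}$ and not merely in $L^{2}$, and this is taken care of by the explicit diagonal formula \eqref{vvv}, which introduces the damping factor $|k|^{-1}$ that exactly compensates the weight $|k|$ in the $H^{1/2}$-norm.
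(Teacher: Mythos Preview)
Your proof is correct and follows essentially the same route as the paper: write $\nabla_{1/2}b_H(x)=T^*\nabla_{L^2}b_H(x)=T^*\nabla H(x)$ via \eqref{uuu}, use the Lipschitz hypothesis on $\nabla H$ at the $L^2$ level, and bound $T^*:L^2\to H^{1/2}$ using the explicit Fourier formula \eqref{vvv}. The only cosmetic difference is that the paper bounds $T^*$ into the slightly stronger $H^1$ norm (their inequality $|T^*y|_1\le|y|_{L^2}$) and then implicitly uses $|\cdot|_{1/2}\le|\cdot|_1$, while you compute the $H^{1/2}$ bound directly.
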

\proof
If we consider the inclusion
\begin{align*}
T: H^{\frac{1}{2}}(S^1,\mathbb{H}) \rightarrow L^2(S^1,\mathbb{H}),
\end{align*}
equality \eqref{vvv} implies that 
\begin{align}
\label{lll}
| T^* (y)|_1 \leq | y|_{L^2}.
\end{align}
Using \eqref{uuu} we obtain
\begin{align*}
T^* \nabla H(x) =T^* \nabla_{L^2} b_H(x)=\nabla_{\frac{1}{2}} b_H(x)
\end{align*}
and by \eqref{lll} together with the assumption that $\nabla H$ is Lipschitz continuous on $\mathbb{H}$, we get that
\begin{align*}
| \nabla_{\frac{1}{2}} b_H(x) - \nabla_{\frac{1}{2}} b_H (y) |_{\frac{1}{2}} &=| T^* ( \nabla H(x) -  \nabla H(y)) |_{\frac{1}{2}}\\
&\leq | \nabla H(x) -  \nabla H(y) |_{L^2} \\
& \leq  C | x-y|_{L^2}\\
& \leq  C | x-y|_{\frac{1}{2}}
\end{align*}
and hence
\begin{align*}
| \nabla_{\frac{1}{2}} b_H(x) - \nabla_{\frac{1}{2}} b_H (y) |_{\frac{1}{2}}\leq C | x-y|_{\frac{1}{2}}.
\end{align*}
\endproof
For any $k\in \mathbb{Z}$, let $P^k:E \rightarrow E$ be the orthogonal projector onto the space corresponding to the $k$-th Fourier mode
\begin{align*}
(P^k x)(t)= e^{2 \pi k t J } x^k, \ \ x^k \in \mathbb{H}.
\end{align*}
We search for an estimate on the norm of the $k$-th Fourier coefficients $\nabla_{\frac{1}{2}} b_H(x)^k$ of $\nabla_{\frac{1}{2}} b_H(x)$.
\begin{lem}
\label{growthc}
Let $H \in C^{\infty}(\mathbb{H},\mathbb{R})$ be a Hamiltonian whose second order differential is bounded. For any $\frac{1}{2} \leq s \leq 1$ there exists a constant $C_s>0$ such that
\begin{align*}
\|\nabla_{\frac{1}{2}} b_H(x)^k \| \leq  \dfrac{C_s(1+|x|_{s})}{|k|^{s+1}}
\end{align*}
for any $k \in \mathbb{Z} \backslash \{ 0\}$ and any $x \in H^s (S^1,\mathbb{H})$.\\
Moreover if $y \in H^s (S^1,\mathbb{H})$ it holds
\begin{align*}
\|\nabla_\frac{1}{2} b_H(x)^k - \nabla_\frac{1}{2} b_H(y)^k \| \leq \dfrac{|\nabla H(x) - \nabla H (y) |_{s}}{ 2\pi |k|^{s+1}}.
\end{align*}
\end{lem}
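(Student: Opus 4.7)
The plan splits the estimate into two independent pieces: an algebraic Fourier-side identity peeling off one factor of $|k|^{-1}$, and a regularity estimate on the map $t\mapsto \nabla H(x(t))$ producing the remaining factor $|k|^{-s}$.

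First, I apply Lemma \ref{lemgrad} with $s=\frac{1}{2}$. Since $\nabla_{L^2}b_H(x)$ is pointwise given by $\nabla H(x(\cdot))$, identity \eqref{uuu} combined with the explicit formula \eqref{vvv} for $T^*$ yields, for $k\neq 0$,
\begin{equation*}
\nabla_{\frac{1}{2}}b_H(x)^k \;=\; \frac{1}{2\pi|k|}\bigl(\nabla H\circ x\bigr)^k.
\end{equation*}
This already peels off one factor of $|k|^{-1}$; the remaining factor $|k|^{-s}$ must come from the $H^s$-regularity of $\nabla H\circ x$.

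To obtain this regularity I show that $\nabla H\circ x\in H^s(S^1,\mathbb{H})$ with $|\nabla H\circ x|_{s}\leq C_s'(1+|x|_{s})$. The boundedness of $d^2 H$ makes $\nabla H$ globally Lipschitz on $\mathbb{H}$ with constant $L=\|d^2 H\|_\infty$, so plugging the pointwise bound $\|\nabla H(x(t))-\nabla H(x(r))\|\leq L\|x(t)-x(r)\|$ into the Slobodeckij semi-norm gives $|\nabla H\circ x|_{sem,s}\leq L\,|x|_{sem,s}$ in the range $\frac{1}{2}\leq s<1$; combined with the $L^2$ bound coming from $\|\nabla H(x(t))\|\leq\|\nabla H(0)\|+L\|x(t)\|$ and with the equivalence of $|\cdot|_s$ and $|\cdot|'_s$, this yields the displayed bound. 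The endpoint $s=1$ has to be treated separately using the chain rule for weak derivatives: $\frac{d}{dt}\nabla H(x(t))=d^2 H(x(t))[\dot x(t)]$ lies in $L^2$ whenever $\dot x$ does, with norm at most $L|\dot x|_{L^2}$.

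To close the first claim I combine the Fourier identity with the elementary Parseval-type bound $\|y^k\|\leq |y|_s/(\sqrt{2\pi}\,|k|^s)$ for $k\neq 0$, which is immediate from the definition of $|\cdot|_s$. For the Lipschitz-type second claim the very same Fourier identity, applied now to $\nabla H\circ x-\nabla H\circ y$, together with the same Parseval-type bound applied directly to this difference, produces the result without any need for the composition estimate; this is why its right-hand side involves $|\nabla H(x)-\nabla H(y)|_s$ rather than $|x-y|_s$. The main technical hurdle is the composition estimate $|\nabla H\circ x|_s\leq C_s'(1+|x|_s)$ itself: the Slobodeckij characterization makes it nearly tautological in the interior range, but the endpoint $s=1$ requires the separate weak-differentiation argument indicated above.
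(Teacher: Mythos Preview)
Your proof is correct and uses the same ingredients as the paper's: the formula from Lemma~\ref{lemgrad} identifying the $k$-th Fourier coefficient of $\nabla_{\frac{1}{2}}b_H(x)$ with $\frac{1}{2\pi|k|}(\nabla H\circ x)^k$, the composition estimate $|\nabla H\circ x|_s\leq c_s(1+|x|_s)$ coming from the Lipschitz bound on $\nabla H$, and the Parseval-type control of individual Fourier coefficients by the $H^s$-norm. Your organization is a bit more direct than the paper's---which first computes $|P^k\nabla_s b_H(x)|_s$ via a duality argument and only afterwards converts $\nabla_s b_H$ to $\nabla_{\frac{1}{2}} b_H$ through Lemma~\ref{lemgrad}---and your separate treatment of the endpoint $s=1$ by the weak chain rule is a welcome clarification that the paper leaves implicit.
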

\proof
For $k\neq 0$ we notice that
\begin{align}
\label{a2}
|P^k x|_{s}= \sqrt{2 \pi}|k|^s \|x^k\|. 
\end{align}
We compute
\begin{align*}
|P^k \nabla_s b_H(x) |_s&= \max_{|u|_s=1} \langle P^k \nabla_s b_H(x),u \rangle_s =\max_{|u|_s=1} \langle \nabla_s b_H(x),P^k u \rangle_s\\
&=\max_{|u|_s=1}  d b_H(x)[P^k u]  =\max_{|u|_s=1}  \int_0 ^1 \langle \nabla H(x)(t), (P^k u)(t) \rangle dt\\
&=\max_{\substack{y \in \mathbb{H} \\ 2 \pi |k|^{2s} \|y\|^2=1}}  \int_0 ^1 \langle \nabla H(x)(t), e^{2 \pi k t J} y \rangle dt\\
&= \max_{\substack{y \in \mathbb{H} \\ 2 \pi |k|^{2s} \|y\|^2=1}}  \int_0 ^1 \langle  e^{-2 \pi k t J}  \nabla H(x)(t),y \rangle dt\\
&=\max_{\substack{y \in \mathbb{H} \\ \|y\|=\dfrac{1}{\sqrt{2 \pi}|k|^s}}} \langle y , \int_0 ^1 e^{-2 \pi k t J}  \nabla H(x)(t) dt \rangle\\
&=\max_{\substack{y \in \mathbb{H} \\ \|y\|=\dfrac{1}{\sqrt{2 \pi}|k|^s}}} \langle y ,  \nabla H (x) ^k \rangle\\
&=\dfrac{1}{\sqrt{2 \pi}|k|^s} \| \nabla H(x) ^k\|\\
\end{align*}
where the second to last equality follows from the integral definition of the Fourier coefficients.
By \eqref{a2} it follows
\begin{align*}
|P^k \nabla_s b_H(x) |_s =\dfrac{\| \nabla H (x)^k \|}{\sqrt{2 \pi}|k|^s} =\dfrac{|P^k \nabla H(x)|_{s}}{2 \pi |k|^{2s}}.
\end{align*}
Using the integral definition of the $H^s$-norm and the assumptions on the boundedness of the second differential of $H$ we get the estimate
\begin{align*}
|P^k \nabla H(x)|_s &\leq |\nabla H (x)|_s \leq |\nabla H (x)+  \nabla H(0)- \nabla H(0)|_s \\
&\leq |\nabla H (x)-  \nabla H(0)|_s + |\nabla H(0)|_s  \leq c_s (1+ |x|_s),
\end{align*}
for any $s \leq 1$.\\ %n.b. qui posso dire perche nabla H sta in H^s
In conclusion, combining \eqref{a2} with the two estimates above we get
\begin{align}
\label{ccc}
\|\nabla_{s} b_H(x)^k \|= \dfrac{|P^k \nabla_{s} b_H(x)|_s }{2 \pi |k|^{s}}\leq \dfrac{C_s(1+|x|_{s})}{|k|^{3s}}.
\end{align}
For any $\frac{1}{2} \leq s \leq 1$ and any integer $k \neq 0$, it follows from Lemma \ref{lemgrad} that
\begin{align*}
\|\nabla_{s} b_H(x)^k \| =\dfrac{\|\nabla_\frac{1}{2} b_H(x)^k \|}{|k|^{2s-1}}
\end{align*}
hence using \eqref{ccc} we deduce
\begin{align*}
\|\nabla_\frac{1}{2} b_H(x)^k \| \leq \dfrac{C_s(1+ |x|_{s})}{|k|^{s+1}}.
\end{align*}
Performing an analogous computation we also get that if $x,y \in H^s(S^1,\mathbb{H})$ then
\begin{align*}
\|\nabla_\frac{1}{2} b_H(x)^k - \nabla_\frac{1}{2} b_H(y)^k \| \leq \dfrac{ |\nabla H(x) - \nabla H (y) |_{s}}{2 \pi|k|^{s+1}}.
\end{align*}
\endproof
In the case $\mathbb{H}= \mathbb{R}^{2n}$, under the assumption on the polynomial growth of the derivatives of $H$, it is possible to deduce that the map
\begin{align*}
E &\rightarrow E\\
x &\mapsto \nabla_{\frac{1}{2}} b_H(x)
\end{align*} 
is compact, namely that sends bounded sets into precompact sets (i.e. sets whose closure is compact).
This follows from the Sobolev's embedding theorem which implies that $H^{\frac{1}{2}}(S^1,\mathbb{R}^{n})$ is compactly embedded in $L^2(S^1,\mathbb{R}^n)$.
Nevertheless the following simple example, which serves as a prototype for the situation we will encounter in a more general case, shows that the map $x \mapsto \nabla_{\frac{1}{2}} b_H(x)$ is in general not compact if $\mathbb{H}$ is infinite dimensional.
\begin{ese}
\label{esempio}
Let us consider the quadratic time-independent Hamiltonian $H:\mathbb{H}\rightarrow \mathbb{R}$ defined as $H(y)=\mu \| y\| ^2$, with $\mu \in \mathbb{R}$.
We have that 
\begin{align*}
b_H(x)= \int_0 ^1 \mu \| x(t) \|^2 dt= \mu \langle x,x \rangle_{L^2}.
\end{align*}
By representing $x$ as a Fourier series we get
\begin{align*}
\langle x,x \rangle_{L^2}= \sum_{k\in \mathbb{Z}}  \langle x^k ,x^k \rangle,
\end{align*}
and 
\begin{align*}
 \langle x,x \rangle_{\frac{1}{2}} = \langle x^0,x^0\rangle + 2 \pi \sum_{k\neq 0} |k|  \langle x^k ,x^k \rangle.
\end{align*}
The $k$-th Fourier coefficient of $\nabla_{L^2} b_H(x)= \nabla H(x) $ is
\begin{align*}
\nabla H(x)^k= 2 \mu x^k,
\end{align*}
while the Fourier coefficients of $\nabla_{\frac{1}{2}} b_H(x)$ are
\begin{align*}
&\nabla_{\frac{1}{2}} b_H(x) ^0=2 \mu x^0,\\
&\nabla_{\frac{1}{2}} b_H(x) ^k = \dfrac{2 \mu}{2 \pi |k|} x^k \textrm{ for } k \neq 0,
\end{align*}
thus the map $x \mapsto \nabla_{\frac{1}{2}} b_H(x)$ is not compact as long as $\mu \neq 0$ because, for example, it sends any bounded non precompact set of constant curves into a non precompact sets of curves of $E$.
\end{ese}
We conclude the section by reviewing some general and well known results in critical point theory.\\
Let $\mathbb{H}$ be a separable Hilbert space and  $f\in C^1 (\mathbb{H},\mathbb{R)}$. 
\begin{defin}
A Palais-Smale sequence for $f$ at level $c\in \mathbb{R}$, abbreviated as (PS)$^c$, is a sequence $\{ x_n\}_{n \in \mathbb{N}}$ of elements of $\mathbb{H}$ such that $\nabla f(x_n) \underset{n \rightarrow + \infty} \longrightarrow 0$ in $\mathbb{H}$ and $f(x_n) \underset{n \rightarrow + \infty} \longrightarrow c$.
The map $f$ satisfies the \textit{Palais-Smale} condition at level $c$ if any (PS)$^c$ sequence of $f$ admits a convergent subsequence.
\end{defin} 
Let $\mathcal{F}$ be a family of subsets of $\mathbb{H}$, we define the minimax value $c(f,\mathcal{F})$ belonging to $f$ and $\mathcal{F}$ as
\begin{align*}
c(f,\mathcal{F}):= \inf_{F \in \mathcal{F}} \sup _{x \in F} f(x) \in \mathbb{R} \cup \{ - \infty \} \cup \{ + \infty \}.
\end{align*}
\begin{teo}[Minimax lemma]
\label{m} 
Assume $f$ and $\mathcal{F}$ meet the following conditions:
\begin{enumerate}[1)]
\item $\dot{x}= - \nabla f (x)$ defines a global flow $\varphi_t (x)$,
\item the family $\mathcal{F}$ is positively invariant under the flow, i.e., if $F \in \mathcal{F}$ then $\varphi_t (F) \in \mathcal{F}$ for every $t \geq 0$,
\item $-\infty< c(f,\mathcal{F}) < \infty$,
\end{enumerate}
then there exists a \emph{(PS)} sequence for $f$ at level $c(f,\mathcal{F})$. If in addition 
\begin{enumerate}[]
\item 4) $f$ satisfies \emph{(PS)} at level $c(f,\mathcal{F})$,
\end{enumerate}
then $f$ has a critical point with critical value  $c(f,\mathcal{F})$.
\end{teo}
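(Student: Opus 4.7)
The plan is to prove the first assertion by contradiction via the standard deformation argument using the negative gradient flow, then deduce the second assertion from the (PS) hypothesis.

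First, set $c := c(f,\mathcal{F})$ and suppose for contradiction that no (PS) sequence at level $c$ exists. Then there are constants $\varepsilon_0 > 0$ and $\delta > 0$ such that $\|\nabla f(x)\| \geq \delta$ for every $x \in \mathbb{H}$ with $|f(x) - c| \leq \varepsilon_0$; otherwise one could produce the forbidden sequence. I would then pick $\varepsilon \in (0, \varepsilon_0)$ to be chosen later, and use the definition of $c$ as an infimum to select an $F \in \mathcal{F}$ with $\sup_{x \in F} f(x) < c + \varepsilon$.

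The key computation is the energy identity along orbits of the flow $\varphi_t$ from hypothesis (1): $\frac{d}{dt} f(\varphi_t(x)) = -\|\nabla f(\varphi_t(x))\|^2$. As long as the orbit stays inside the strip $\{|f - c| \leq \varepsilon_0\}$, its $f$-value decreases at rate at least $\delta^2$. Thus, choosing $T$ such that $\delta^2 T \geq 2\varepsilon_0$, any point starting in $F$ either leaves the strip downward (and then stays below $c - \varepsilon_0$ by monotonicity of $f \circ \varphi_t$) or remains inside the strip and loses more than $2\varepsilon_0$ in energy. In either case, taking $\varepsilon < \varepsilon_0$, I obtain $\sup_{y \in \varphi_T(F)} f(y) \leq c - \varepsilon_0 < c$. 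By hypothesis (2), $\varphi_T(F) \in \mathcal{F}$, contradicting the definition of $c$ as the infimum over $\mathcal{F}$. This contradiction yields a (PS) sequence at level $c$.

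For the second assertion, assuming hypothesis (4), the (PS) sequence just produced admits a convergent subsequence $x_{n_k} \to x^\ast$. By continuity of $f$ and $\nabla f$ (the latter follows from $f \in C^1$ together with the flow hypothesis), $f(x^\ast) = c$ and $\nabla f(x^\ast) = 0$, so $x^\ast$ is a critical point with critical value $c(f,\mathcal{F})$.

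The only nontrivial point in the argument is the quantitative deformation step: one must be slightly careful that the bound $\|\nabla f\| \geq \delta$ applies only inside the strip, so the argument must simultaneously track points that leave the strip (which stay below thereafter) and points that remain inside (which drop uniformly). This is the step where hypotheses (1)--(3) interact, and it is the main bookkeeping obstacle; everything else is either a definition unpacked or a direct use of continuity.
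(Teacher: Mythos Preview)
Your argument is correct: the standard quantitative deformation via the negative gradient flow is exactly what is needed, and your bookkeeping for orbits inside versus outside the strip $\{|f-c|\leq\varepsilon_0\}$ is accurate (note that orbits cannot exit the strip upward since $f$ is nonincreasing along the flow, and orbits starting below the strip are already fine). The paper itself does not give a proof of this theorem but simply cites \cite{HZ94}, Section~3.2; the argument you supply is essentially the one found there, so there is no meaningful difference in approach to compare.
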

\proof
See \cite{HZ94}, Section 3.2. 
\endproof

\section{Palais-Smale sequences of the Hamiltonian action functional}
\label{sec2}
In this section we study the properties of (PS)$^c$ sequences of the Hamiltonian action functional $\mathcal{A}_H:E \rightarrow \mathbb{R}$ under the assumption that the map
\begin{align*}
\mathbb{H} &\rightarrow \mathbb{H}\\
x &\mapsto \nabla H(x)
\end{align*}
is globally Lipschitz continuous.
The first result we obtain is that symplectomorphisms satisfying reasonable boundedness conditions induce a correspondence between bounded (PS)$^c$ sequences of the Hamiltonian action functional.
\begin{prop}
Let $\varphi:\mathbb{H}\rightarrow \mathbb{H}$ be a symplectic diffeomorphism such that $\varphi$ and $\varphi^{-1}$ have bounded differentials up to the second order, let $H \in C^{\infty}(\mathbb{H},\mathbb{R})$ be a Hamiltonian whose gradient is globally Lipschitz continuous and consider the Hamiltonian $G=H \circ \varphi^{-1}$.\\
Let $s > \frac{1}{2}$ be a real number, if $\{x_n\}_{n \in \mathbb{N}}\in E$ is a $H^s$-bounded \emph{(PS)}$^c$ sequence for $\mathcal{A}_H$ then $\{ \varphi_*(x_n)\}_{n \in \mathbb{N}}$ is a \emph{(PS)}$^c$ sequence for $\mathcal{A}_G$.
\label{PSinv}
\end{prop}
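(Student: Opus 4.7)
My plan is to exploit the invariance of the Hamiltonian action functional under symplectic conjugation on loops, and then to transfer the vanishing of the gradient from $x_n$ to $\varphi_*(x_n)$ through a chain-rule identity whose key ingredient is a Sobolev multiplier estimate. Throughout, view $\varphi_*: E \to E$ as the composition operator $\varphi_*(x)(t) := \varphi(x(t))$; boundedness of $D\varphi$ together with the Slobodeckij expression for $|\cdot|_{1/2}$ shows that $\varphi_*$ maps $E$ into $E$, with differential $D(\varphi_*)(x)v$ given pointwise by $t \mapsto D\varphi(x(t))v(t)$.

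The first step is to verify the identity $\mathcal{A}_G(\varphi_*(x)) = \mathcal{A}_H(x)$ for every $x \in E$. The Hamiltonian parts are equal because $G = H \circ \varphi^{-1}$. For the symplectic parts, the Liouville primitive $\alpha_p(v) := \tfrac{1}{2}\omega(p,v)$ satisfies $d\alpha = \omega$, and since $\varphi^*\omega = \omega$ and $\mathbb{H}$ is contractible one has $\varphi^*\alpha - \alpha = dF$ for some smooth $F: \mathbb{H} \to \mathbb{R}$; pulling back along a loop annihilates this exact term. Consequently $\mathcal{A}_G(\varphi_*(x_n)) = \mathcal{A}_H(x_n) \to c$, which takes care of the level condition in the definition of a (PS)$^c$-sequence.

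Differentiating the same identity yields $d\mathcal{A}_G(\varphi_*(x)) = d\mathcal{A}_H(x) \circ D(\varphi_*)(x)^{-1}$, where $D(\varphi_*)(x)^{-1}$ is pointwise multiplication by $A_x(t) := D\varphi(x(t))^{-1}$. Passing to dual norms,
\[
\|d\mathcal{A}_G(\varphi_*(x_n))\|_{E^*} \;\leq\; \|d\mathcal{A}_H(x_n)\|_{E^*} \cdot \|D(\varphi_*)(x_n)^{-1}\|_{E \to E},
\]
so, since the first factor tends to zero by assumption and $\|\nabla_{1/2}\mathcal{A}_G\|_{1/2} = \|d\mathcal{A}_G\|_{E^*}$ by Riesz, everything reduces to showing that the multiplier norms $\|D(\varphi_*)(x_n)^{-1}\|_{E \to E}$ remain bounded in $n$.

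This is the point where the $H^s$-boundedness with $s > \tfrac{1}{2}$ is essential. The map $y \mapsto D\varphi(y)^{-1}$ is globally Lipschitz on $\mathbb{H}$, thanks to the identity $D\varphi(y)^{-1} - D\varphi(z)^{-1} = D\varphi(y)^{-1}(D\varphi(z) - D\varphi(y))D\varphi(z)^{-1}$ combined with the uniform bounds on $D\varphi^{-1}$ and $D^2\varphi$; composing with $x_n$ then makes $\{A_{x_n}\}$ bounded in $H^s(S^1)$. For $s > \tfrac{1}{2}$ the space $H^s(S^1)$ is a Banach algebra and acts by pointwise multiplication as a bounded operator on $H^{1/2}(S^1,\mathbb{H})$, which gives $|A_{x_n} w|_{1/2} \leq C\|A_{x_n}\|_{H^s}|w|_{1/2} \leq C'|w|_{1/2}$ uniformly in $n$ and closes the argument. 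The main technical obstacle is precisely this multiplier estimate and the strict inequality $s > \tfrac{1}{2}$: at the endpoint $s = \tfrac{1}{2}$ the algebra property fails and the estimate breaks down, which is exactly why the proposition assumes an $H^s$-bound above the critical exponent rather than a mere $H^{1/2}$-bound.
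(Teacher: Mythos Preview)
Your argument follows the same overall strategy as the paper: establish the action-invariance $\mathcal{A}_G(\varphi_*(x))=\mathcal{A}_H(x)$, differentiate it, and then bound the operator norm of $D(\varphi_*)(x_n)^{-1}$ on $E$. The one substantive difference lies in how that multiplier bound is obtained. The paper proves it (Lemma~\ref{lemreg}) by passing to the harmonic extension on the disk, estimating in $H^1(\mathbb{D})$ via H\"older and Sobolev embeddings, and then pulling back with the trace theorem; you instead appeal directly to the Sobolev product estimate $H^s\cdot H^{1/2}\hookrightarrow H^{1/2}$ for $s>\tfrac12$ in one space dimension, applied to the operator-valued symbol $A_{x_n}$. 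Both routes are legitimate; yours is shorter if one is willing to quote the multiplier theorem as known, while the paper's is more self-contained. Two small points deserve tightening: first, the identity $\mathcal{A}_G(\varphi_*(x))=\mathcal{A}_H(x)$ is literally justified by your exact-form computation only for $H^1$ (absolutely continuous) loops, and you should make explicit the passage to all of $E$ by density together with the continuity of $\varphi_*$ on $E$, as the paper does; second, the phrase ``$H^s$ is a Banach algebra'' refers to $H^s\cdot H^s\subset H^s$, whereas what you actually need and use is the stronger multiplier statement $H^s\cdot H^{1/2}\subset H^{1/2}$ (valid for $s>\tfrac12$), so it would be cleaner to cite that product rule directly.
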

We remark that the assumption on the $H^s$-boundedness of (PS)$^c$ sequences with $s > \frac{1}{2}$ is not really restrictive, indeed in the final part of the section we will see that in our setting any $H^\frac{1}{2}$-bounded (PS)$^c$ sequence can be modified into a $H^s$-bounded (PS)$^c$ sequence for any $s < \frac{3}{2}$.\\ \\
Let $\varphi: \mathbb{H} \rightarrow \mathbb{H}$ be any symplectomorphism and consider the composition operator (known in the literature, in a more general context, as superposition operator) defined as 
\begin{align*}
\varphi_*: &E \rightarrow Im(\varphi_*)\\
&u \mapsto \varphi \circ u.
\end{align*}
The differential of $\varphi_*$ at a point $x \in E$, if it exists, is given by
\begin{align*}
\big( D \varphi_*(x)[u] \big)(t) &= \big( \dfrac{d}{ds}{\big{|}_{0}} \varphi_*(x + su)\big)(t) = \big( \dfrac{d}{ds}{\big{|}_{0}} \varphi(x(t) + su(t))\big) \\
&= D \varphi (x(t))[u(t)], \ \ \forall u \in E.
\end{align*}
We define the formal differential of $\varphi_*$ as the multiplication operator 
\begin{align*}
D \varphi_* (x)[u]=D \varphi (x)[u].
\end{align*}
\begin{lem}
Let $\varphi: \mathbb{H} \rightarrow \mathbb{H}$ be a diffeomorphism with bounded differentials up to the second order, then
\begin{align*}
\varphi_*: H^{\frac{1}{2}}(S^1,\mathbb{H}) \rightarrow H^{\frac{1}{2}}(S^1,\mathbb{H})
\end{align*}
is a continuous map.\\
Moreover for any $s>\frac{1}{2}$ there is a constant $C_s$ such that for every $x \in H^{s}(S^1,\mathbb{H})$ the operator $D \varphi_*(x)$ is bounded on $H^{\frac{1}{2}}(S^1,\mathbb{H})$, i.e.
\begin{align*}
|D \varphi_* (x)|_{\mathcal{L}(E,E)} \leq C_s (|x|_{H^{s}(S^1,\mathbb{H})} +1).
\end{align*}
\label{lemreg}
\end{lem}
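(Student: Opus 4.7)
The plan is to work throughout with the Slobodeckij semi-norm on $E = H^{\frac{1}{2}}(S^1, \mathbb{H})$ introduced in Section~\ref{sec1}, which is equivalent to the Fourier-based norm and turns both assertions into manipulations of pointwise integrands of the form $\|f(t)-f(r)\|/|t-r|$. The two standing hypotheses translate into the global Lipschitz bounds $\|\varphi(a)-\varphi(b)\| \le M_1 \|a-b\|$ and $\|D\varphi(a)-D\varphi(b)\| \le M_2 \|a-b\|$, and these two constants are the only quantitative information about $\varphi$ I will actually use.

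For the continuity of $\varphi_*$, the $L^2$-part of $|\varphi_*(x) - \varphi_*(y)|'_{\frac{1}{2}}$ is immediate from $M_1$. For the Slobodeckij part, given $x_n \to x$ in $E$, I would extract a subsequence for which $x_n \to x$ a.e.\ and consider
\[
F_n(t,r) := \frac{\varphi(x_n(t)) - \varphi(x_n(r)) - \varphi(x(t)) + \varphi(x(r))}{|t-r|}.
\]
Then $F_n \to 0$ a.e.\ and $\|F_n\| \le G_n(t,r) := M_1 \, \big(\|x_n(t)-x_n(r)\| + \|x(t)-x(r)\|\big)/|t-r|$, a sequence in $L^2(S^1 \times S^1)$ whose $L^2$-norms converge, since $|x_n|_{sem, \frac{1}{2}} \to |x|_{sem, \frac{1}{2}}$ by convergence in $E$. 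Pointwise a.e.\ convergence of $G_n$ together with convergence of its $L^2$-norms then yields $F_n \to 0$ in $L^2$ via a generalised dominated convergence argument, giving $|\varphi_*(x_n) - \varphi_*(x)|_{sem, \frac{1}{2}} \to 0$.

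For the bound on $|D\varphi_*(x)|_{\mathcal{L}(E,E)}$, the $L^2$ contribution again follows directly from $M_1$. For the semi-norm I would use the algebraic decomposition
\[
D\varphi(x(t))[u(t)] - D\varphi(x(r))[u(r)] = D\varphi(x(t))[u(t)-u(r)] + \big(D\varphi(x(t)) - D\varphi(x(r))\big) u(r).
\]
The first summand contributes at most $M_1 |u|_{sem, \frac{1}{2}}$; the second, via $\|D\varphi(x(t)) - D\varphi(x(r))\| \le M_2 \|x(t)-x(r)\|$, reduces everything to estimating the bilinear integral
\[
I(x,u) := \int_{S^1}\int_{S^1} \frac{\|x(t)-x(r)\|^2 \|u(r)\|^2}{|t-r|^2}\, dt\, dr
\]
from above by $C_s (1 + |x|_{H^s}^2) |u|_{H^{\frac{1}{2}}}^2$ for every $s > \frac{1}{2}$.

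This last bound is where I expect the main obstacle to lie. The naive approach, using the Morrey-type embedding $H^s \hookrightarrow C^{s-\frac{1}{2}}$ and the pointwise estimate $\|x(t)-x(r)\|^2 \le C_s |x|_{H^s}^2 |t-r|^{2s-1}$, only yields a convergent integral for $s > 1$, since only then is $|t-r|^{2s-3}$ integrable across the diagonal. For the full range $s > \frac{1}{2}$ one must exploit the $H^s$-information on $x$ in integrated form. The natural route is to rewrite the weight as $|t-r|^{-2} = |t-r|^{-(1+2s)} \cdot |t-r|^{2s-1}$, bound the second factor uniformly on $S^1$ using $s > \frac{1}{2}$, and then distribute the remaining weight via H\"older's inequality between $|x|_{sem, s}^2$ and a higher $L^p$-norm of $u$, making crucial use of the continuous embedding of $H^{\frac{1}{2}}(S^1, \mathbb{H})$ into $L^p(S^1, \mathbb{H})$ for every finite $p$. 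This is equivalent to the classical one-dimensional Sobolev multiplier estimate $|au|_{H^{\frac{1}{2}}} \le C \|a\|_{H^s} |u|_{H^{\frac{1}{2}}}$ for $s > \frac{1}{2}$, which I would apply to $a(t) = D\varphi(x(t))$; the Lipschitz character of $D\varphi$ gives $\|a\|_{H^s} \le C(1 + |x|_{H^s})$ directly from the Slobodeckij definition, which closes the estimate.
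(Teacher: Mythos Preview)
Your approach is correct and genuinely different from the paper's. The paper does not work with the Slobodeckij seminorm on $S^1$ beyond showing that $\varphi_*$ is well defined; instead it passes to the disk via the harmonic extension $R: H^{1/2}(S^1,\mathbb{H}) \to H^1(\mathbb{D},\mathbb{H})$ and its left inverse, the trace $\partial$. Continuity of $\varphi_*$ on $H^{1/2}(S^1)$ is then deduced from continuity of $\varphi_*$ on $H^1(\mathbb{D})$, which is elementary (chain rule plus dominated convergence along a.e.\ convergent subsequences). For the differential bound the paper again extends to the disk, expands $|D\varphi_*(\tilde u)[\tilde v]|_{H^1(\mathbb{D})}$ via the chain rule, and controls the dangerous term $D^2\varphi(\tilde u)[\nabla\tilde u,\tilde v]$ by H\"older in $L^p(\mathbb{D}) \times L^q(\mathbb{D})$ together with the two-dimensional Sobolev embedding $H^t(\mathbb{D}) \hookrightarrow W^{1,p}(\mathbb{D})$; this is how the constraint $s > \tfrac{1}{2}$ enters there, via the trace relation $H^t(\mathbb{D}) \to H^{t-1/2}(S^1)$.

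Your route stays on $S^1$ and is conceptually cleaner: it trades the harmonic-extension machinery for the one-dimensional product estimate $\|au\|_{H^{1/2}} \le C\|a\|_{H^s}\|u\|_{H^{1/2}}$, $s > \tfrac{1}{2}$, applied to the operator-valued multiplier $a = D\varphi\circ x$. Two small points deserve tightening. First, in the continuity step you extract a single a.e.-convergent subsequence; to conclude that the full sequence $\varphi_*(x_n)$ converges you should remark that the argument applies to every subsequence (the usual sub-subsequence trick). Second, your ``natural route'' via direct H\"older on $I(x,u)$ does not close as written: after bounding $|t-r|^{2s-1}$ and integrating in $t$ you are left with $\int \|u(r)\|^2 \Psi(r)\,dr$ where $\Psi(r) = \int \|x(t)-x(r)\|^2 |t-r|^{-(1+2s)}\,dt$ is a priori only in $L^1$, which is not enough to pair against $\|u\|_{L^p}^2$. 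This is not a real gap, since you then correctly identify the needed bound as an instance of the Sobolev multiplier lemma and invoke it; just be aware that this lemma is doing genuine work here (its standard proof is via Fourier or paraproducts rather than direct Slobodeckij manipulation), and that you are using it for operator-valued $a$ and Hilbert-valued $u$, which the scalar proof accommodates without change.
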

\proof
The map $\varphi$ is globally Lipschitz thus $\varphi_* :L^2(S^1,\mathbb{H}) \rightarrow L^2(S^1,\mathbb{H})$ is globally Lipschitz; moreover it is possible to find a constant $K$ such that
\begin{align*}
| \varphi(x) |^2 _{sem, \frac{1}{2}} &= \int_0^1 \int_0^1 \dfrac{|\varphi(x(t))-\varphi(x(r))|^2}{|t-r|^2}dtdr \leq  \int_0^1 \int_0^1 K \dfrac{|x(t)-x(r)|^2}{|t-r|^2}dtdr \\
&=K \int_0^1\int_0^1 \dfrac{|x(t)-x(r)|^2}{|t-r|^2}dtdr\\
&= K | x |^2 _{sem,\frac{1}{2}}
\end{align*}
hence $\varphi_*: E \rightarrow E$ is well defined.\\
Our next step is to prove the continuity of $\varphi_* : H^1(\mathbb{D},\mathbb{H}) \rightarrow H^1(\mathbb{D},\mathbb{H})$, with $\mathbb{D}$ disk whose boundary is $S^1$.\\
The diffeomorphism $\varphi$ is $C^1$ and has bounded differential, thus the chain rule for Sobolev spaces $W^{1,p}(\mathbb{D},\mathbb{H})$ (which can be deduced using the standard chain rule for smooth maps and the dominated convergence theorem) implies that if $u \in H^1(\mathbb{D},\mathbb{H})$ then
\begin{align*}
 \left\lbrace \begin{array}{l}
\varphi_* (u)  \in H^1(\mathbb{D},\mathbb{H}) \\ 
\nabla (\varphi_*  (u))= D \varphi_* (u)  [ \nabla u].\\
   \end{array} \right.
\end{align*} 
Let us consider a sequence $\{ u_n\}_{n \in \mathbb{N}} \in H^1(\mathbb{D},\mathbb{H})$ such that $u_n  \underset{n \rightarrow + \infty}  \longrightarrow u$ in $H^1(\mathbb{D},\mathbb{H})$.
Since $u_n  \underset{n \rightarrow + \infty} \longrightarrow u$ in $H^1(\mathbb{D},\mathbb{H})$, up to the choice of a subsequence we have pointwise convergence almost everywhere, namely
\begin{align*}
 \left\lbrace \begin{array}{l}
 \ u_{n_k} (x) \  \underset{n_k \rightarrow + \infty} \longrightarrow u(x) \ \ \ \forall x \in \mathbb{D} \backslash \Sigma\\
\nabla u_{n_k} (x)  \underset{n_k \rightarrow + \infty} \longrightarrow \nabla u(x) \ \ \ \forall x \in \mathbb{D} \backslash \Sigma\\
   \end{array} \right.
\end{align*} 
 with $\Sigma$ null-subset of $\mathbb{D}$.\\
By the Lipschitz continuity of $\varphi$ and the dominated convergence theorem  it follows that
\begin{align*}
\varphi_* (u_n) \underset{n \rightarrow + \infty} \longrightarrow \varphi_* (u) \ \ \textrm{ in } L^2(\mathbb{D},\mathbb{H}).
\end{align*}
Using the continuity and boundedness assumption on the differential of $\varphi$ we deduce that 
\begin{align*}
D \varphi( u_{n_k} (x))[ \nabla u_{n_k}(x)] \underset{n_k \rightarrow + \infty} \longrightarrow D \varphi (u(x))[ \nabla u(x)] \ \ \ \forall x \in \mathbb{D} \backslash \Sigma.
\end{align*}
By the dominate convergence theorem we get 
\begin{align*}
D \varphi_*( u_{n_k})[ \nabla u_{n_k}] \underset{n_k \rightarrow + \infty} \longrightarrow D \varphi_* (u)[ \nabla u] \ \ \textrm{ in } L^2(\mathbb{D},\mathbb{H}),
\end{align*}
 and using chain rule it follows
\begin{align*}
\nabla(\varphi_* (u_{n_k})) \underset{n_k \rightarrow + \infty} \longrightarrow \nabla(\varphi_*  (u)) \ \ \textrm{ in } L^2(\mathbb{D},\mathbb{H}),
\end{align*}
hence the desired continuity of $\varphi_*$ in $H^1(\mathbb{D},\mathbb{H})$.\\
To recover the continuity of $\varphi_*$ on $H^{\frac{1}{2}}(S^1,\mathbb{H})$ we need to do a small digression.
It is known that given any element of $H^{\frac{1}{2}}(S^1,\mathbb{H})$ there is a unique extension of $u$ to an element $\tilde{u}$ of $H^1(\mathbb{D},\mathbb{H})$ such that $\partial \tilde{u}= u$ and $\Delta \tilde{u}=0$ in the interior of $\mathbb{D}$. The map
\begin{align*}
R: H^{\frac{1}{2}}(S^1,\mathbb{H}) &\rightarrow H^1(\mathbb{D},\mathbb{H}) \\
u &\mapsto \tilde{u} 
\end{align*}
is an isometry called harmonic extension and it is a right inverse of the trace operator
\begin{align*}
\partial: H^1(\mathbb{D},\mathbb{H}) &\rightarrow H^{\frac{1}{2}}(S^1,\mathbb{H})  \\
u &\mapsto u_{|\partial\mathbb{D}}.
\end{align*}
Both these linear maps are continuous.\\
Let us take a converging sequence $u_n  \underset{n \rightarrow + \infty} \longrightarrow u$ in $H^{\frac{1}{2}}(S^1,\mathbb{H})$.\\
It follows that
\begin{align*}
|\tilde{u}_n - \tilde{u}|_{H^1(\mathbb{D})} = |R(u_n - u)|_{H^1(\mathbb{D})} \leq C_1|u_n - u|_{H^{\frac{1}{2}}(S^1)} \underset{n \rightarrow + \infty} \longrightarrow 0,
\end{align*} 
and the trace theorem along with the continuity of $\varphi_* : H^1(\mathbb{D},\mathbb{H}) \rightarrow H^1(\mathbb{D},\mathbb{H})$ imply
\begin{align*}
|\varphi_* (u_n) - \varphi_*(u)|_{H^{\frac{1}{2}}(S^1)}  = |\partial \big(\varphi_* (\tilde{u}_n) - \varphi_* (\tilde{u})\big)|_{H^{\frac{1}{2}}(S^1)} \leq C_2 |\varphi_* (\tilde{u}_n) - \varphi_* (\tilde{u})|_{H^1(\mathbb{D})} \underset{n \rightarrow + \infty} \longrightarrow 0,
\end{align*} 
hence the continuity of $\varphi_* :E \rightarrow E$.\\
Our next goal is to prove the existence of the differential of $\varphi_*: H^{\frac{1}{2}}(S^1,\mathbb{H}) \rightarrow H^{\frac{1}{2}}(S^1,\mathbb{H})$ at points belonging to $H^{s}(S^1,\mathbb{H})$ with $s > \frac{1}{2}$.\\
Using the fact that the harmonic extension of a loop $u$ minimizes the Dirichlet integral among all functions with trace $u$, together with the boundedness assumptions on $D \varphi$ and on $D^2 \varphi$ we compute
\begin{align*}
|D \varphi_* (u)[v]|_{H^{\frac{1}{2}}(S^1)} & \leq C_1 |R D \varphi_* (u)[v]|_{H^1(\mathbb{D})}\\
&\leq C_1 |D \varphi_* (\tilde{u})[\tilde{v}]|_{H^1(\mathbb{D})}\\
&= C_1(|D \varphi_* (\tilde{u})[\tilde{v}]|_{L^2(\mathbb{D})}  + |D(D \varphi_* (\tilde{u})[\tilde{v}])|_{L^2(\mathbb{D})})\\
&\leq C_1(C_2 |\tilde{v}|_{L^2(\mathbb{D})} +  |D^2 \varphi (\tilde{u})[\nabla \tilde{u}, \tilde{v}]|_{L^2(\mathbb{D})} + |D \varphi (\tilde{u})[\nabla \tilde{v}]|_{L^2(\mathbb{D})}) \\
&\leq C_1(C_2 |\tilde{v}|_{L^2(\mathbb{D})} + C_3 |\nabla \tilde{u}|_{L^p(\mathbb{D})}  |\tilde{v}|_{L^q(\mathbb{D})} +  C_3|\nabla \tilde{v}|_{L^2(\mathbb{D})})
\end{align*}
where by the H\"older's inequality the last estimate holds for $p>2$ arbitrary and $q$ equal to twice the conjugate exponent of $\frac{p}{2}$.\\
Since $|\tilde{v}|_{L^2(\mathbb{D})}$, $|\tilde{v}|_{L^q(\mathbb{D})}$ and $|\nabla \tilde{v}|_{L^2(\mathbb{D})}$ can be bounded by the quantity $|\tilde{v}|_{H^1(\mathbb{D})}$, which is bounded by the value $|v|_{H^\frac{1}{2}(S^1)}$ multiplied with a constant, we infer that
\begin{align}
\label{a3}
|D \varphi_* (u)[v]|_{H^{\frac{1}{2}}(S^1)} \leq C (|\nabla \tilde{u}|_{L^p(\mathbb{D})} +1 )|v|_{H^\frac{1}{2}(S^1)}.
\end{align} 
If we choose $t \in (1,2)$ and we take $p= 1 + \frac{t}{2-t} >2$, then the Sobolev's embedding theorem implies that $H^t(\mathbb{D},\mathbb{H})$ embeds continuously into $W^{1,p}(\mathbb{D},\mathbb{H})$, therefore
\begin{align*}
|D \varphi_* (u)[v]|_{H^{\frac{1}{2}}(S^1)} &\leq C_t (|\nabla \tilde{u}|_{H^t(\mathbb{D})} +1 )|v|_{H^\frac{1}{2}(S^1)}\\
& \leq  C_t (|\nabla u|_{H^{t-\frac{1}{2}}(S^1)} +1 )|v|_{H^\frac{1}{2}(S^1)},
\end{align*}
where the last inequality follows since the harmonic extension is a bounded right inverse of the trace operator $\partial : H^t(\mathbb{D},\mathbb{H}) \rightarrow H^{t-\frac{1}{2}}(S^1,\mathbb{H})$.
\endproof
We are now ready to prove Proposition \ref{PSinv}.
\proof
The elements of $H^1(S^1,\mathbb{H})$ are absolutely continuous loops, thus their action is preserved by symplectomorphisms between simply connected domains. Indeed if $x$ is an absolutely continuous loop then
\begin{align*}
-\frac{1}{2}\int_0 ^1 \langle J (\varphi \circ x)'(t), \varphi(x(t)) \rangle dt &= \int_{S^1} x^* (\varphi^* \lambda) = \int_{S^1} x^* (\lambda + dh)= \int_{S^1} x^* (\lambda)\\
&=-\frac{1}{2}\int_0 ^1 \langle J x'(t), x(t)\rangle dt,
\end{align*}
where $\varphi^* \lambda- \lambda$ is exact and therefore it is the differential of a smooth function $h$.
The Hamiltonian part of the action functional does not change, since by definition
\begin{align*}
\int_0 ^1 G(\varphi (x(t)))dt =\int_0 ^1 H(x(t))dt,
\end{align*}
thus we conclude that 
\begin{align}
\label{f}
\mathcal{A}_G(\varphi_*(x)) = \mathcal{A}_H(x).
\end{align}
The density of $H^1(S^1,\mathbb{H}) \hookrightarrow H^\frac{1}{2}(S^1,\mathbb{H})$ together with the continuity of the map $\varphi_*:E \rightarrow E$ and of $\mathcal{A}_H$ imply that the equality \eqref{f} holds for any loop $x \in E$. In particular, given any (PS)$^c$ sequence $\{x_n\}_{n \in \mathbb{N}}\in E$ we have
\begin{align*}
\mathcal{A}_G(\varphi_*(x_n)) = \mathcal{A}_H(x_n),
\end{align*}
for any $n$. Let $\{x_n\}_{n \in \mathbb{N}}$ be a $H^s$-bounded (PS)$^c$ sequence with $s> \frac{1}{2}$; if we differentiate the equation above, in view of Lemma \ref{lemreg} we can write
\begin{align*}
d \mathcal{A}_G (\varphi_* (x_n)) = d \mathcal{A}_H (x_n) D \varphi_* ^{-1} (\varphi_*(x_n)),
\end{align*}
thus
\begin{align*}
\nabla_{\frac{1}{2}} \mathcal{A}_G (\varphi_* (x_n)) =  D \varphi_* ^{-T} (\varphi_*(x_n)) \nabla_{\frac{1}{2}}  \mathcal{A}_H (x_n).
\end{align*}
Lemma \ref{lemreg} also implies the boundedness of 
\begin{align*}
D \varphi_* ^{-1}: x \mapsto \mathcal{L}(E,E),
\end{align*}
thus we can find a constant $c$ such that
\begin{align*}
|\nabla_{\frac{1}{2}} \mathcal{A}_G (\varphi_* (x_n))|_{\frac{1}{2}} \leq | D \varphi_* ^{-T} (\varphi_*(x_n))|_{\mathcal{L}(E,E)} | \nabla_{\frac{1}{2}}  \mathcal{A}_H (x_n)|_{\frac{1}{2}}  \leq c | \nabla_{\frac{1}{2}}  \mathcal{A}_H (x_n)|_{\frac{1}{2}}.
\end{align*}
This, combined with equality \eqref{f} imply that if $\{ x_n\}_{n \in \mathbb{N}}$ is an $H^s$-bounded (PS)$^c$ sequence of $\mathcal{A}_H$, then $\{ \varphi_*(x_n)\}_{n \in \mathbb{N}}$ is a (PS)$^c$ sequence of $\mathcal{A}_G$.
\endproof
The next two results serve to approximate bounded (PS) sequences with equivalent (PS) sequences fulfilling stricter boundedness conditions (we say that two (PS)$^c$ sequences $\{x_n\}_{n \in \mathbb{N}}$ and $\{y_n\}_{n \in \mathbb{N}}$ are equivalent if $\|x_n - y_n\|  \underset{n \rightarrow + \infty} \longrightarrow 0 $ and they share the same almost critical level $c$).
\begin{lem}
\label{equiva}
Let $H \in C^{\infty}(\mathbb{H},\mathbb{R})$ be a Hamiltonian whose second order differential is bounded.
Let $\frac{1}{2} \leq r \leq 1$ and $\{x_n\}_{n \in \mathbb{N}}$ be a $H^r$-bounded \emph{(PS)}$^c$ sequence of $\mathcal{A}_H:E \rightarrow \mathbb{R}$ such that $\nabla_{\frac{1}{2}} \mathcal{A}_H(x_n)$ is $H^r$-infinitesimal. Then the elements
\begin{align*}
y_n:= x_n - (P^+ - P^-)\nabla_{\frac{1}{2}} \mathcal{A}_H(x_n) 
\end{align*}
define a $H^s$-bounded \emph{(PS)}$^c$ sequence $\{y_n\}_{n \in \mathbb{N}}$ for any $s<r + \frac{1}{2}$, such that
\begin{align*}
|y_n-x_n|_{r}  = o(1).
\end{align*}
Moreover the following estimate holds
\begin{align*}
|\nabla_{\frac{1}{2}} \mathcal{A}_H(y_n)|_s \leq o(1) + c_s |\nabla H(x_n) - \nabla H(y_n) |_r.
\end{align*}
\end{lem}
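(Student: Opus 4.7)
The plan is to first rewrite $y_n$ in a form that exposes the Picard-iteration structure hidden in its definition. Since $(P^+-P^-)^2 = I - P^0$ and $\nabla_{\frac{1}{2}}\mathcal{A}_H(x) = (P^+-P^-)x - \nabla_{\frac{1}{2}}b_H(x)$, direct algebra gives
\begin{align*}
y_n = P^0 x_n + (P^+-P^-)\nabla_{\frac{1}{2}}b_H(x_n),
\end{align*}
which is exactly one iteration of the natural fixed-point map associated to the Euler-Lagrange equation $\nabla_{\frac{1}{2}}\mathcal{A}_H(x) = 0$. Since $P^+ - P^-$ acts as an isometry on the non-zero Fourier modes and kills the zero mode, it has operator norm at most $1$ on every $H^r$, so the estimate $|y_n - x_n|_r \leq |\nabla_{\frac{1}{2}}\mathcal{A}_H(x_n)|_r = o(1)$ is immediate from the hypothesis.

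For the $H^s$-boundedness with $s < r + \frac{1}{2}$, the summand $P^0 x_n$ is a constant loop whose $H^s$-norm equals $\|x_n^0\| \leq |x_n|_r$, so it is uniformly controlled. For the other summand I apply Lemma \ref{growthc} with parameter $r$, obtaining $\|\nabla_{\frac{1}{2}}b_H(x_n)^k\| \leq C(1+|x_n|_r)/|k|^{r+1}$, and hence
\begin{align*}
|(P^+-P^-)\nabla_{\frac{1}{2}}b_H(x_n)|_s^2 \leq C'(1+|x_n|_r)^2 \sum_{k\ne 0}|k|^{-2(r+1-s)},
\end{align*}
and the last series converges exactly when $s<r+\frac{1}{2}$. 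To check that $\{y_n\}$ is still a $(PS)^c$-sequence I invoke Lemma \ref{lemlip}: $\nabla_{\frac{1}{2}}\mathcal{A}_H$ is Lipschitz continuous from $E$ to $E$, so combining $|y_n - x_n|_{\frac{1}{2}} \leq |y_n - x_n|_r = o(1)$ with $|\nabla_{\frac{1}{2}}\mathcal{A}_H(x_n)|_{\frac{1}{2}} \leq |\nabla_{\frac{1}{2}}\mathcal{A}_H(x_n)|_r \to 0$ yields $|\nabla_{\frac{1}{2}}\mathcal{A}_H(y_n)|_{\frac{1}{2}} \to 0$, and continuity of $\mathcal{A}_H$ on $E$ gives $\mathcal{A}_H(y_n) \to c$.

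For the refined estimate, the expression for $y_n$ together with $(P^+-P^-)^2 = I - P^0$ yields
\begin{align*}
\nabla_{\frac{1}{2}}\mathcal{A}_H(y_n) = (I-P^0)\nabla_{\frac{1}{2}}b_H(x_n) - \nabla_{\frac{1}{2}}b_H(y_n),
\end{align*}
which I split into its $P^0$ and $(I-P^0)$ components. The $(I-P^0)$ part coincides with $(I-P^0)[\nabla_{\frac{1}{2}}b_H(x_n) - \nabla_{\frac{1}{2}}b_H(y_n)]$, and the Fourier-mode difference estimate in Lemma \ref{growthc} bounds its $H^s$-norm by a multiple of $|\nabla H(x_n) - \nabla H(y_n)|_r$ whenever $s < r + \frac{1}{2}$. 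The $P^0$ component is $-P^0 \nabla_{\frac{1}{2}}b_H(y_n)$, whose $H^s$-norm equals $\|\nabla H(y_n)^0\|$ by Lemma \ref{lemgrad}, and by the triangle inequality it is dominated by $\|\nabla H(x_n)^0\| + \|\nabla H(x_n) - \nabla H(y_n)\|_{L^2}$. The first summand coincides with $|P^0 \nabla_{\frac{1}{2}}\mathcal{A}_H(x_n)|_r$ and is therefore $o(1)$ by hypothesis, while the second is bounded by $|\nabla H(x_n) - \nabla H(y_n)|_r$ via the continuous embedding $H^r \hookrightarrow L^2$. The only conceptually delicate point in the whole argument is recognising the Picard-iteration structure of $y_n$; once this is spotted, the half-derivative gain encoded in Lemma \ref{growthc} does all the real work.
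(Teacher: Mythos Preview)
Your proof is correct and follows essentially the same route as the paper's. Your identity $y_n = P^0 x_n + (P^+-P^-)\nabla_{\frac{1}{2}}b_H(x_n)$ is exactly the paper's Fourier-coefficient computation $y_n^k = (\operatorname{sgn} k)\,\nabla_{\frac{1}{2}}b_H(x_n)^k$ written globally, and both arguments then feed this into Lemma~\ref{growthc} to obtain the half-derivative gain; your expression $\nabla_{\frac{1}{2}}\mathcal{A}_H(y_n) = (I-P^0)\nabla_{\frac{1}{2}}b_H(x_n) - \nabla_{\frac{1}{2}}b_H(y_n)$ is algebraically identical to the paper's $P^0\nabla_{\frac{1}{2}}\mathcal{A}_H(x_n) + \nabla_{\frac{1}{2}}b_H(x_n) - \nabla_{\frac{1}{2}}b_H(y_n)$ (since $P^0\nabla_{\frac{1}{2}}\mathcal{A}_H(x_n) = -P^0\nabla_{\frac{1}{2}}b_H(x_n)$), with only a minor difference in how the zero mode is bookkept in the final estimate.
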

\proof
Let $R>0$ be an upper bound for $|x_n|_r$. By assumption the sequence
\begin{align}
\label{a1}
\nabla_{\frac{1}{2}} \mathcal{A}_H(x_n) = (P^+ - P^-)x_n - \nabla_{\frac{1}{2}} b_H(x_n)
\end{align}
is infinitesimal in $H^r(S^1,\mathbb{H})$.\\
The sequence given by
\begin{align*}
y_n:= x_n - (P^+ - P^-)\nabla_{\frac{1}{2}} \mathcal{A}_H(x_n) 
\end{align*}
is (PS)$^c$ because $\mathcal{A}_H$ and its gradient are uniformly continuous on bounded sets.\\
For any $k \in \mathbb{Z}$, if we apply the projector $P^k$ to \eqref{a1} we get 
\begin{align*}
P^k \nabla_{\frac{1}{2}} \mathcal{A}_H(x_n) =(\textrm{sgn} k) P^k x_n  -P^k \nabla_{\frac{1}{2}} b_H(x_n),
\end{align*}
thus
\begin{align*}
P^k y_n =P^k x_n - (\textrm{sgn} k)P^k \nabla_{\frac{1}{2}} \mathcal{A}_H(x_n) = (\textrm{sgn} k)P^k \nabla_{\frac{1}{2}} b_H(x_n),
\end{align*}
hence
\begin{align*}
y_n^k = (\textrm{sgn} k) \nabla_{\frac{1}{2}} b_H(x_n) ^k.
\end{align*}
By Lemma \ref{growthc}, for any $k \neq 0$ we have the estimate
\begin{align*}
\|y_n^k\| \leq \frac{c (R+1)}{|k|^{r+1}},
\end{align*}
therefore 
\begin{align*}
|y_n|_{H^s} ^2 = \|y_n ^0\|^2 + 2 \pi \sum_{k \neq 0} |k|^{2s} \|y_n ^k\|^2 \leq R^2 + 2 \pi c^2(R+1)^2 \sum_{k \neq 0} \frac{1}{|k|^{2(r-s+1)}}
\end{align*}
where the series above converges for any $s<r +\frac{1}{2}$, thus we get the desired bound.\\
To prove the second part of the statement we rewrite the $H^\frac{1}{2}$-gradient of $\mathcal{A}_H$ as
\begin{align*}
\nabla_{\frac{1}{2}} \mathcal{A}_H (y_n) &= (P^+ -P^-)(x_n - (P^+ - P^-) \nabla _{\frac{1}{2}} \mathcal{A}_H (x_n)) - \nabla_{\frac{1}{2}} b_H(y_n)\\
&=(P^+ -P^-)x_n - (P^+ + P^-) \nabla _{\frac{1}{2}} \mathcal{A}_H (x_n) - \nabla_{\frac{1}{2}} b_H(y_n)\\
&=(P^+ -P^-)x_n - \nabla _{\frac{1}{2}} \mathcal{A}_H (x_n) +P^0 \nabla _{\frac{1}{2}} \mathcal{A}_H (x_n) - \nabla_{\frac{1}{2}} b_H(y_n)\\
&=(P^+ -P^-)x_n - (P^+ -P^-)x_n  +\nabla_{\frac{1}{2}} b_H(x_n) +P^0 \nabla _{\frac{1}{2}} \mathcal{A}_H (x_n) - \nabla_{\frac{1}{2}} b_H(y_n)\\
&= P^0 \nabla _{\frac{1}{2}} \mathcal{A}_H (x_n) +\nabla_{\frac{1}{2}} b_H(x_n) - \nabla_{\frac{1}{2}} b_H(y_n).
\end{align*}
Using the second estimate of Lemma \ref{growthc} we deduce 
\begin{align*}
|\nabla_{\frac{1}{2}} \mathcal{A}_H (y_n)|_s ^2 &\leq |P^0 \nabla_{\frac{1}{2}} \mathcal{A}_H (x_n)|_s ^2 + 2 \pi \sum_{k \neq 0} |k|^{2s} \|\nabla_\frac{1}{2} b_H(x_n)^k - \nabla_\frac{1}{2} b_H(y_n)^k \|^2 \\ 
&+  \|\nabla_\frac{1}{2} b_H(x_n)^0 - \nabla_\frac{1}{2} b_H(y_n)^0 \|^2\\
&\leq  \|P^0 \nabla_{\frac{1}{2}} \mathcal{A}_H (x_n)\|^2 + c |\nabla H(x_n) - \nabla H (y_n) |_{r} ^2  \sum_{k \neq 0} \dfrac{1}{|k|^{2(r-s+1)}}.
\end{align*}
The first quantity in the last line is infinitesimal since $\{x_n \}_{n \in \mathbb{N}}$ is (PS), and the series appearing in the second quantity converges for any $s<r+ \frac{1}{2}$, thus we get the desired estimate.
\endproof
\begin{lem}
Let $H \in C^{\infty}(\mathbb{H},\mathbb{R})$ be a Hamiltonian whose second and third order differentials are bounded.
Let $\{x_n\}_{n \in \mathbb{N}}$ be a bounded  \emph{(PS)}$^c$ sequence of $\mathcal{A}_H$, then for any $\frac{1}{2} \leq s < \frac{3}{2}$ we can find an equivalent \emph{(PS)}$^c$ sequence $\{y_n\}_{n \in \mathbb{N}}$ which is $H^s$-bounded and such that $\nabla_{\frac{1}{2}} \mathcal{A}_H (y_n)$ is infinitesimal in the $H^s$-norm.
\end{lem}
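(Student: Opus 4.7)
The plan is to apply Lemma \ref{equiva} twice, each time gaining close to half a derivative of regularity. Set $\{x_n^{(0)}\} := \{x_n\}$, which by hypothesis is $H^{\frac{1}{2}}$-bounded with $\nabla_{\frac{1}{2}}\mathcal{A}_H(x_n^{(0)}) \to 0$ in $H^{\frac{1}{2}}$. Fix $s_1 \in (\frac{1}{2}, 1)$. Applying Lemma \ref{equiva} with $r = \frac{1}{2}$ produces a sequence $\{x_n^{(1)}\}$ which is $H^{s_1}$-bounded, still (PS)$^c$, and satisfies $|x_n^{(1)} - x_n^{(0)}|_{\frac{1}{2}} \to 0$. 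The lemma's estimate reads
\begin{align*}
|\nabla_{\frac{1}{2}}\mathcal{A}_H(x_n^{(1)})|_{s_1} \leq o(1) + c_{s_1}|\nabla H(x_n^{(0)}) - \nabla H(x_n^{(1)})|_{\frac{1}{2}},
\end{align*}
and the right-hand side is uniformly bounded in $n$ because $|\nabla H(u)|_{\frac{1}{2}} \leq C(1+|u|_{\frac{1}{2}})$ under the boundedness of $d^2 H$ (the same computation as in the proof of Lemma \ref{growthc}). Combined with the infinitesimality of $\nabla_{\frac{1}{2}}\mathcal{A}_H(x_n^{(1)})$ in $H^{\frac{1}{2}}$ (inherited from (PS)$^c$), an elementary Fourier-coefficient interpolation between $H^{\frac{1}{2}}$ and $H^{s_1}$ yields $\nabla_{\frac{1}{2}}\mathcal{A}_H(x_n^{(1)}) \to 0$ in $H^{s'}$ for every $s' < s_1$.

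I then fix $s' \in (\frac{1}{2}, s_1)$ and apply Lemma \ref{equiva} a second time with $r = s'$: the hypotheses hold since $\{x_n^{(1)}\}$ is $H^{s'}$-bounded and the gradient is $H^{s'}$-infinitesimal. This produces $\{y_n\} := \{x_n^{(2)}\}$, $H^{s_2}$-bounded for every $s_2 < s' + \frac{1}{2}$, with $|x_n^{(2)} - x_n^{(1)}|_{s'} \to 0$. By choosing $s_1, s'$ close enough to $1$, one may make $s_2$ arbitrarily close to $\frac{3}{2}$, covering the statement's entire range. The new gradient estimate
\begin{align*}
|\nabla_{\frac{1}{2}}\mathcal{A}_H(x_n^{(2)})|_{s_2} \leq o(1) + c_{s_2}|\nabla H(x_n^{(1)}) - \nabla H(x_n^{(2)})|_{s'}
\end{align*}
can now be completed: since $s' > \frac{1}{2}$, the Sobolev embedding $H^{s'}(S^1, \mathbb{H}) \hookrightarrow L^\infty(S^1, \mathbb{H})$ holds, and combined with the boundedness of $d^2 H$ and $d^3 H$, a direct Slobodeckij-seminorm calculation shows that $u \mapsto \nabla H(u)$ is Lipschitz from $H^{s'}$ to $H^{s'}$ on bounded sets. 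Hence the last term is $o(1)$ and $\nabla_{\frac{1}{2}}\mathcal{A}_H(x_n^{(2)}) \to 0$ in $H^{s_2}$. Equivalence to $\{x_n\}$ follows from the triangle inequality $|x_n^{(2)} - x_n|_{\frac{1}{2}} \leq |x_n^{(2)} - x_n^{(1)}|_{s'} + |x_n^{(1)} - x_n^{(0)}|_{\frac{1}{2}} \to 0$.

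The main analytic obstacle is the first iteration. At the critical regularity $H^{\frac{1}{2}}$ the Sobolev embedding into $L^\infty$ fails, so the superposition operator $u \mapsto \nabla H(u)$ cannot be directly shown Lipschitz from $H^{\frac{1}{2}}$ to itself, and the raw Lemma \ref{equiva} estimate at $r = \frac{1}{2}$ is not enough to conclude infinitesimality of the gradient at the higher level $H^{s_1}$. The interpolation step, which exploits a priori $H^{s_1}$-boundedness of the gradient together with its (PS)$^c$ infinitesimality in $H^{\frac{1}{2}}$, sidesteps this obstacle and supplies the hypothesis needed to run the second, routine iteration.
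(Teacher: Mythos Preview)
Your argument is correct, and it is a genuinely shorter route than the one taken in the paper. Both proofs iterate Lemma~\ref{equiva}, but the paper applies the transformation $x \mapsto x - (P^+ - P^-)\nabla_{\frac12}\mathcal{A}_H(x)$ four times (producing successive sequences $z_n$, $u_n$, $v_n$, $y_n$), whereas you get away with two applications. The economy comes from your interpolation step after the first application: rather than iterating again to upgrade the gradient from $H^{\frac12}$-infinitesimal to $H^{s'}$-infinitesimal (which is what the paper does via the bounded-differential-of-superposition argument, in the spirit of Lemma~\ref{lemreg}), you simply observe that the gradient is already $H^{s_1}$-\emph{bounded} by the second estimate of Lemma~\ref{equiva}, and then the log-convexity inequality $|f|_{s'} \le |f|_{\frac12}^{\theta}\,|f|_{s_1}^{1-\theta}$ immediately forces $H^{s'}$-infinitesimality. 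This is a clean trick that the paper does not exploit. In the second application both proofs are essentially the same, relying on the Lipschitz continuity of $u\mapsto\nabla H(u)$ on $H^{s'}$ for $s'>\tfrac12$ (which the paper phrases via operator interpolation between $H^{\frac12}$ and $H^1$, while you do it directly with the Slobodeckij seminorm and the $L^\infty$ embedding; both are valid under the boundedness of $d^2H$ and $d^3H$). The only cosmetic point is that your constructed sequence depends on the auxiliary parameters $s_1, s'$, so to cover the full range $[\tfrac12,\tfrac32)$ you fix the target $s$ first and then choose $s_1, s'$ accordingly---which is exactly what the statement asks for.
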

\proof
We start by applying  the first part of Lemma \ref{equiva} to obtain, for $s<1$, a $H^s$-bounded (PS)$^c$ sequence $\{z_n\}_{n \in \mathbb{N}}$ which is equivalent to $\{x_n\}_{n \in \mathbb{N}}$ and then we modify this new sequence to get an equivalent one
\begin{align*}
u_n:= z_n - (P^+ - P^-)\nabla_{\frac{1}{2}} \mathcal{A}_H(z_n).
\end{align*}
By Lemma \ref{equiva} the sequence $\{u_n\}_{n \in \mathbb{N}}$ is $H^s$-bounded and $\nabla_{\frac{1}{2}} \mathcal{A}_H (u_n)$ is $H^s$-infinitesimal for any $s< 1$, where the latter claim follows by using the mean value theorem and the fact that the composition operator 
\begin{align*}
H^{\frac{1}{2}}(S^1,\mathbb{H}) &\rightarrow H^{\frac{1}{2}}(S^1,\mathbb{H})\\
u &\mapsto \nabla H(u)
\end{align*}
has bounded differential if restricted to elements of $H^{r}(S^1,\mathbb{H})$ with $r>\frac{1}{2}$ (for this we have to require the boundedness of the second and third order differentials of $H$; see the analogous proof we gave for the operator $\varphi_*$).\\
Now we introduce another equivalent sequence
\begin{align*}
v_n:= u_n - (P^+ - P^-)\nabla_{\frac{1}{2}} \mathcal{A}_H(u_n),
\end{align*}
which is bounded in the $H^{s+ \frac{1}{2}}$-norm for any $s <1$ because of the first part of Lemma \ref{equiva}, and for which $\nabla_{\frac{1}{2}} \mathcal{A}_H(v_n)$ is $H^s$-infinitesimal.\\ %because the map $v \mapsto \nabla_{\frac{1}{2}} \mathcal{A}_H(v)$ is $H^s$-Lipschitz (since $s <1$).\\
Finally we define another equivalent sequence
\begin{align*}
y_n:= v_n - (P^+ - P^-)\nabla_{\frac{1}{2}} \mathcal{A}_H(v_n).
\end{align*}
The composition operator 
\begin{align*}
H^{1}(S^1,\mathbb{H}) &\rightarrow H^{1}(S^1,\mathbb{H})\\
u &\mapsto \nabla H(u)
\end{align*}
has bounded differential if restricted to elements of $H^{t}(S^1,\mathbb{H})$ with $t>1$, therefore by interpolation with the composition operator defined on $H^\frac{1}{2}(S^1,\mathbb{H})$ we get that for any $\frac{1}{2} \leq q \leq 1$, the operator 
\begin{align*}
H^{q}(S^1,\mathbb{H}) &\rightarrow H^{q}(S^1,\mathbb{H})\\
u &\mapsto \nabla H(u)
\end{align*}
has bounded differential if restricted to elements of $H^{t}(S^1,\mathbb{H})$, indeed by the interpolation formula we have
\begin{align*}
|D \nabla H (u)|_{\mathcal{L}(H^q,H^q)} \leq |D\nabla H (u)|_{\mathcal{L}(H^\frac{1}{2},H^\frac{1}{2})} ^\theta |D \nabla H (u)|_{\mathcal{L}(H^1,H^1)} ^{1- \theta},
\end{align*}
for any $q=1- \dfrac{\theta}{2}$ with $\theta \in (0,1)$. As a corollary of Lemma \ref{equiva} we deduce that the sequence $\nabla_{\frac{1}{2}} \mathcal{A}_H (y_n)$ is $H^s$-infinitesimal for any $s< \frac{3}{2}$.
\endproof
As an aside remark we observe that (PS)$^c$ sequence $\{y_n\}_{n \in \mathbb{N}}$ appearing in the statement of the proposition above can be made $H^s$-bounded for any $s>0$, if additional conditions on the boundedness of the higher order differentials of $H$ are fulfilled.\\
The $L^2$-gradient of $\mathcal{A}_H$ is readily computed as
\begin{align*}
\nabla_{L^2} \mathcal{A}_H (x) = -\big(J\dot{x} + \nabla H(x)\big),
\end{align*}
in fact
\begin{align*}
d\mathcal{A}_H (x) [u]&= \dfrac{d}{ds} \big{|}_{s=0} \mathcal{A}_H (x + su) \\
&= -\frac{1}{2} \int_0 ^1 \langle J \dot{u} (t), x(t) \rangle dt -\frac{1}{2} \int_0 ^1 \langle J \dot{x} (t), u(t)\rangle dt - \int_0 ^1 dH(x(t))[u(t)] dt \\
&= -\int_0 ^1 \langle J \dot{x}(t) + \nabla H(x(t)), u(t) \rangle dt.
\end{align*}
If $\{ x_n\}_{n \in \mathbb{N}}$ is a (PS)$^c$ sequence found by means of the proposition above we get that $\nabla_{L^2} \mathcal{A}_H (x_n)$ is infinitesimal in the $L^2$-norm, indeed using Lemma \ref{lemgrad} we deduce
\begin{align*}
|\nabla_{\frac{1}{2}} \mathcal{A}_H (x_n)|_1 \leq | \nabla_{L^2} \mathcal{A}_H (x_n)|_{L^2} \leq 2 \pi |\nabla_{\frac{1}{2}} \mathcal{A}_H (x_n)|_1,
\end{align*}
thus the loop
\begin{align*}
z_n(t):=J \dot{x}_n(t) + \nabla H(x_n(t)) 
\end{align*}
is infinitesimal in the $L^2$-norm.
\begin{prop}\label{lemPS}
Let $H \in C^{\infty}(\mathbb{H},\mathbb{R})$ be a Hamiltonian whose second and third order differentials are bounded.
Let $\{x_n\}_{n \in \mathbb{N}}$ be a bounded \emph{(PS)}$^c$ sequence of $\mathcal{A}_H$, then we can find an equivalent \emph{(PS)}$^c$ sequence $\{y_n\}_{n \in \mathbb{N}}$ which is $H^s$-bounded for any $s < \frac{3}{2}$ and such that 
\begin{align*}
|J \dot{y}_n + \nabla H(y_n)|_{L^2} \underset{n \rightarrow + \infty} \longrightarrow 0.
\end{align*} 
Moreover we have that
\begin{align*}
H(y_n) \underset{n \rightarrow + \infty} \longrightarrow c \in \mathbb{R} \cup \{\pm \infty\}
\end{align*} 
uniformly to some constant function $c$.
\end{prop}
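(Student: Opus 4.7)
The plan is to extract the first two assertions directly from the preceding lemma: applied with any $s \in (1, 3/2)$, it yields an equivalent \emph{(PS)}$^c$ sequence $\{y_n\}$ that is $H^s$-bounded and satisfies $|\nabla_{1/2}\mathcal{A}_H(y_n)|_s \to 0$. Combining this with the identity $\nabla_{L^2}\mathcal{A}_H(y) = -\bigl(J\dot y + \nabla H(y)\bigr)$ and the two-sided estimate $|\nabla_{1/2}\mathcal{A}_H|_1 \leq |\nabla_{L^2}\mathcal{A}_H|_{L^2} \leq 2\pi |\nabla_{1/2}\mathcal{A}_H|_1$ recorded just above the statement immediately gives $|J\dot y_n + \nabla H(y_n)|_{L^2} \to 0$.

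The real content of the proposition is the uniform convergence of $H \circ y_n$. I would set $z_n := J\dot y_n + \nabla H(y_n)$, so that $|z_n|_{L^2} \to 0$, and exploit the energy-preserving character of the Hamiltonian vector field. Since $y_n \in H^s$ with $s > 1/2$ is absolutely continuous, one can differentiate $H(y_n(t))$ in time; solving for $\dot y_n = J\nabla H(y_n) - J z_n$ using $J^{-1} = -J$ and invoking the identity $\langle v, Jv \rangle = 0$ (a direct consequence of $J^T = -J$), the conservation-of-energy term vanishes and one is left with
\begin{align*}
\frac{d}{dt} H(y_n(t)) = -\langle \nabla H(y_n(t)), J z_n(t) \rangle.
\end{align*}
Integrating and applying the Cauchy--Schwarz inequality yields the uniform oscillation bound
\begin{align*}
|H(y_n(t_2)) - H(y_n(t_1))| \leq |\nabla H(y_n)|_{L^2}\, |z_n|_{L^2}
\end{align*}
for every $t_1, t_2 \in S^1$. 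The first factor is controlled by the $L^2$-boundedness of $\{y_n\}$ together with the linear growth of $\nabla H$ inherited from its Lipschitz property, while the second tends to zero, so $H \circ y_n$ becomes uniformly close to its mean value $\bar c_n := b_H(y_n)$.

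To identify the limit I would write $\bar c_n = a(y_n) - \mathcal{A}_H(y_n)$: the quadratic form $a$ is bounded on $H^{1/2}$-bounded sets, and $\mathcal{A}_H(y_n) \to c$ by hypothesis, so $\{\bar c_n\}$ is bounded. Extracting a subsequence then gives $\bar c_n \to c^\ast \in \mathbb{R} \subset \mathbb{R}\cup\{\pm\infty\}$, and combined with the oscillation estimate above this delivers the stated uniform convergence of $H\circ y_n$ to the constant $c^\ast$.

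The main (though essentially routine) step is the computation of $\frac{d}{dt}H(y_n(t))$: once the antisymmetry of $J$ is used to cancel the energy-preserving term, everything else reduces to Cauchy--Schwarz together with the compactness of $\mathbb{R} \cup \{\pm\infty\}$.
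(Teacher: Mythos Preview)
Your proof is correct and follows essentially the same route as the paper: invoke the preceding lemma for the regularity upgrade, pass from the $H^{1/2}$-gradient in $H^1$-norm to the $L^2$-gradient via the two-sided estimate, then differentiate $H(y_n(t))$ and use the skew-symmetry $\langle Jv,v\rangle=0$ to reduce the time derivative to a pairing with the infinitesimal remainder $z_n$. The only cosmetic difference is that the paper writes $\frac{d}{dt}H(y_n)=\langle z_n,\dot y_n\rangle$ and bounds $|\dot y_n|_{L^2}$ using the $H^1$-boundedness of $y_n$, whereas you write it as $-\langle \nabla H(y_n),Jz_n\rangle$ and bound $|\nabla H(y_n)|_{L^2}$ via the Lipschitz property of $\nabla H$; the two expressions are in fact equal, and your added care in extracting a convergent subsequence of the means $\bar c_n$ makes the final convergence statement more precise than the paper's terse ``the conclusion follows.''
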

\proof
The first part of the result follows from the discussion above.\\
If we differentiate $H \circ y_n$ with respect to $t$ we get 
\begin{align*}
\dfrac{d}{dt} H(y_n(t))&= d H(y_n(t))[ \dot{y}_n(t)]= \langle \nabla H (y_n(t)), \dot{y}_n(t)  \rangle\\
&=\langle z_n(t) - J\dot{y}_n(t) , \dot{y}_n(t)  \rangle\\
&=\langle z_n(t) , \dot{y}_n(t)  \rangle,
\end{align*}
with $z_n$ loop which is infinitesimal in the $L^2$-norm.
Since $\{\dot{y}_n \}_{n \in \mathbb{N}}$ is $L^2$-bounded, we deduce that $\dfrac{d}{dt} H(y_n)$ defines an infinitesimal sequence in the $L^1$-norm and because $y_n$ is absolutely continuous the conclusion follows.
\endproof

\section{Non-squeezing theorem}
\label{sec3}
Let $(\mathbb{H},\omega)$ be a symplectic Hilbert space endowed with a compatible inner product $\langle \cdot, \cdot \rangle$ and let $\{e_i,f_i\}_{i \in \mathbb{N}}$ be a countable orthonormal basis such that any $\{e_i,f_i\}$ spans a symplectic plane. For any $n \in \mathbb{N}$ we consider the orthogonal projections
\begin{align}
\label{proiez}
\begin{split}
P_n :\mathbb{H} &\rightarrow \mathbb{H}_n\\
 x& \mapsto \sum_{i=1}^{n} \langle x,e_i \rangle e_i +  \langle x,f_i \rangle f_i,
 \end{split}
\end{align}  
onto the $2n$-dimensional symplectic Hilbert subspace $\mathbb{H}_n$.\\
We define a set of admissible symplectomorphisms
\begin{align*}
Symp_a (\mathbb{H},\omega,\langle \cdot, \cdot \rangle):=& \big{\{} \varphi \in Symp(\mathbb{H},\omega) \ \big{|}  \textrm{ for } k=\pm 1, \ D \varphi^{k} \textrm{ and } D^2 \varphi^{k} \textrm{ are bounded}; \\
& (I-P_n) {\varphi^k}_{\vert P_n \mathbb{H}} \underset{n \rightarrow + \infty} \longrightarrow 0 \textrm{ uniformly on bounded sets}; \\
& [P_n , D \varphi^k (x)^*]  \underset{n \rightarrow + \infty} \longrightarrow 0 \textrm{ in operators' norm, uniformly in} \ x \in \mathbb{H}\\ 
&\textrm{on bounded sets} \big{\}}.
\end{align*}
which is easy to check that is actually a group.
\begin{thm*}[Infinite dimensional non-squeezing]
Let $(\mathbb{H},\omega)$ be a Hilbert symplectic space endowed with a compatible inner product $\langle \cdot, \cdot \rangle$, $B_r$ the ball centred in $0$ with radius $r$ and $Z_R$ a cylinder whose basis lays on a symplectic plane and has symplectic area $\pi R^2$. Let $\varphi \in Symp_a (\mathbb{H},\omega,\langle \cdot, \cdot \rangle)$, if $\varphi(B_r) \subset Z_R$ then $r\leq R$.
\end{thm*}
\begin{proof}[Proof's outline]
The cylinder $Z_R$ can be written as
\begin{align*}
Z_R= \{ z \in \mathbb{H} \ | \ x_1^2 + y_1^2 < R^2  \}.
\end{align*}
The non-squeezing theorem we want to prove is equivalent to the claim that the ball $B_s$ with $s>1$ cannot be symplectically embedded by $\varphi \in Symp_a (\mathbb{H},\omega,\langle \cdot, \cdot \rangle)$ into the cylinder $Z_1$. Indeed if any admissible symplectomorphism squeezes $B_{\frac{r}{R}}$ into $Z_1$, then the symplectomorphism $\phi \in Symp_a (\mathbb{H},\omega,\langle \cdot, \cdot \rangle)$ defined as $\phi = R \phi R^{-1}$ squeezes $B_r$ into $Z_R$ and vice-versa if any $\phi$ squeezes $B_r$ into $Z_R$ then $\varphi = R^{-1} \varphi R$ squeezes $B_{\frac{r}{R}}$ into $Z_1$.\\
Our strategy to prove the above reformulation of the non-squeezing theorem is assuming the existence for $r>1$ of an admissible symplectic embedding $\varphi: B_r \hookrightarrow Z_1$ and then showing that this leads to a contradiction.\\
Since $r>1$ we can choose two real numbers $m>\pi$, $\delta>0$ and a smooth map $g:[0,+\infty[ \rightarrow \mathbb{R}$ such that
\begin{align*}
 \left\lbrace \begin{array}{l}
 g(t)=0  \ \ \ \ \  \textrm{  \ \ \ if } t <  \delta, \\
 g(t)= m  \ \ \ \  \textrm{  	\ \ \ if } t \geq r- \delta,\\
 0 \leq g '(t) < \pi.
 \end{array} \right.
\end{align*}
We define a time-independent Hamiltonian $F:\mathbb{H} \rightarrow \mathbb{R}$ as 
\begin{align*}
F(x):= g (|x|^2).
\end{align*}
Its Hamiltonian flow is supported in $B_r$ and Proposition \ref{propF} will show that the only bounded (PS)$^c$ sequences of $\mathcal{A}_F$ are at non positive levels $c$.
Nevertheless Proposition \ref{propns} will exhibit a (PS)$^c$ sequence of $\mathcal{A}_F$ at positive level, hence the initial assumption $\varphi(B_r) \subset Z_1$ leads to a contradiction.
\end{proof}
To complete the proof we need to show that Proposition \ref{propF} and Proposition \ref{propns} hold.
\begin{prop}
\label{propF} The action functional $\mathcal{A}_F:E\rightarrow \mathbb{R}$ associated to the time-independent Hamiltonian $F(x)=g(|x| ^2)$ has no bounded \emph{(PS)}$^c$ sequences at any positive level $c$.
\end{prop}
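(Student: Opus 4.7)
The plan is to argue by contradiction, assuming there exists a bounded (PS)$^c$ sequence $\{x_n\}$ of $\mathcal{A}_F$ with $c>0$. Since $\nabla F(x)=2g'(|x|^2)x$ and $g$ has derivatives of every order bounded with support in the compact interval $[\delta,r-\delta]$, the Hamiltonian $F$ has bounded differentials of every order on $\mathbb{H}$, so Proposition \ref{lemPS} applies. It produces an equivalent (PS)$^c$ sequence $\{y_n\}$ which is $H^s$-bounded for some $s\in(1/2,3/2)$ and such that $z_n:=J\dot y_n+\nabla F(y_n)\to 0$ in $L^2$. The continuous embedding $H^s(S^1,\mathbb{H})\hookrightarrow L^\infty(S^1,\mathbb{H})$ for $s>1/2$ yields a uniform sup-norm bound $|y_n|_{L^\infty}\leq M$.

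The first step is to show that $|y_n(t)|^2$ is asymptotically independent of $t$. Writing $\dot y_n=J\nabla F(y_n)-Jz_n=2g'(|y_n|^2)Jy_n-Jz_n$ and using $\langle y_n,Jy_n\rangle=\omega(y_n,y_n)=0$, one gets
\begin{align*}
\frac{d}{dt}|y_n(t)|^2=2\langle y_n,\dot y_n\rangle=-2\langle y_n,Jz_n\rangle,
\end{align*}
whence $\bigl||y_n(t)|^2-|y_n(0)|^2\bigr|\leq 2M|z_n|_{L^1}\to 0$ uniformly in $t$. Passing to a subsequence with $|y_n(0)|\to R$, one obtains $|y_n(t)|^2\to R^2$ uniformly on $S^1$. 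Substituting $J\dot y_n=z_n-\nabla F(y_n)$ into the symplectic term of $\mathcal{A}_F(y_n)$ and using $|z_n|_{L^2}\to 0$ gives
\begin{align*}
\mathcal{A}_F(y_n)=\tfrac{1}{2}\int_0^1\langle\nabla F(y_n),y_n\rangle\,dt-\int_0^1 F(y_n)\,dt+o(1)=\int_0^1\bigl[g'(|y_n|^2)|y_n|^2-g(|y_n|^2)\bigr]\,dt+o(1),
\end{align*}
and uniform convergence together with continuity of $g$ and $g'$ produce the limit $c=g'(R^2)R^2-g(R^2)$.

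The main step, and the main obstacle, is to show that $\alpha:=g'(R^2)=0$. Suppose for contradiction $\alpha>0$ (so $\alpha\in(0,\pi)$) and set $w_n(t):=e^{-2\alpha Jt}y_n(t)$. A direct computation gives
\begin{align*}
\dot w_n(t)=e^{-2\alpha Jt}\bigl(2(g'(|y_n|^2)-\alpha)Jy_n-Jz_n\bigr),
\end{align*}
which tends to $0$ in $L^2$ because $g'(|y_n|^2)\to\alpha$ uniformly, $y_n$ is $L^2$-bounded, and $z_n\to 0$ in $L^2$. As $|e^{-2\alpha Jt}|=1$, the Bochner inequality yields $|w_n(1)-w_n(0)|\leq|\dot w_n|_{L^1}\to 0$; combined with the periodicity $y_n(1)=y_n(0)$, this gives $(e^{-2\alpha J}-I)y_n(0)\to 0$. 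Now $e^{-2\alpha J}-I$ preserves every coordinate symplectic plane $\mathrm{span}\{e_i,f_i\}$ and acts there as rotation by $-2\alpha$ minus the identity, whose inverse has operator norm $1/(2\sin\alpha)$ uniformly across planes. Consequently $y_n(0)\to 0$, contradicting $|y_n(0)|\to R\geq\sqrt{\delta}>0$ (the lower bound holding because $g'$ vanishes outside $[\delta,r-\delta]$). Thus $\alpha=0$ and $c=-g(R^2)\leq 0$, contradicting $c>0$. The crucial ingredient replacing the finite-dimensional compactness argument is precisely this uniform invertibility of $e^{-2\alpha J}-I$ across all coordinate symplectic planes, which rests on the compatibility of $\langle\cdot,\cdot\rangle$ with $\omega$.
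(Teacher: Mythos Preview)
Your proof is correct and takes a genuinely different route from the paper's.

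The paper argues in the Fourier picture. After invoking Proposition~\ref{lemPS} it shows $g'(|x_n(t)|^2)\to d$ uniformly, so that $\nabla_{1/2}b_F(x_n)-2d\,T^*x_n\to 0$ in $H^{1/2}$; combined with the (PS) condition this gives $x_n^+-x_n^--2d\,T^*x_n\to 0$. Writing out $T^*$ on each Fourier mode and using $d<\pi$, the paper bounds the multipliers $1-\tfrac{d}{\pi|k|}$ (for $k>0$) and $1+\tfrac{d}{\pi|k|}$ (for $k<0$) uniformly away from zero, forcing $x_n^\pm\to 0$ and hence $\mathcal{A}_F(x_n)\to -\lim b_F(x_n)\in[-m,0]$.

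Your argument stays in the time picture: you extract the limit radius $R$, compute the candidate level $c=g'(R^2)R^2-g(R^2)$ directly from the approximate Hamilton equation, and then rule out $\alpha=g'(R^2)>0$ by the monodromy trick $w_n(t)=e^{-2\alpha Jt}y_n(t)$, exploiting the uniform invertibility of $e^{-2\alpha J}-I$ across all coordinate symplectic planes. Both arguments hinge on the same non-resonance condition $0\le g'<\pi$, but you use it as the spectral gap of a single unitary $e^{-2\alpha J}$, whereas the paper uses it as a lower bound on a family of Fourier multipliers. Your route is a bit more geometric and avoids the somewhat delicate passage from $g(|x_n|^2)\to c_1$ to $g'(|x_n|^2)\to d$ that the paper takes for granted; on the other hand, the paper's Fourier argument generalises more readily to the anisotropic quadratic form $q$ appearing later in Lemma~\ref{PSK<0}. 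One small remark: to justify the pointwise computations with $\dot y_n$ and the fundamental theorem of calculus for $w_n$, you should take $s\ge 1$ (which Proposition~\ref{lemPS} allows), not merely $s>1/2$.
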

\proof
We know that
\begin{align*}
\nabla F(x)= 2 g '(|x|^2) x,
\end{align*}
and by the properties of $g$ there is a real number $\epsilon>0$ such that 
\begin{align*}
g'(t) < \pi - \epsilon, \textrm{ for any } t.
\end{align*}
Given any bounded (PS)$^c$ sequence of loops, according to Proposition \ref{lemPS} we can find  an equivalent $H^1$-bounded (PS)$^c$ sequence $\{x_n\}_{n \in \mathbb{N}}$ such that
\begin{align*}
g(|x_n(t)|^2)=F(x_n(t)) \underset{n \rightarrow + \infty} \longrightarrow c_1 \in \mathbb{R}
\end{align*}
uniformly in $t$, hence we deduce the uniform convergence of
\begin{align*}
g ' (|x_n(t)|^2) \underset{n \rightarrow \infty}\longrightarrow  d  \in \mathbb{R}.
\end{align*}
This implies that
\begin{align*}
\nabla F(x_n(t)) -2  d x _n(t)  \underset{n \rightarrow \infty}\longrightarrow  0 \ \ \textrm{ uniformly in t},
\end{align*}
and hence
\begin{align*}
\nabla F(x_n) -2  d x _n  \underset{n \rightarrow \infty}\longrightarrow  0 \ \ \textrm{ in } L^2(S^1,\mathbb{H}).
\end{align*}
From Lemma \ref{lemgrad} it follows that
\begin{align*}
|\nabla_{\frac{1}{2}} b_F(x_n) -2  d T^*x _n|_{\frac{1}{2}} = |T^*(\nabla F(x_n) -2  d x _n)|_{\frac{1}{2}} \leq  |\nabla F(x_n) -2  d x _n|_{L^2},
\end{align*}
thus
\begin{align*}
\nabla_{\frac{1}{2}} b_F(x_n) -2  d T^*x _n \underset{n \rightarrow \infty}\longrightarrow  0 \ \  \textrm{ in } H^{\frac{1}{2}}(S^1,\mathbb{H}).
\end{align*}
In order for $x_n=(x_n ^- ,x_n ^0 ,x_n ^+)$ to define a (PS)$^{c}$ sequence for $\mathcal{A}_{F}$ it is necessary that $\nabla_{\frac{1}{2}} a(x_n) - \nabla_{\frac{1}{2}} b_{F}(x_n)  \underset{n \rightarrow \infty}{\longrightarrow} 0$ in the $H^{\frac{1}{2}}$-norm, thus it is necessary that
\begin{align}
\label{mmm}
x_n^+ - x_n ^- - 2  d T^*x _n = \nabla_{\frac{1}{2}} a(x_n) - 2  d T^*x _n   \underset{n \rightarrow \infty}{\longrightarrow} 0 \ \  \textrm{ in } H^{\frac{1}{2}}(S^1,\mathbb{H}).
\end{align}
Because of Lemma \ref{lemgrad} we know that 
\begin{align*}
& 2 d T^*x _n^0 =   2 d x^0 _n,\\
&2 d T^*x _n^k = \dfrac{2 d}{2 \pi |k|} x_n ^k  \textrm{ for } k \neq 0,
\end{align*}
thus the inequality $d < \pi - \epsilon$ implies that
\begin{align*}
\frac{\epsilon}{\pi} |x_n^+|_{\frac{1}{2}}  & = |x_n^+  - \frac{2  (\pi- \epsilon)}{2 \pi} x _n ^+|_{\frac{1}{2}} \leq |x_n^+  - 2  d T^*x _n ^+|_{\frac{1}{2}} ,  \\
|x_n^-|_{\frac{1}{2}}   &\leq |x_n^-  + 2  d T^*x _n ^-|_{\frac{1}{2}} .
\end{align*}
Therefore \eqref{mmm} is possible only if $|x_n ^+|_\frac{1}{2} ^2  \underset{n \rightarrow \infty}\longrightarrow 0$ and $|x_n ^-|_\frac{1}{2} ^2   \underset{n \rightarrow \infty}\longrightarrow 0$; since 
\begin{align*}
\mathcal{A}_F(x_n)= \dfrac{1}{2} |x^+_n|_{\frac{1}{2}}^2 - \dfrac{1}{2} | x^-_n|_{\frac{1}{2}}^2- b_F(x_n),
\end{align*}
and 
\begin{align*}
0 \leq b_F(x) \leq m, \ \ \  \forall x \in E,
\end{align*}
we deduce that
\begin{align*}
\mathcal{A}_F(x_n) \underset{n \rightarrow \infty}\longrightarrow  c,
\end{align*}
with $-m \leq c \leq 0$.
\endproof
The last step to prove the non-squeezing is to show that if squeezing were possible, then we would be able to find a (PS)$^{c}$ sequence at level $c>0$ for the action functional $\mathcal{A}_F$.
In order to do this we first introduce the concept of approximation scheme as in \cite{CLL97} and \cite{Abb01}.
\begin{defin}
Let $X$ be a separable Hilbert space. An \textit{approximation scheme} with respect to a bounded linear operator $L \in \mathcal{L}(X,X)$ is a sequence $\{P_n\}_{n \in \mathbb{N}}$ of finite dimensional orthogonal projections such that
\begin{enumerate}[1)]
\item $rank(P_n) \subset rank(P_m)$ if $n \leq m$,
\item $P_n \underset{n \rightarrow + \infty} \longrightarrow I$ strongly,
\item $[P_n,L]:=P_n L -L P_n \underset{n \rightarrow + \infty} \longrightarrow 0$ in the operators' norm.
\end{enumerate}
\end{defin}
An explicit example of approximation scheme for the identity map is given by the sequence $\{P_n\}_{n \in \mathbb{N}}$ of orthogonal projections as in \eqref{proiez}.\\
Let $\varphi \in Symp_a (\mathbb{H},\omega,\langle \cdot, \cdot \rangle)$ be such that $\varphi(B_r) \subset Z_1$ with $r>1$, we define a Hamiltonian $H: \mathbb{H} \rightarrow \mathbb{R}$ as
\begin{align*}
 H(x):=\left\lbrace \begin{array}{l}
 F(\varphi^{-1}(x))  \   \textrm{  \ \ if } x \in \varphi (B_r), \\
m  \ \ \ \ \  \textrm{  \ \ \ \ \ \ \ \ \ if } x \notin \varphi (B_r).
   \end{array} \right.
\end{align*}
The support of the Hamiltonian $H-m$ is the set 
\begin{align*}
\overline{\{ H < m \}} \subsetneq \varphi (B_r)
\end{align*}
which has positive distance from $\partial \overline{ \varphi (B_r)}$.\\
Indeed $D\varphi^{-1}$ is bounded by assumption, hence by the mean value theorem for any points $x,y \in B_r$ we have  
\begin{align*}
\| x - y \| = \| \varphi^{-1}(\varphi(x)) - \varphi^{-1}(\varphi(y)) \| \leq C \| \varphi(x) - \varphi(y) \|
\end{align*}
with $C$ positive constant, thus 
\begin{align*}
\frac{\|x - y \|}{C} \leq \| \varphi(x) - \varphi(y) \|,
\end{align*}
and this implies that if $\| x-y\| \geq \delta_0$, then $\| \varphi(x)-\varphi(y)\| \geq \frac{\delta_0}{C}$.\\
Let $x\in B_r$, if the distance $d(x,\partial \overline{B_r})$ is not larger than the value $\delta >0$ appearing in the definition of the function $F$, then $F(x)=m$.\\
Therefore for any point $y \in \varphi(B_r)$ such that $d(y,\partial \overline{\varphi(B_r)}) \leq \frac{\delta}{C}$, we get $H(y)=m$ and since $\varphi(\partial \overline{B_r}) = \partial (\varphi(\overline{B_r}))$ we deduce the claim, namely the existence of a real value $\lambda>0$ such that 
\begin{align*}
\overline{\{ H < m \}} +B_\lambda  &\subsetneq  \varphi(B_r).
\end{align*}
%In order to find (PS) sequences for $\mathcal{A}_{H}$, it will be more convenient to modify $H$ in such a way that it becomes quadratic at infinity.\\ 
We define a quadratic form $q: \mathbb{H}\rightarrow \mathbb{R}$ as 
\begin{align*}
q(x)= q_1 ^2 +p_1^2 + \dfrac{1}{N^2}\sum_{i=2}^\infty (q_i^2 + p_i^2), 
\end{align*}
where $p_i$, $q_i$ are the $i$-th coordinates of $x$ in the basis $\{e_i,f_i\}_{i \in \mathbb{N}}$ and $N$ is a natural number large enough so that 
\begin{align}
\label{hplambda}
\overline{\{ H < m \}} +B_\lambda  \subset  \{ q < 1\} \ \ \textrm{ and } \ \ \overline{\{ H < m \}} +B_\lambda \subset  \varphi(B_r),
\end{align}
for some $\lambda >0$.
The possibility of finding such values $N$ and $\lambda$ is a consequence of the discussion above about the set $\overline{\{ H < m \}}$, together with the observations that $\partial \varphi( \overline{B_r}) \subset \partial \overline{Z_1}$ and that $\varphi$ sends bounded sets into bounded sets.\\
Let us fix a real number $\mu$ such that
\begin{align*}
\pi < \mu < \min\{m,2 \pi \},
\end{align*}
and choose a smooth function $\rho:[0,+\infty[ \rightarrow \mathbb{R}$ satisfying
\begin{align*}
 \left\lbrace \begin{array}{l}
\rho(t)= m \ \ \ \textrm{ for any }t\leq 1\\ 
\rho(t) \geq t \mu \ \ \ \textrm{ for any } t\geq 0\\
\rho(t) = t \mu \ \ \ \textrm{ if } t \geq M \textrm{ large enough}\\
0 < \rho' (t) \leq \mu \ \ \ \textrm{ for any } t>1.
   \end{array} \right.
\end{align*}
We introduce the Hamiltonian 
\begin{align*}
K(x):=  \left\lbrace \begin{array}{l}
H(x) \ \ \ \textrm{ if } x \in \{ q < 1\} \\ 
\rho(q(x)) \textrm{ if } x \in \{ q \geq 1\}.\\
   \end{array} \right.
\end{align*} 
For any fixed $n \in \mathbb{N}$ we define the Hamiltonian $K_n : \mathbb{H}_n \rightarrow \mathbb{R}$ as
\begin{align*}
K_n:= K_{| \mathbb{H}_n}.
\end{align*}
Every $\mathbb{H}_n$ is finite dimensional, hence for fixed $n$ we can apply a standard minimax argument in order to find a critical point of $\mathcal{A}_{K_n}$ at positive critical level.\\
We do not reprove here this deep classical result, which is the main topic of Chapter 3 of \cite{HZ94} and is essentially equivalent to the Gromov's non-squeezing in $\mathbb{R}^{2n}$; nevertheless we give an outline of the proof of this fact, paying particular attention to the intermediate results we need to adapt to our setting.
This also gives us the opportunity to highlight what can go wrong in trying to extend the same proof to an infinite dimensional Hilbert symplectic space setting.\\
Let us describe the minimax setting. 
For $n\in \mathbb{N}$ fixed, we consider the Hilbert space of loops
\begin{align*}
E_n:=H^{\frac{1}{2}}(S^1,\mathbb{H}_n),
\end{align*}
which can be trivially identified with a subspace of $E=H^{\frac{1}{2}}(S^1,\mathbb{H})$. If we consider the splitting 
\begin{align*}
E_n=E_n ^- \oplus E_n ^0 \oplus E_n^+
\end{align*} 
with respect to the Fourier coefficients, then $E_n ^- \subset E^-$, $E_n ^0 \subset E^0$ and $E_n^+ \subset E^+$.
We introduce the subsets
\begin{align*}
\tilde{\Gamma}_\alpha &:= \{ x \in E^+  \ \big{|} \ | x|_{\frac{1}{2}} = \alpha \} \subset E^+,\\
\tilde{\Gamma}_\alpha^n &:= \{ x \in E_n^+  \ \big{|} \ | x|_{\frac{1}{2}} = \alpha \} \subset E_n ^+,
\end{align*}
and we translate them by $\varphi(0)$ to obtain
\begin{align*}
\Gamma_\alpha :=\tilde{\Gamma}_\alpha + \varphi(0), \\
\Gamma_\alpha^n :=\tilde{\Gamma}_\alpha^n + \varphi(0).
\end{align*}
The next statement is a consequence of the fact that $K$ vanishes in a neighbourhood of $\varphi(0)$.
\begin{lem}
For $\alpha >0$ small enough we have that 
\begin{align*}
\inf_{x \in \Gamma_\alpha^n} \mathcal{A}_{K_n}(x) \geq \inf_{x \in \Gamma_\alpha} \mathcal{A}_K(x)>0.
\end{align*}
\end{lem}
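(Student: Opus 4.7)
The first inequality is immediate from the inclusion $\Gamma_\alpha^n \subset \Gamma_\alpha$ (which holds because $E_n^+ \subset E^+$), once one checks that on this common domain the two action functionals agree: both reduce to the same $H^{1/2}$-pairing together with the integral $\int_0^1 K(x(t))\,dt$. So everything reduces to proving $\inf_{\Gamma_\alpha} \mathcal{A}_K > 0$ for $\alpha > 0$ small.

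For $x \in \Gamma_\alpha$, decompose $x = \tilde{x} + \varphi(0)$ with $\tilde{x} \in E^+$ and $|\tilde{x}|_{1/2} = \alpha$. Since $\varphi(0)$ is a constant loop (hence in $E^0$) and $\tilde{x} \in E^+$, one has $x^+ = \tilde{x}$, $x^- = 0$, and therefore
\[
\mathcal{A}_K(x) = \tfrac{1}{2}\alpha^2 - \int_0^1 K(\tilde{x}(t) + \varphi(0))\,dt.
\]
The task is to show that the integral is $o(\alpha^2)$ as $\alpha \to 0$.

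Two properties of $K$ will be used. First, $K$ vanishes on a full ball $B_\rho(\varphi(0))$: indeed $F(y) = g(\|y\|^2)$ vanishes for $\|y\|^2 < \delta$, so $H = F \circ \varphi^{-1}$ vanishes near $\varphi(0)$ by continuity of $\varphi^{-1}$, and $K = H$ on the open neighbourhood $\{q < 1\}$ of $\varphi(0)$. Second, $K$ grows at most quadratically: $H \leq m$ pointwise, while $\rho(q(x)) \leq m + \mu\, q(x) \leq C(1 + \|x\|^2)$ since $q$ is equivalent to the squared $\mathbb{H}$-norm up to a factor depending on $N$. Combining these, for every $\eta > 0$ one obtains a global bound
\[
K(y) \leq C_\eta \, \|y - \varphi(0)\|^{2+\eta} \quad \text{for all } y \in \mathbb{H}.
\]
Inside $B_\rho(\varphi(0))$ the estimate is trivial; outside, one first absorbs the additive constant in the quadratic bound (using $\|y - \varphi(0)\| \geq \rho$) to get $K(y) \leq C'\|y - \varphi(0)\|^2$, and then upgrades the power by the factor $(\|y - \varphi(0)\|/\rho)^\eta \geq 1$.

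The continuous embedding $H^{1/2}(S^1,\mathbb{H}) \hookrightarrow L^{2+\eta}(S^1,\mathbb{H})$ then yields
\[
\int_0^1 K(\tilde{x}(t) + \varphi(0))\,dt \leq C_\eta\, |\tilde{x}|^{2+\eta}_{L^{2+\eta}} \leq C'_\eta\, \alpha^{2+\eta},
\]
so $\mathcal{A}_K(x) \geq \alpha^2\bigl(\tfrac{1}{2} - C'_\eta\, \alpha^\eta\bigr) > 0$ for $\alpha$ sufficiently small. The main subtle point is the need to beat a pure quadratic bound: the naive estimate $K(y) \leq C\|y - \varphi(0)\|^2$ only gives $\int K \leq C\alpha^2$, of the same order as the leading term $\tfrac{1}{2}\alpha^2$, leaving no margin. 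The decisive ingredient is the vanishing of $K$ on an entire ball around $\varphi(0)$, which upgrades the quadratic bound to any strictly superquadratic power and makes the Sobolev gain visible.
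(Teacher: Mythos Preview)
Your argument is correct. The route differs somewhat from the paper's: there the authors invoke Proposition~\ref{diff} to see that $b_K$ is smooth on $E$, and since $K$ vanishes identically near $\varphi(0)$ they obtain $\mathcal{A}_K(\varphi(0))=0$, $d\mathcal{A}_K(\varphi(0))=0$, and $d^2\mathcal{A}_K(\varphi(0))[u,u]=|P^+u|_{1/2}^2-|P^-u|_{1/2}^2$, after which Taylor's formula with Peano remainder gives $\mathcal{A}_K(\varphi(0)+v)=\tfrac12|P^+v|_{1/2}^2-\tfrac12|P^-v|_{1/2}^2+o(|v|_{1/2}^2)$ and the conclusion follows immediately on $\Gamma_\alpha$. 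You instead convert the vanishing of $K$ on a full ball, together with its global quadratic growth, into a pointwise bound $K(y)\le C_\eta\|y-\varphi(0)\|^{2+\eta}$ and close via the Sobolev embedding $H^{1/2}\hookrightarrow L^{2+\eta}$. Both arguments ultimately rest on the same observation (vanishing on a ball around $\varphi(0)$), but yours is more elementary in that it bypasses the Fr\'echet $C^2$-regularity of $b_K$ on $E$ and yields an explicit superquadratic rate $O(\alpha^{2+\eta})$, at the price of a few extra lines; the paper's version is terser and more conceptual.
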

\proof
Since $E_n ^+ \subset E^+$ the left inequality is trivially true; moreover $K$ vanishes identically in a neighbourhood of $\varphi(0)$, hence Proposition \ref{diff} yields to
\begin{align*}
\mathcal{A}_K (\varphi(0))=0, \ \ \ d\mathcal{A}_K (\varphi(0))=0, \ \ \ d^2 \mathcal{A}_K (\varphi(0))[u,u]= |P^+ u |_{\frac{1}{2}} ^2 - | P^- u |_{\frac{1}{2}} ^2,
\end{align*}
for any $u \in E$, thus the claim follows by the Taylor formula with Peano's reminder.
\endproof
We define the subsets $\tilde{\Sigma}_{\tau} \subset E$ and $\tilde{\Sigma}_{\tau}^n \subset E_n$ as 
\begin{align*}
\tilde{\Sigma}_\tau &:= \{ x \in E \ \big{|} \ x = x^- + x^0 + s e^+, \ |x^- + x^0 |_{\frac{1}{2}} \leq \tau \ \textrm{ and } 0 \leq s \leq \tau \},\\
\tilde{\Sigma}_\tau^n &:= \{ x \in E_n \ \big{|} \ x = x^- + x^0 + s e^+, \ |x^- + x^0 |_{\frac{1}{2}} \leq \tau \ \textrm{ and } 0 \leq s \leq \tau \},
\end{align*}
where $\tau >0$ and 
\begin{align*}
e^+ (t) := \dfrac{e^{2 \pi tJ} e_1}{\sqrt{2 \pi}} 
\end{align*}
is a circle in $E^+_1 \subset E^+$.
We have $|e^+|_{\frac{1}{2}} ^2= 1$, $|e^+|_{L^2} ^2= \dfrac{1}{2 \pi}$ and we denote with $\partial \tilde{\Sigma}_{\tau}$ (resp. with $\partial \tilde{\Sigma}_{\tau}^n$) the boundary of $\Sigma_{\tau}$ in $E^- \times E^0 \times \mathbb{R}e^+$ (resp. of $\Sigma_{\tau}^n$ in $E^-_n \times E^0_n \times \mathbb{R}e^+$).
We denote the shift of $\tilde{\Sigma}_\tau$ and $\tilde{\Sigma}^n_\tau$ by $\varphi(0)$ with
\begin{align*}
\Sigma_\tau  :=\tilde{\Sigma}_\tau + \varphi(0),\\
\Sigma^n_\tau  :=\tilde{\Sigma}^n_\tau + \varphi(0).
\end{align*}
\begin{lem}
If $\tau$ is big enough then $\mathcal{A}_K \big{|}_{\partial \Sigma_\tau} \leq 0$ and $\mathcal{A}_{K_n} \big{|}_{\partial \Sigma_\tau^n} \leq 0$.
\end{lem}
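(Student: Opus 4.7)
The idea is to split $\partial\tilde\Sigma_\tau$ into the three natural faces $\{s=0\}$, $\{s=\tau\}$ and $\{|x^-+x^0|_{1/2}=\tau\}$ and estimate $\mathcal{A}_K$ on each separately, using the growth of $K$ built into the quadratic form $q$. The basic pointwise input is
\begin{align*}
K(x) \geq \mu\,q(x) - \mu \qquad \forall\, x \in \mathbb{H},
\end{align*}
which holds because $K=\rho\circ q \geq \mu q$ on $\{q\geq 1\}$ by the defining property of $\rho$, and $K\geq 0 \geq \mu(q-1)$ on $\{q<1\}$. Since $J$ preserves $\mathrm{span}(e_1,f_1)$, the quadratic form $q$ commutes with the complex structure, and a mode-by-mode application of Parseval gives
\begin{align*}
\int_0^1 q(y(t))\,dt \;=\; \sum_{k\in\mathbb{Z}} q(y^k) \;\geq\; q(\varphi(0)+x^0) + \frac{s^2}{2\pi}
\end{align*}
for $y=\varphi(0)+x^-+x^0+se^+$, where $q(y^1)=s^2/(2\pi)$ uses $(se^+)^1=se_1/\sqrt{2\pi}$ together with $q(e_1)=1$.

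On the face $\{s=0\}$ both the symplectic term and $-\int_0^1 K(y)\,dt$ are $\leq 0$, so $\mathcal{A}_K\leq 0$ trivially. On the face $\{s=\tau\}$ the symplectic contribution is exactly $\tau^2/2$ while the Hamiltonian part contributes at most $-\mu\tau^2/(2\pi)+\mu$; the resulting coefficient $\tfrac{1}{2}-\tfrac{\mu}{2\pi}$ is strictly negative since $\mu>\pi$, so $\mathcal{A}_K\leq 0$ once $\tau^2 \geq 2\pi\mu/(\mu-\pi)$. On the lateral face $\{|x^-+x^0|_{1/2}=\tau\}$, orthogonality in $H^{1/2}$ yields $|x^-|_{1/2}^2 + \|x^0\|^2 = \tau^2$; combining $q(z)\geq \|z\|^2/N^2$ with $\|x^0+\varphi(0)\|^2 \geq \tfrac{1}{2}\|x^0\|^2-\|\varphi(0)\|^2$ produces
\begin{align*}
\mathcal{A}_K(y) \;\leq\; -\tfrac{1}{2}|x^-|_{1/2}^2 - \frac{\mu}{2N^2}\|x^0\|^2 + C_0,
\end{align*}
with $C_0$ depending only on $\mu$, $N$ and $\|\varphi(0)\|$. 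Both prefactors being strictly negative constants and the two quantities summing to $\tau^2$, the right-hand side is bounded above by $-c\tau^2+C_0$ for a fixed $c>0$, which is $\leq 0$ for $\tau$ sufficiently large.

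The main obstacle is the lateral face: the symplectic term $-\tfrac{1}{2}|x^-|_{1/2}^2$ controls only negative modes, while the potentially large constant-mode direction $x^0$ must be absorbed by the comparatively weak inequality $q\geq \|\cdot\|^2/N^2$. It is crucial that $N$ was fixed once and for all so that $\mu/(2N^2)$ is a fixed positive constant; the resulting threshold $\tau_0$ depends only on $\mu$, $N$ and $\|\varphi(0)\|$. The finite-dimensional counterpart on $\partial\Sigma_\tau^n$ follows by the same computation carried out in $E_n$ with $\varphi(0)$ replaced by $P_n\varphi(0)$ (whose norm is at most $\|\varphi(0)\|$): every Fourier identity and every pointwise inequality restricts intact to $\mathbb{H}_n$, so the same $\tau_0$ works uniformly for all $n$.
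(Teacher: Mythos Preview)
Your proof is correct and follows essentially the same route as the paper: the key lower bound $K\geq \mu q - c$, the $q$-orthogonality of the splitting $E^-\oplus E^0\oplus \mathbb{R}e^+$ via Parseval, and the use of $\mu>\pi$ together with the coercivity $q(z)\geq \|z\|^2/N^2$ on each face are exactly the ingredients the paper uses (you just make the lateral-face estimate more explicit). The only cosmetic difference is the finite-dimensional step: the paper simply observes $\partial\Sigma_\tau^n\subset\partial\Sigma_\tau$ and $\mathcal{A}_{K_n}=\mathcal{A}_K$ there, whereas you rerun the estimate with $P_n\varphi(0)$; both yield a threshold $\tau_0$ independent of $n$.
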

\proof
To prove this we use the asymptotic behaviour of $K$. Since $a \big{|}_{E^- \times E^0}$ and $b_K$ are non positive, it follows that $\mathcal{A}_K \big{|}_{E^- \times E^0} \leq 0$.\\
Let us assume that $s=\tau$ or that $0\leq s \leq \tau$ and $| x^0 - \varphi(0) +  x^- |_{\frac{1}{2}} =\tau$. Since $K$ is the quadratic form $\mu q $ outside of a bounded set, we can find a constant $c\geq 0$ such that
\begin{align*}
K(x) \geq \mu q(x) - c, \ \ \forall x \in \mathbb{H}.
\end{align*}   
The splitting $E^- \times E^0  \times \mathbb{R} e^+$ is orthogonal with respect to the scalar product defined by the quadratic form $x \rightarrow \int_0 ^1 q(x(t))dt$, thus
\begin{align*}
\mathcal{A}_K (x) &= \frac{1}{2}s^2 - \frac{1}{2} | x^- |_{\frac{1}{2}}^2 - \int_0 ^1 K(x(t)) dt\\
& \leq \frac{1}{2}s^2 - \frac{1}{2} | x^- |^2 - \mu \int_0 ^1 q(x(t))dt + c \\
&=\frac{1}{2}s^2 - \frac{1}{2} | x^- |^2  - \mu \int_0 ^1 q(se^+) dt - \mu \int_0 ^1 q(x^0 - \varphi(0)) dt - \mu \int_0 ^1 q(x^-) dt + c\\
&=\frac{1}{2}s^2 - \frac{1}{2} | x^- |^2  - \frac{\mu s^2}{2 \pi}  - \mu \int_0 ^1 q(x^0 - \varphi(0)) dt - \mu \int_0 ^1 q(x^-) dt + c\\
&\leq - \frac{1}{2} (\frac{\mu}{\pi} -1)s^2 - \frac{1}{2} | x^- |^2 - \mu \int_0 ^1 q(x^0 - \varphi(0) ) dt - \mu \int_0 ^1 q(x^-) dt + c.
\end{align*}
Using the positivity of the quadratic form $q$ and the inequality $\pi < \mu$ we deduce that if $\tau$ is large enough then the quantity above is non positive as long as $s=\tau$ or $| x^0 - \varphi(0) + x^- |_{\frac{1}{2}} = \tau$. In particular $\mathcal{A}_{K_n} \big{|}_{\partial \Sigma_\tau^n} \leq 0$, because $\partial \Sigma_\tau^n \subset \Sigma_\tau$.
\endproof
Throughout the entire chapter we are assuming that $D \varphi$ is bounded, hence we can use Lemma \ref{lemlip} to deduce that the gradient equation 
\begin{align}
\label{qqq}
\dot{x}=-\nabla_{\frac{1}{2}} \mathcal{A}_K(x), \ \ \ x \in E
\end{align}
is globally Lipschitz continuous, thus it defines a unique global flow
\begin{align*}
\mathbb{R}\times E &\rightarrow E \\
 (t,x) &\mapsto \varphi_t(x)=: x \cdot t,
\end{align*}
which, as well as its inverse, maps bounded sets into bounded sets.\\
The same is clearly true also for the flow of
\begin{align}
\label{rrr}
\dot{x}=-\nabla_{\frac{1}{2}} \mathcal{A}_{K_n}(x)= x^- - x^+ + \nabla_{\frac{1}{2}} b_{K_n}(x), \ \ \ x \in E_n.
\end{align}
The vector field \eqref{rrr} is a compact perturbation of a linear one, indeed the compactness of 
\begin{align*}
E_n &\rightarrow E_n\\
x &\mapsto \nabla_{\frac{1}{2}} b_{K_n}(x)=T^*\nabla K_n(x)
\end{align*}
is a consequence of Sobolev's compact embedding theorem which affirms that the linear embedding $T:H^{\frac{1}{2}}(S^1,\mathbb{H}_n) \hookrightarrow L^2(S^1,\mathbb{H}_n)$ is a compact map.
Using the variation of constants method it is not hard to obtain the following representation formula for the flow of \eqref{rrr}.
\begin{lem}\upshape{{\cite{HZ94} (Chapter 3, Lemma 7)}} 
\label{fluss6}
The flow of $\dot{x}= -\nabla \mathcal{A}_{K_n}(x)$ admits the representation
\begin{align*}
x\cdot t &=  e^t x^- +x^0  +e^{-t} x^ + + l(t,x) 
\end{align*}
where $l: \mathbb{R} \times E_n \rightarrow E_n$ is a compact map.
\end{lem}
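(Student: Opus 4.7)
The strategy is to rewrite the gradient equation as a linear part plus a compact perturbation and then apply the variation of constants formula.

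First I would split the right hand side of \eqref{rrr} into its linear and nonlinear pieces. Let $L:E_n\to E_n$ be the bounded linear operator $L:=P^--P^+$; explicitly $L$ acts as $+I$ on $E_n^-$, as $0$ on $E_n^0$ and as $-I$ on $E_n^+$. With $N(x):=\nabla_{\frac{1}{2}}b_{K_n}(x)$, the gradient equation reads
\begin{equation*}
\dot{x}=Lx+N(x).
\end{equation*}
Since the three subspaces $E_n^-$, $E_n^0$, $E_n^+$ are invariant under $L$ and $L$ acts on them as multiples of the identity, the semigroup $e^{tL}$ is explicit:
\begin{equation*}
e^{tL}x=e^t x^- + x^0 + e^{-t}x^+,\qquad x\in E_n.
\end{equation*}
By Lemma \ref{lemlip} the nonlinearity $N$ is globally Lipschitz, so the Cauchy problem has a unique global flow $\varphi_t$ and the Duhamel (variation of constants) formula yields
\begin{equation*}
x\cdot t\;=\;e^{tL}x+\int_{0}^{t}e^{(t-s)L}\,N(x\cdot s)\,ds
\;=\;e^{t}x^-+x^0+e^{-t}x^++l(t,x),
\end{equation*}
with
\begin{equation*}
l(t,x):=\int_{0}^{t}e^{(t-s)L}\,\nabla_{\tfrac{1}{2}}b_{K_n}(x\cdot s)\,ds.
\end{equation*}

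It remains to prove that $l:\mathbb{R}\times E_n\to E_n$ is compact, and the main point is the compactness of $N$ itself. Using Lemma \ref{lemgrad} one has the factorisation $N(x)=T^{*}(\nabla K_n\circ Tx)$ where $T:H^{\frac{1}{2}}(S^1,\mathbb{H}_n)\hookrightarrow L^{2}(S^1,\mathbb{H}_n)$ is the inclusion. Since $\mathbb{H}_n$ is finite dimensional, the Rellich--Kondrachov theorem (equivalently a direct Fourier argument: $\sum(1+|k|)\|x^k\|^2$ bounded forces the tails to vanish in $L^2$) implies that $T$ is compact; therefore so is its adjoint $T^{*}$. Because $\nabla K_n:\mathbb{H}_n\to\mathbb{H}_n$ is Lipschitz, the induced pointwise map $L^2\to L^2$ is continuous, and the composition $T^{*}\circ\nabla K_n\circ T:E_n\to E_n$ is compact.

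To conclude compactness of $l$, take a bounded set $A\subset\mathbb{R}\times E_n$ contained in $[-T,T]\times B$ for some $T>0$ and bounded $B\subset E_n$. The flow sends bounded sets into bounded sets over bounded time intervals (standard consequence of the global Lipschitz bound), so the orbit set $\mathcal{O}:=\{x\cdot s:s\in[-T,T],\,x\in B\}$ is bounded in $E_n$. By the previous step $N(\mathcal{O})$ is then precompact in $E_n$, and since $\|e^{(t-s)L}\|_{\mathcal{L}(E_n,E_n)}\le e^{2T}$ for $s,t\in[-T,T]$, the set $\{e^{(t-s)L}N(x\cdot s):s,t\in[-T,T],\,x\in B\}$ remains precompact. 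Each $l(t,x)$ lies in $2T\cdot\overline{\mathrm{conv}}$ of this set, hence $l(A)$ is contained in a bounded subset of the (precompact) closed convex hull, which is itself precompact in the Hilbert space $E_n$. This gives compactness of $l$ and completes the proof.

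The only delicate step is the compactness of the pointwise Nemytski-type map $N$, but in the present finite-dimensional target setting it reduces cleanly to the compact Sobolev embedding $H^{\frac{1}{2}}(S^1,\mathbb{H}_n)\hookrightarrow L^2(S^1,\mathbb{H}_n)$; everything else is a direct application of Duhamel's formula.
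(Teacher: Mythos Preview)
Your proof is correct and follows exactly the route the paper indicates: the paper does not spell out a proof but refers to \cite{HZ94} and remarks just before the lemma that the representation follows from the variation of constants method applied to the decomposition of \eqref{rrr} into the linear part $x^- - x^+$ and the compact nonlinearity $\nabla_{\frac{1}{2}} b_{K_n}$, the latter being compact precisely because the embedding $T:H^{\frac{1}{2}}(S^1,\mathbb{H}_n)\hookrightarrow L^{2}(S^1,\mathbb{H}_n)$ is compact when $\mathbb{H}_n$ is finite dimensional. Your Duhamel formula, the factorisation $N=T^{*}\circ\nabla K_n\circ T$, and the closed convex hull argument for the integral term are exactly the standard ingredients of the Hofer--Zehnder proof.
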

\begin{oss}
The flow of \eqref{qqq} cannot be seen as a compact perturbation of a linear flow because the Sobolev's embedding $T:H^{\frac{1}{2}}(S^1,\mathbb{H}) \hookrightarrow L^2(S^1,\mathbb{H})$ is not compact when $\mathbb{H}$ is infinite dimensional (cfr. Example \ref{esempio}).
\end{oss}
Having a representation formula for the flow of \eqref{rrr}, the main ingredient to prove the linking lemma below is the Leray-Schauder degree, which is defined for maps which are compact perturbations of the identity.  
\begin{lem}\upshape{{\cite{HZ94} (Chapter 3, Lemma 10)}} 
\label{link}
Let $\varphi_t$ be the flow of $\dot{x}=- \nabla_\frac{1}{2} \mathcal{A}_{K_n}(x)$, then for $\alpha>0$ small enough and $\tau>0$ big enough we have 
\begin{align*}
\varphi_t (\Sigma_\tau^n)\cap \Gamma_\alpha^n \neq \emptyset \ \ \textrm{ for any } t \geq 0.
\end{align*}
\end{lem}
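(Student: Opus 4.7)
The plan is to carry out a Leray--Schauder degree argument on $\tilde\Sigma_\tau^n$, in direct analogy with the classical linking lemma of \cite{HZ94}. The key input that makes such an argument work in this infinite-dimensional loop space is Lemma \ref{fluss6}: since $l(t,\cdot)$ is compact, the flow of $\mathcal{A}_{K_n}$ is a compact perturbation of the hyperbolic linear semigroup $x \mapsto e^t x^- + x^0 + e^{-t} x^+$, which makes a degree-theoretic intersection computation feasible on $E_n$.

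Writing $x \in \Sigma_\tau^n$ as $x = \varphi(0) + y^- + y^0 + se^+$ with $(y^-, y^0, s) \in \tilde\Sigma_\tau^n$, I would reformulate $\varphi_t(x) \in \Gamma_\alpha^n$ as the simultaneous vanishing of the $E_n^-$- and $E_n^0$-components of $\varphi_t(x) - \varphi(0)$ together with $|P^+(\varphi_t(x) - \varphi(0))|_{1/2}^2 = \alpha^2$ (squaring avoids the non-smoothness of the norm). Substituting the representation formula, these three equations assemble into a continuous map
$$\Psi_t : \tilde\Sigma_\tau^n \longrightarrow E_n^- \oplus E_n^0 \oplus \mathbb{R},$$
whose linear part (after identifying the affine slice with $E_n^- \oplus E_n^0 \oplus \mathbb{R}$) is an isomorphism and whose nonlinear part, built from $l(t,\cdot)$, is compact. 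Hence the Leray--Schauder degree $\deg(\Psi_t, \operatorname{int} \tilde\Sigma_\tau^n, 0)$ is well-defined provided $\Psi_t \neq 0$ on $\partial \tilde\Sigma_\tau^n$.

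The crucial boundary non-vanishing follows from the two previous lemmas combined with the fact that $\mathcal{A}_{K_n}$ is non-increasing along its negative gradient flow: for every $x \in \partial \Sigma_\tau^n$,
$$\mathcal{A}_{K_n}(\varphi_t(x)) \leq \mathcal{A}_{K_n}(x) \leq 0 < \inf_{y \in \Gamma_\alpha^n} \mathcal{A}_{K_n}(y),$$
so $\varphi_t(x) \notin \Gamma_\alpha^n$ and $\Psi_t$ cannot vanish on the boundary for any $t \geq 0$. By homotopy invariance in $t$, the degree is then constant in $t$.

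At $t = 0$ the flow is the identity and $\Psi_0(y^-, y^0, s) = (y^-, y^0, s^2 - \alpha^2)$ has the unique zero $(0, 0, \alpha)$ in $\operatorname{int}\tilde\Sigma_\tau^n$ (choosing $0 < \alpha < \tau$), with non-zero local index. The constant degree is therefore non-zero for every $t \geq 0$, so $\Psi_t^{-1}(0) \neq \emptyset$, giving the desired intersection $\varphi_t(\Sigma_\tau^n) \cap \Gamma_\alpha^n \neq \emptyset$. The main subtlety I expect is verifying, uniformly in $t$ on compact intervals, that $\Psi_t$ really is a compact perturbation of a linear isomorphism; this uses the continuous dependence of $l$ on $t$ from Lemma \ref{fluss6} together with the compactness of the Sobolev embedding $H^{1/2}(S^1,\mathbb{H}_n) \hookrightarrow L^2(S^1,\mathbb{H}_n)$, which in turn relies crucially on the finite-dimensionality of $\mathbb{H}_n$.
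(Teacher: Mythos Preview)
Your approach is correct and is precisely the one the paper indicates (and that \cite{HZ94} carries out): use the representation of Lemma~\ref{fluss6} to write $\varphi_t$ as a compact perturbation of the linear semigroup, encode membership in $\Gamma_\alpha^n$ as the zero set of a map $\Psi_t$ on the affine slice $E_n^-\oplus E_n^0\oplus\mathbb{R}e^+$, rule out boundary zeros via the energy inequalities of the two preceding lemmas, and conclude by homotopy invariance of the Leray--Schauder degree from the explicit computation at $t=0$. The only cosmetic point is that the paper (and \cite{HZ94}) phrases the degree-theoretic input as ``compact perturbation of the \emph{identity}'' rather than of a linear isomorphism; since your linear part $(y^-,y^0,s)\mapsto(e^t y^-,y^0,s)$ is invertible, composing with its inverse reduces your $\Psi_t$ to the required form $I+\text{compact}$, so this is no obstacle.
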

\begin{oss}
By the previous remark, if $\mathbb{H}$ is infinite dimensional the gradient flow has no compactness property and this prevents the possibility of proving a linking lemma by means of the Leray-Schauder degree.
\end{oss}
We finally have all the ingredients to conclude the following.
%\begin{reppropo}{akc>0}
\begin{lem}
\label{succpticr}
For any fixed $n \in \mathbb{N}$ there exists a critical point $x_n$ of $\mathcal{A}_{K_n}:E_n \rightarrow \mathbb{R}$ such that $x_n(t) \in \varphi(B_r) \cap \mathbb{H}_n$ for any $t \in [0,1]$ and
\begin{align*}
 0 < \inf_{x \in \Gamma_\alpha} \mathcal{A}_{K}(x_n) \leq \mathcal{A}_{K_n} (x_n) \leq \sup _{x \in \Sigma_{\tau}} \mathcal{A}_{K}(x_n)< +\infty.
\end{align*}
\end{lem}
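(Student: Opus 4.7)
The plan is to apply the minimax lemma (Theorem \ref{m}) to $\mathcal{A}_{K_n}$ on the Hilbert space $E_n = H^{\frac{1}{2}}(S^1,\mathbb{H}_n)$, with the family $\mathcal{F}_n := \{\psi_t^n(\Sigma_\tau^n) : t \geq 0\}$, where $\psi_t^n$ denotes the negative $H^{\frac{1}{2}}$-gradient flow of $\mathcal{A}_{K_n}$. Since $\nabla K_n$ is bounded and globally Lipschitz, Lemma \ref{lemlip} ensures that $\nabla_{\frac{1}{2}}\mathcal{A}_{K_n}$ is globally Lipschitz on $E_n$, so the flow is global; the family $\mathcal{F}_n$ is positively invariant by construction. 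The linking Lemma \ref{link} yields $\psi_t^n(\Sigma_\tau^n) \cap \Gamma_\alpha^n \neq \emptyset$ for every $t \geq 0$, and using $\mathcal{A}_{K_n} = \mathcal{A}_K|_{E_n}$, $\Gamma_\alpha^n \subset \Gamma_\alpha$, together with the intermediate lemma giving $\inf_{\Gamma_\alpha}\mathcal{A}_K > 0$,
\[ c_n := c(\mathcal{A}_{K_n}, \mathcal{F}_n) \geq \inf_{\Gamma_\alpha^n}\mathcal{A}_{K_n} \geq \inf_{\Gamma_\alpha}\mathcal{A}_K > 0. \]
Taking $t=0$ as a competitor and using that $\Sigma_\tau$ is bounded in $E$ while $\mathcal{A}_K$ is continuous on bounded sets (Proposition \ref{diff} combined with the at-most-quadratic growth of $K$),
\[ c_n \leq \sup_{\Sigma_\tau^n}\mathcal{A}_{K_n} \leq \sup_{\Sigma_\tau}\mathcal{A}_K < +\infty. \]

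The Palais--Smale condition at $c_n$ is verified by the standard Fredholm-type argument: although $E_n$ is infinite-dimensional, the map $\nabla_{\frac{1}{2}} b_{K_n} = T^* \nabla K_n$ is compact because $T: H^{\frac{1}{2}}(S^1,\mathbb{H}_n) \hookrightarrow L^2(S^1,\mathbb{H}_n)$ is compact (target $\mathbb{H}_n$ finite-dimensional). A preliminary boundedness estimate on any $\mathrm{(PS)}^{c_n}$ sequence $\{x_k\}$, obtained by testing $\nabla_{\frac{1}{2}}\mathcal{A}_{K_n}(x_k) \to 0$ against $x_k^+ - x_k^-$ and exploiting the quadratic-at-infinity structure $K_n = \mu q$, together with the hyperbolic decomposition $\nabla_{\frac{1}{2}} a(x) = (P^+-P^-)x$, upgrades weak to strong convergence. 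Theorem \ref{m} then produces a critical point $x_n \in E_n$ with $\mathcal{A}_{K_n}(x_n) = c_n$, which satisfies the action bounds in the statement; the inclusion $x_n(t) \in \mathbb{H}_n$ is automatic.

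It remains to pin the orbit inside $\varphi(B_r)$. Being a critical point of the autonomous functional, $x_n$ is a $1$-periodic solution of $\dot x = J\nabla K_n(x)$, so $K_n(x_n(t)) \equiv k_n$ is constant. I argue by contradiction, supposing $k_n \geq m$. Since $K \leq m$ on $\{q<1\}$ while $\rho(q) \geq m$ on $\{q\geq 1\}$, the orbit lies entirely in $\{q \geq 1\}$, where $K_n = \rho\circ q$; a direct check shows $q$ is preserved along the flow, hence $q(x_n(t)) \equiv q_0 \geq 1$, and the equation reduces to the linear system $\dot x_n = \lambda J L x_n$ with $\lambda := 2\rho'(q_0) \in [0, 2\mu]$ and $L$ the diagonal operator associated to $q$.

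The main obstacle is then the frequency analysis on this linear system: the orbit decouples into planar rotations with angular velocities $\lambda$ in $\langle e_1, f_1\rangle$ and $\lambda/N^2$ in each $\langle e_i, f_i\rangle$ for $i \geq 2$. Since $\lambda \leq 2\mu < 4\pi$ and $N$ is taken large in \eqref{hplambda}, the $1$-periodicity constraint rules out any non-zero component with $i \geq 2$ and, in the non-degenerate case, pins $\lambda = 2\pi$ on the first plane (the degenerate case $\lambda = 0$ gives a constant orbit on $\{q = 1\}$ with action $-m < 0$). A direct computation of the action on the resulting circular orbit of radius $\sqrt{q_0}$ in $\langle e_1, f_1\rangle$ gives
\[ \mathcal{A}_{K_n}(x_n) = \pi q_0 - \rho(q_0) \leq (\pi - \mu) q_0 < 0, \]
contradicting $c_n > 0$. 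Hence $k_n < m$, and since $\{K < m\} = \{H < m\} \subset \varphi(B_r)$ by \eqref{hplambda}, the orbit lies in $\varphi(B_r)$ as required. The delicate step is this frequency analysis, which hinges crucially on the tuning $\pi < \mu < 2\pi$ together with $N$ large in the construction of $K$.
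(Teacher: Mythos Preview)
Your proof is correct and follows the same route as the paper's: the minimax lemma applied to the family $\mathcal{F}_n=\{\psi_t^n(\Sigma_\tau^n)\}$, the linking lemma for the lower bound, finite-dimensional compactness of $T$ for the (PS) condition, and the frequency analysis on the quadratic region for the support claim. The paper simply outsources the last two steps to \cite{HZ94} (Chapter~3, Lemma~6 and Proposition~2), whereas you spell them out; the only cosmetic imprecision is that the borderline case $k_n=m$ is not quite the case $\lambda=0$ on $\{q=1\}$ but rather a constant orbit on the whole level set $\{K=m\}$ (where $\nabla K=0$), which still gives action $-m<0$.
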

%\end{reppropo}
\proof
For any fixed $n\in \mathbb{N}$ the equation \eqref{rrr} defines a global flow $\varphi_t$ on $E_n$ and the family of sets 
\begin{align*}
\mathcal{F}_n := \{ \varphi_t ( \Sigma_{\tau}^n) \ \big{|} \ t\geq 0\},
\end{align*}
is clearly positively invariant under the flow $\varphi_t$.\\
The value 
\begin{align*}
c(\mathcal{A}_{K_n},\mathcal{F}_n):= \inf_{t \geq 0} \sup _{x \in \varphi_t ( \Sigma_{\tau}^n)} \mathcal{A}_{K_n}(x)
\end{align*}
is finite, indeed $\mathcal{A}_{K_n} (\varphi_t(x))$ is a decreasing function of $t$ and $\mathcal{A}_{K}$ is bounded on bounded sets, thus
\begin{align*}
\inf_{t \geq 0} \sup _{x \in \varphi_t ( \Sigma_{\tau}^n)} \mathcal{A}_{K_n}(x) \leq \sup _{x \in \Sigma_{\tau}^n} \mathcal{A}_{K_n}(x) \leq \sup _{x \in \Sigma_{\tau}} \mathcal{A}_{K}(x)< +\infty.
\end{align*} 
Moreover Lemma \ref{link} implies that if $\tau$ is big enough then 
\begin{align*}
\sup _{x \in \varphi_t ( \Sigma_{\tau}^n)} \mathcal{A}_{K_n}(x) \geq  \inf_{x \in \Gamma_\alpha^n} \mathcal{A}_{K_n}(x) \geq  \inf_{x \in \Gamma_\alpha} \mathcal{A}_{K}(x)>0
\end{align*}
for any $t \geq0$, hence
\begin{align*}
c(\mathcal{A}_{K_n},\mathcal{F}_n) =  \inf_{t \geq 0} \sup _{x \in \varphi_t ( \Sigma_{\tau}^n)} \mathcal{A}_{K_n}(x) > 0.
\end{align*}
We can therefore apply the minimax lemma (Theorem \ref{m}) and, for any fixed $n$, we get a (PS)$^c$ sequence $\{x_n ^k\}_{k \in \mathbb{N}}\in H^{\frac{1}{2}}(S^1,\mathbb{H}_n)$. Since $\mathbb{H}_n$ is finite dimensional, one can show that the functional $\mathcal{A}_{K_n}$ satisfy the (PS) condition (see Lemma 6, Chapter 3 of \cite{HZ94}), therefore for any fixed $n$ we can find a critical point $x_n$ whose critical level is positive. Moreover it is not hard to see that any $x_n$ is supported in $\varphi(B_r) \cap \mathbb{H}_n$ (see Proposition 2, Chapter 3 of \cite{HZ94}).
\endproof
A critical point of $\mathcal{A}_{K_n}:E_n \rightarrow \mathbb{R}$ at level $c_n$ is a solution $x_n$ of
\begin{align*}
\left\lbrace \begin{array}{l}
\dot{x}_n = J\nabla K_n (x_n)\\
\mathcal{A}_{K_n}(x_n)=c_n,
\end{array} \right.
\end{align*} 
hence if we apply Lemma \ref{succpticr} we can find a sequence $\{x_n\}_{n \in \mathbb{N}}$ of loops supported in $\varphi(B_r)$ such that 
\begin{align}
\label{ffff}
\left\lbrace \begin{array}{l}
\dot{x}_n = J\nabla K_n (x_n)\\
 0 < \delta \leq \mathcal{A}_{K_n}(x_n)\leq \Delta < + \infty.
   \end{array} \right.
\end{align} 
It is now time to use the properties of approximation schemes in order to show that the existence of a sequence $\{x_n\}_{n \in \mathbb{N}}$ as in \eqref{ffff} implies the existence of a (PS)$^c$ sequence for $\mathcal{A}_F$ at positive level. 
\begin{prop}
Let $(\mathbb{H},\omega)$ be a symplectic Hilbert space endowed with a compatible inner product and $\{P_n\}_{n \in \mathbb{N}}$ the approximation scheme of orthogonal projections onto finite dimensional symplectic subspaces.
Let $\varphi \in Symp (\mathbb{H},\omega)$ be a symplectic diffeomorphism such that $D \varphi$, $D \varphi^{-1}$ are bounded and 
\begin{enumerate}[i)]
\item $(I-P_n) {\varphi^{-1}}_{\vert P_n \mathbb{H}} \underset{n \rightarrow + \infty} \longrightarrow 0$ uniformly on bounded sets,\\
\item $[P_n , D \varphi^{-1} (x)^*]  \underset{n \rightarrow + \infty} \longrightarrow 0$ in operators' norm, uniformly in $x \in \mathbb{H}$ on bounded sets.
\end{enumerate}
If $\{y_n\}_{n \in \mathbb{N}}$ is a sequence of critical points for $\mathcal{A}_{K_n}:E_n\rightarrow \mathbb{R}$ as in \eqref{ffff}, then the sequence $\{x_n\}_{n \in \mathbb{N}}$ of loops $x_n:=\varphi^{-1} (y_n)$ admits a \emph{(PS)}$^c$ subsequence at positive level for the Hamiltonian action functional $\mathcal{A}_{F}:E \rightarrow \mathbb{R}$.
 \label{propns}
\end{prop}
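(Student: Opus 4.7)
The strategy is to recognize that $y_n$ is a critical point in the finite-dimensional space $E_n$ but not in the ambient space $E$, to quantify the resulting defect via the admissibility of $\varphi$, and then to transport the resulting almost-critical information back to $x_n=\varphi^{-1}(y_n)$ by the argument of Proposition \ref{PSinv}. I would first record that $\mathcal{A}_F(x_n)=\mathcal{A}_{K_n}(y_n)\in[\delta,\Delta]$: since $y_n$ takes values in $\varphi(B_r)\subset\{q<1\}$, we have $K(y_n(t))=H(y_n(t))=F(x_n(t))$ pointwise, while exactness of $\varphi^*\lambda-\lambda$ equates the symplectic parts exactly as in the derivation of \eqref{f}. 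Passing to a subsequence, $\mathcal{A}_F(x_n)\to c$ for some $c\in[\delta,\Delta]$. Moreover $\dot{y}_n=JP_n\nabla K(y_n)$, together with the boundedness of $\nabla K$ on $\varphi(B_r)$, yields a uniform $H^1$-bound on $y_n$, and hence on $x_n$ through the boundedness of $D\varphi^{-1}$.

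Letting $\mathcal{P}_n:E\to E_n$ denote the pointwise projector induced by $P_n$, the chain rule combined with the symmetry of $P_n$ gives $\nabla^{E_n}_{1/2}b_{K_n}(y)=\mathcal{P}_n\nabla^E_{1/2}b_K(y)$ for $y\in E_n$, so the critical-point equation in $E_n$ reads $(P^+-P^-)y_n=\mathcal{P}_n\nabla_{1/2}b_K(y_n)$. Consequently
\begin{align*}
\nabla_{1/2}\mathcal{A}_K(y_n)=(P^+-P^-)y_n-\nabla_{1/2}b_K(y_n)=-(I-\mathcal{P}_n)\nabla_{1/2}b_K(y_n).
\end{align*}
Since $\mathcal{P}_n$ commutes with the operator $T^*$ of Lemma \ref{lemgrad} and $\nabla_{1/2}b_K(y)=T^*\nabla K(y)$, the bound \eqref{lll} delivers
\begin{align*}
|\nabla_{1/2}\mathcal{A}_K(y_n)|_{1/2}\le|(I-P_n)\nabla K(y_n)|_{L^2}.
\end{align*}

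The crux of the argument, and the main obstacle, is showing that this right-hand side vanishes, and this is precisely where both admissibility conditions on $\varphi$ enter. On the range of $y_n$ we have $\nabla K(y)=D\varphi^{-1}(y)^T\nabla F(\varphi^{-1}(y))$ with $\nabla F(x)=2g'(|x|^2)x$, and $y_n(t)\in P_n\mathbb{H}$. Writing $(I-P_n)A=A(I-P_n)-[P_n,A]$ with $A=D\varphi^{-1}(y_n(t))^T$ yields
\begin{align*}
(I-P_n)\nabla K(y_n(t))=2g'(|x_n(t)|^2)D\varphi^{-1}(y_n(t))^T(I-P_n)\varphi^{-1}(y_n(t))-[P_n,D\varphi^{-1}(y_n(t))^T]\nabla F(x_n(t)).
\end{align*}
The commutator term converges uniformly to zero by the commutator condition in the definition of $Symp_a$ (with $\nabla F$ bounded on $B_r$), while the first term vanishes uniformly because the other admissibility condition $(I-P_n)\varphi^{-1}|_{P_n\mathbb{H}}\to 0$ applies to the bounded family $\{y_n(t):t\in S^1,\ n\in\mathbb{N}\}\subset\varphi(B_r)\cap P_n\mathbb{H}$, with $D\varphi^{-1}$ and $g'$ bounded. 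Hence $|(I-P_n)\nabla K(y_n)|_{L^\infty}\to 0$, and so $\nabla_{1/2}\mathcal{A}_K(y_n)\to 0$ in $E$.

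To conclude, set $\tilde F:=K\circ\varphi$, which agrees with $F$ on $B_r$; since $x_n$ takes values in $B_r$, the pointwise $L^2$-gradients $\nabla_{L^2}\mathcal{A}_F(x_n)$ and $\nabla_{L^2}\mathcal{A}_{\tilde F}(x_n)$ coincide, and therefore $\nabla_{1/2}\mathcal{A}_F(x_n)=\nabla_{1/2}\mathcal{A}_{\tilde F}(x_n)$ after applying $T^*$. Now $\mathcal{A}_{\tilde F}(x)=\mathcal{A}_K(\varphi_*(x))$ holds globally on $E$, so the differentiation argument of Proposition \ref{PSinv}, legitimate in view of the $H^1$-bound on $x_n$ and Lemma \ref{lemreg}, gives
\begin{align*}
\nabla_{1/2}\mathcal{A}_F(x_n)=D\varphi_*(x_n)^T\nabla_{1/2}\mathcal{A}_K(y_n),
\end{align*}
which tends to zero in $E$ since $|D\varphi_*(x_n)^T|_{\mathcal{L}(E,E)}$ is uniformly bounded. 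Combined with $\mathcal{A}_F(x_n)\to c>0$, this exhibits the desired (PS)$^c$ subsequence at positive level.
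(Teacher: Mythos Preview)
Your argument is correct and the use of the admissibility conditions i) and ii) is essentially the same commutator manipulation as in the paper, but the overall architecture differs. The paper works entirely at the level of the $L^2$-gradient of $\mathcal{A}_F$: it computes $\dot x_n = D\phi(y_n)\dot y_n$ pointwise, uses the symplecticity relation $D\phi\,J = J\,D\phi^{-*}$ to rewrite this as $J\nabla F(x_n)$ plus two error terms governed precisely by i) and ii), and reads off $\nabla_{L^2}\mathcal{A}_F(x_n)\to 0$ directly. You instead factor the argument through $\mathcal{A}_K$: you first show that the finite-dimensional critical point $y_n$ is an honest (PS) sequence for $\mathcal{A}_K$ on the full space $E$ (the defect $(I-\mathcal{P}_n)\nabla_{1/2}b_K(y_n)$ being killed by i) and ii)), and then transport this to $\mathcal{A}_F$ via the differentiation step of Proposition~\ref{PSinv}. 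This is conceptually pleasant because it separates the ``finite-dimensional approximation defect'' from the ``symplectic transport'' and plugs neatly into the machinery of Section~\ref{sec2}. The price is that the transport step needs $D\varphi_*(x_n)$ to be a bounded operator on $E$, and Lemma~\ref{lemreg} delivers this only under boundedness of $D^2\varphi$, which is \emph{not} among the hypotheses of the proposition as stated (only $D\varphi$ and $D\varphi^{-1}$ are assumed bounded). The paper's direct pointwise computation avoids this, proving the proposition under its minimal hypotheses; your route proves it under the slightly stronger $Symp_a$ hypotheses, which of course suffice for the theorem.

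One small inaccuracy: the inclusion $\varphi(B_r)\subset\{q<1\}$ is not true in general; what is true (and what both you and the paper actually use) is that the orbits $y_n$ stay in the region where $K=H=F\circ\varphi^{-1}$, which follows from the Hofer--Zehnder localisation argument cited in Lemma~\ref{succpticr}.
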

\proof
Let us set $\phi := \varphi^{-1}$.
For any $n \in \mathbb{N}$, if we consider the inclusion $i_n:P_n \mathbb{H} \rightarrow \mathbb{H}$ and the Hamiltonian $K_n:= K \circ i_n$, we get
\begin{align}
\nabla K_n (z)= (D i_n)^* \nabla K (z) = P_n \nabla K (z),
\end{align}
for any $z \in \mathbb{H}$. Therefore for any critical point $y_n$ of $\mathcal{A}_{K_n}$ it holds
\begin{align}
\label{qualc}
\dot{y}_n(t) = J\nabla K_n (y_n(t))= JP_n \nabla K (y_n(t)).
\end{align} 
We recall that 
\begin{align*}
K_{| \varphi(B_r)}= F \circ \phi: \varphi(B_r) \rightarrow \mathbb{R}
\end{align*}
with $F$ radial Hamiltonian and that the loops $y_n$ are supported in $\varphi(B_r) \cap P_n \mathbb{H}$.
Using \eqref{qualc} together with the fact that $\phi$ is symplectic, for the loops $x_n= \phi(y_n)$ we compute
\begin{align}
\label{abcde}
\begin{split}
\dot{x}_n(t) &= D \phi (y_n(t)) \dot{y}_n(t)=D \phi (y_n(t)) J P_n \nabla K(y_n(t))\\ &= J D \phi (y_n(t))^{-*} P_n D \phi (y_n(t))^* \nabla F(x_n(t))\\
&= J D \phi (y_n(t))^{-*} D \phi (y_n(t))^* P_n \nabla F(x_n(t)) +\\
& \ \ \ +J D \phi (y_n(t))^{-*} [P_n , D \phi (y_n(t))^*] \nabla F(x_n(t))\\
&=J P_n \nabla F(x_n(t)) + J D \phi (y_n(t))^{-*} [P_n , D \phi (y_n(t))^*] \nabla F(x_n(t)).
\end{split}
\end{align}
The gradient of $F:\mathbb{H} \rightarrow \mathbb{R}$ writes as
\begin{align*}
\nabla F(z)= 2 g' (\|z\|^2) z,
\end{align*}
thus we get
\begin{align*}
\nabla F(x_n(t)) &= 2 g' (\|x_n(t)\|^2) x_n(t) =  2 g' (\|x_n(t)\|^2) \phi(y_n(t))
\end{align*}
%by assumption ii)
%with $\epsilon_n: \mathbb{H} \rightarrow \mathbb{H}$ such that $\| \epsilon_n (z) \| \leq \frac{1}{n}$ for any $z \in \varphi(B_r)$.\\
hence
\begin{align}
\label{lalala}
 P_n \nabla F(x_n(t))=\nabla F(x_n(t))-(I-P_n)2 g' (\|x_n(t)\|^2) \phi(y_n(t)),
\end{align}
for any $t\in [0,1]$.\\
By substituting \eqref{lalala} we can rewrite \eqref{abcde} as 
\begin{align*}
\dot{x}_n(t)&=  J \big(  \nabla F(x_n(t)) -  (I-P_n)2 g' (\|x_n(t)\|^2) \phi(y_n(t)) \big) -\\ 
& \ \ \ -J D \phi (y_n(t))^{-*}  [P_n , D \phi (y_n(t))^*] \nabla F(x_n(t)),
\end{align*}
which, since 
\begin{align*}
\nabla_{L^2} \mathcal{A}_F (x_n) = -\big(J\dot{x}_n + \nabla F(x_n)\big),
\end{align*}
is equivalent to
\begin{align*}
-\nabla_{L^2} \mathcal{A}_{F}(x_n) =(I-P_n)2 g' (\|x_n(t)\|^2) \phi(y_n(t)) +  D \phi (y_n(t))^{-*}  [P_n , D \phi (y_n(t))^*] \nabla F(x_n(t)).
\end{align*} 
Since $y_n(t) \in \varphi(B_r) \cap P_n \mathbb{H}$, using assumption i) we get
\begin{align*}
(I-P_n)2 g' (\|x_n(t)\|^2) \phi(y_n(t)) \underset{n \rightarrow + \infty} \longrightarrow 0 \textrm{ uniformly in } t,
\end{align*}
moreover, by assumption ii) for any $t \in [0,1]$ we have 
\begin{align*}
 [P_n , D \phi (y_n(t))^*]  \underset{n \rightarrow + \infty} \longrightarrow 0 \ \ \textrm{ in the operators' norm}.
\end{align*}
Thus applying the dominated convergence theorem we deduce
\begin{align*}
\nabla_{L^2} \mathcal{A}_{F}(x_n) \underset{n \rightarrow + \infty} \longrightarrow 0  \ \ \textrm{ in } L^2(S^1,\mathbb{H}).
\end{align*}
Finally, because of the inequality
\begin{align*}
|\nabla_{\frac{1}{2}} \mathcal{A}_{F}(x_n)|_{\frac{1}{2}} =|T^*\nabla_{L^2} \mathcal{A}_{F}(x_n)|_{\frac{1}{2}} \leq | \nabla_{L^2} \mathcal{A}_{F}(x_n)|_{L^2},
\end{align*}
we get
\begin{align*}
\nabla_{\frac{1}{2}} \mathcal{A}_{F}(x_n) \underset{n \rightarrow + \infty} \longrightarrow 0  \ \ \textrm{ in }  H^{\frac{1}{2}}(S^1,\mathbb{H}).
\end{align*}
To find the sought (PS) sequence for $\mathcal{A}_F$ we need a small last step.\\
By construction of the sequence $\{y_n \}_{n \in \mathbb{N}}$ (cfr. \eqref{ffff}), we have that
\begin{align*}
0 < \delta \leq \mathcal{A}_{K_n}(y_n) \leq \Delta < + \infty.
\end{align*}
with $\delta,\Delta \in \mathbb{R}$ independent on $n$.\\
The symplectic action of closed orbits is preserved under symplectomorphisms, hence 
\begin{align*}
0 < \delta \leq \mathcal{A}_{F}(x_n) = \mathcal{A}_{K_n}(y_n) \leq \Delta < + \infty.
\end{align*}
Thus we can find a subsequence $\{ x_{n_k}\}_{n_k \in \mathbb{N}}$ such that $\mathcal{A}_{F}(x_{n_k}) \underset{n \rightarrow + \infty} \longrightarrow c>0$, namely the (PS)$^c$ sequence we were looking for.
\endproof 
The non-squeezing theorem we just proved generalizes the one obtained in \cite{Kuk95a} which applies to the family of so called elementary symplectomorphisms.
We do not give the rather technical definition of elementary symplectomorphism, but we remark that any such symplectomorphism is a compact perturbation of a linear map, which satisfies the following properties.
\begin{lem}\upshape{{\cite{Kuk95a} (Lemma 3)}} 
\label{lem3}
Let $(\mathbb{H},\omega)$ be a Hilbert symplectic space endowed with a compatible inner product $\langle \cdot, \cdot \rangle$, $B_r$ the ball centred in $0$ with radius $r$ and $\varphi:B_r \rightarrow \mathbb{H}$ be an elementary symplectomorphism.
Then for any $\epsilon>0$ and $r < + \infty$ there exists a natural number $n$ such that 
\begin{align}
\label{simplcpt}
\varphi(x)=L (I + \varphi_\epsilon) (I + \varphi_n)(x)
\end{align}
for every $x \in B_r$, where  $L:\mathbb{H}\rightarrow \mathbb{H}$ is a direct sum of rotations in the symplectic planes spanned by $\{e_i,f_i\}$, while $I + \varphi_\epsilon: \mathbb{H}\rightarrow \mathbb{H}$ and $I + \varphi_n: \mathbb{H}\rightarrow \mathbb{H}$ are smooth symplectomorphisms such that 
\begin{align*}
\|\varphi_\epsilon (y)\| \leq \epsilon \ \ \textrm{ for any } \ y \in (I + \varphi_n)B_r,
\end{align*}
and 
\begin{align*}
\varphi_n (P_n x, (I-P_n)x) = (\varphi_n^0 (P_n x), (I-P_n)x).
\end{align*}
\end{lem}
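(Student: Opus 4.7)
The plan is to exploit the fact that an elementary symplectomorphism is, by definition, a compact perturbation of a linear symplectic map, and then to perform a finite-dimensional symplectic approximation via generating functions adapted to the splitting $\mathbb{H}=P_n\mathbb{H}\oplus(I-P_n)\mathbb{H}$.

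First I would factor out the linear part. By the structure of elementary symplectomorphisms, there exists a linear symplectic map $L:\mathbb{H}\to\mathbb{H}$ which is a direct sum of rotations in the symplectic planes $\{e_i,f_i\}$ such that $\psi:=L^{-1}\varphi=I+K$ on $B_r$ with $K$ compact. The image $K(B_r)$ is then precompact in $\mathbb{H}$, and since the orthogonal projections $P_n$ in \eqref{proiez} converge strongly to $I$, a standard argument on precompact sets yields $\|(I-P_n)K(x)\|\to 0$ uniformly for $x\in B_r$.

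Next I would construct the finite-dimensional truncation $I+\varphi_n$. The naive coordinate-freezing map $x\mapsto P_n\psi(P_n x)+(I-P_n)x$ is not symplectic, so to obtain the block-triangular form $\varphi_n(P_n x,(I-P_n)x)=(\varphi_n^0(P_n x),(I-P_n)x)$ of a genuine symplectomorphism, I would use a generating function $S:P_n\mathbb{H}\times P_n\mathbb{H}\to\mathbb{R}$ for the $P_n$-truncation of $\psi$ and extend it trivially on $(I-P_n)\mathbb{H}$. By shrinking the domain or appealing to the compactness of $K$, one checks that for $n$ large enough this generating-function construction is well defined and yields a smooth symplectomorphism of $\mathbb{H}$. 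Finally, one sets $\varphi_\epsilon:=(I+\varphi_n)^{-1}\psi-I$; by construction the identity $\varphi=L(I+\varphi_\epsilon)(I+\varphi_n)$ holds on $B_r$, and using the uniform bound on $D(I+\varphi_n)^{-1}$ together with the uniform smallness of $(I-P_n)K$ on $B_r$, one obtains $\|\varphi_\epsilon(y)\|\le\epsilon$ for every $y\in(I+\varphi_n)B_r$, provided $n$ has been chosen large enough.

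The main obstacle is not the $\epsilon$-smallness estimate, which follows once the decomposition is set up, but rather the preservation of symplecticity through the truncation. The naive cut-off destroys the symplectic form off the diagonal blocks, and repairing it requires a generating-function technique compatible with the splitting $P_n\mathbb{H}\oplus(I-P_n)\mathbb{H}$. The technical definition of \emph{elementary symplectomorphism} used by Kuksin is presumably tailored precisely so that this generating function exists and depends smoothly on the parameters; for this reason the cleanest route is to work inside Kuksin's framework rather than attempting to prove the statement in full generality, which is why the proof is merely cited from \cite{Kuk95a}.
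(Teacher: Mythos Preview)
The paper does not prove this lemma at all: it is quoted verbatim from \cite{Kuk95a} (Lemma~3), and the surrounding text explicitly says that the ``rather technical definition of elementary symplectomorphism'' is not reproduced. So there is no proof in the paper to compare your proposal against; your last paragraph already recognises this.

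Your sketch is a plausible heuristic for why such a decomposition should exist --- factor out the linear rotation $L$, note that the remaining map is a compact perturbation of the identity so that $(I-P_n)K\to 0$ uniformly on $B_r$, and then repair the non-symplecticity of the naive truncation via a generating function on $P_n\mathbb{H}$. However, the argument cannot be made rigorous without Kuksin's precise definition: the existence and smoothness of the generating function, the fact that $I+\varphi_n$ is a global symplectomorphism of $\mathbb{H}$ rather than a local one, and the uniform bound on $D(I+\varphi_n)^{-1}$ needed for the $\epsilon$-estimate all rely on structural hypotheses that are encoded in that definition. Absent those hypotheses your second step is only an outline, not a proof; for the purposes of this paper the correct thing to do is exactly what the author does, namely cite \cite{Kuk95a}.
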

\begin{lem}\upshape{{\cite{Kuk95a} (Lemma 6)}}\label{lem6}
Let $U\subset \mathbb{H}$ be a bounded open set and $\varphi:U \rightarrow \mathbb{H}$ be an elementary symplectomorphism such that $\varphi^{-1}:\varphi(U) \rightarrow \mathbb{H}$ is bounded, then $\varphi^{-1}$ is also an elementary symplectomorphism.
\end{lem}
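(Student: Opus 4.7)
The plan is to establish the inverse decomposition by inverting the structural representation of $\varphi$ provided by Lemma \ref{lem3}. Since $\varphi^{-1}:\varphi(U) \rightarrow \mathbb{H}$ is bounded by assumption, there exists $R > 0$ such that $\varphi^{-1}(\varphi(U)) \subset B_R$; in particular $U \subset B_R$. Applying Lemma \ref{lem3} to $\varphi$ on $B_R$, for every $\epsilon > 0$ we obtain $n \in \mathbb{N}$ and a factorisation $\varphi = L(I + \varphi_\epsilon)(I + \varphi_n)$ on $B_R$, with $L$ a direct sum of rotations in the symplectic planes spanned by $\{e_i, f_i\}$, with $\|\varphi_\epsilon\|$ uniformly small on $(I + \varphi_n)B_R$, and with $\varphi_n$ acting as the identity off the $2n$-dimensional subspace $P_n \mathbb{H}$.

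The first step is to invert each factor separately. The rotation $L$ inverts to another direct sum of rotations $L^{-1}$ in the same symplectic planes. The map $I + \varphi_n$ preserves the splitting $P_n \mathbb{H} \oplus (I - P_n)\mathbb{H}$, so its inverse $I + \psi_n$ has the same structure, with $\psi_n^0$ the inverse of $\varphi_n^0$ on the finite-dimensional factor; it inherits smoothness from $\varphi$ via the inverse function theorem applied on the finite-dimensional piece. For $I + \varphi_\epsilon$, one writes $(I + \varphi_\epsilon)^{-1} = I + \psi_\epsilon$ with $\psi_\epsilon = -\varphi_\epsilon \circ (I + \varphi_\epsilon)^{-1}$, and since $\|\varphi_\epsilon\| \leq \epsilon$ uniformly and $D\varphi_\epsilon$ is controlled by the bounds on $D\varphi$ and $DL^{-1}$, a fixed-point argument gives that $\|\psi_\epsilon\| \leq C\epsilon$ uniformly on the relevant bounded region, and $I+\psi_\epsilon$ is smooth symplectic.

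The second step is to put the composition
\begin{align*}
\varphi^{-1} = (I + \psi_n)(I + \psi_\epsilon)L^{-1}
\end{align*}
into the canonical form of Lemma \ref{lem3}, which requires the ``small'' factor to appear after the ``finite-rank'' factor. This is the main technical point: one must commute $I + \psi_\epsilon$ past $I + \psi_n$ up to an error that remains controlled by $\epsilon$. Write $(I + \psi_n)(I + \psi_\epsilon) = (I + \tilde\psi_\epsilon)(I + \psi_n) + E$ where $E$ vanishes when $\psi_\epsilon = 0$ and, by the mean value theorem together with the bound $\|D\psi_n\| \leq C'$ on $B_R$, one gets $\|E\| \leq C''\epsilon$. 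Absorbing $E$ into $\tilde\psi_\epsilon$ (after possibly increasing $n$ to keep $\psi_n$ supported on the same subspace), one arrives at $\varphi^{-1} = L^{-1}(I + \tilde\psi_\epsilon')(I + \psi_n')$ on $\varphi(U)$, which is the required form.

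The main obstacle I expect is precisely the bookkeeping in the commutation step: the rearranged ``small'' perturbation has size $O(\epsilon)$ but may no longer be defined on the full domain $\varphi(U)$ without a further shrinking or enlargement of $n$, so one has to be careful that the choice of $n$ can be made uniform for each fixed $\epsilon$. A secondary subtlety is verifying that the rotational part $L^{-1}$ still acts by symplectic rotations in the \emph{same} planes $\{e_i, f_i\}$ after the rearrangement; this follows because commuting with the finite-rank perturbation only modifies components in $P_n \mathbb{H}$, which can be reabsorbed into $\psi_n'$. Once these points are settled, the decomposition satisfies the defining properties of an elementary symplectomorphism, completing the proof.
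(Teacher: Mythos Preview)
The paper does not supply its own proof of this lemma: it is quoted verbatim from \cite{Kuk95a} (Lemma 6) and used as a black box in the corollary that follows. There is therefore nothing in the present paper to compare your argument against.

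That said, your sketch has a structural gap worth flagging. You take the factorisation of Lemma~\ref{lem3} as if it were the \emph{definition} of an elementary symplectomorphism, and then try to show that $\varphi^{-1}$ admits an analogous factorisation. But the paper explicitly declines to give Kuksin's definition (``We do not give the rather technical definition of elementary symplectomorphism''), and Lemma~\ref{lem3} is stated only as a necessary condition. So even if your commutation argument goes through, you would still owe an argument that a map admitting such a decomposition is elementary in Kuksin's sense. The commutation step itself is also only heuristic as written: you absorb the error $E$ into $\tilde\psi_\epsilon$ and simultaneously allow yourself to ``possibly increase $n$'', but increasing $n$ changes $\psi_n$ and hence $E$, so one has to check this does not become circular. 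To turn this into an actual proof you would need to consult Kuksin's original definition and verify it directly for $\varphi^{-1}$, which is what \cite{Kuk95a} does.
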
 
Using the two lemmata above and the infinite dimensional non-squeezing theorem, we deduce the following.
\begin{cor}[Kuksin's infinite dimensional non-squeezing]
Let $(\mathbb{H},\omega)$ be a Hilbert symplectic space endowed with a compatible inner product $\langle \cdot, \cdot \rangle$, $B_r$ the ball centred in $0$ with radius $r$ and $Z_R$ a cylinder whose basis lays on a symplectic plane and has symplectic area $\pi R^2$. Let $\varphi:B_r \rightarrow \mathbb{H}$ be an elementary symplectomorphism such that the differentials of $\varphi$ and $\varphi^{-1}$ are bounded up to the second order; if $\varphi(B_r) \subset Z_R$ then $r\leq R$.
\end{cor}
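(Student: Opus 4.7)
The plan is to verify that $\varphi$ meets the hypotheses of the infinite dimensional non-squeezing theorem, i.e.\ that it belongs to $Symp_a(\mathbb{H},\omega,\langle\cdot,\cdot\rangle)$, and then invoke that theorem. By hypothesis $D\varphi^{\pm 1}$ and $D^2\varphi^{\pm 1}$ are bounded, and by Lemma \ref{lem6} the inverse $\varphi^{-1}$ is also elementary, so the argument can be carried out symmetrically on $\varphi$ and $\varphi^{-1}$. One should either extend $\varphi$ to a global symplectomorphism by gluing it with its linear part $L$ outside a slightly larger ball, or simply observe that the proof of the main theorem only uses admissibility on bounded subsets of $B_r$ and of its image.

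The key ingredient is Lemma \ref{lem3}. Fix a bounded set $\Omega\subset\mathbb{H}$ and $\epsilon>0$; for $n=n(\Omega,\epsilon)$ large enough one has on $\Omega$ the factorization $\varphi = L\circ(I+\varphi_\epsilon)\circ(I+\varphi_n)$, where $L$ preserves each symplectic plane spanned by $\{e_i,f_i\}$ (and hence commutes with every $P_m$), the map $(I+\varphi_n)$ preserves the splitting $P_n\mathbb{H}\oplus(I-P_n)\mathbb{H}$ and is the identity on the second summand, and $\|\varphi_\epsilon\|_{C^0}\leq\epsilon$. For the first admissibility condition, if $m\geq n$ and $x\in P_m\mathbb{H}\cap\Omega$, then $(I+\varphi_n)(x)\in P_m\mathbb{H}$ because $P_n\mathbb{H}\subset P_m\mathbb{H}$ and the map acts trivially on $(I-P_n)\mathbb{H}$; composing with $L$ keeps us in $P_m\mathbb{H}$, so $\|(I-P_m)\varphi(x)\| = \|(I-P_m)\varphi_\epsilon(L(I+\varphi_n)(x))\|\leq\epsilon$, and letting $\epsilon\to 0$ yields the required uniform convergence. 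For the commutator condition I would differentiate the same factorization via the chain rule, observing that both $L^*$ and $D(I+\varphi_n)(x)^*$ commute with $P_m$ for $m\geq n$ (the latter because $D(I+\varphi_n)(x)$ is block-diagonal on $P_n\mathbb{H}\oplus(I-P_n)\mathbb{H}$), so $[P_m,D\varphi(x)^*]$ reduces to a product of bounded factors multiplied by $[P_m,D(I+\varphi_\epsilon)(y)^*]$ with $y=(I+\varphi_n)(x)$.

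Controlling this residual commutator is the main obstacle, since Lemma \ref{lem3} supplies only the $C^0$-bound $\|\varphi_\epsilon\|\leq\epsilon$ and no direct bound on $D\varphi_\epsilon$. I would resolve this using the uniform $C^2$-bound on $\varphi^{\pm 1}$ assumed in the corollary: unpacking the factorization and exploiting the boundedness of the differentials (up to second order) of $L$ and of $(I+\varphi_n)^{\pm 1}$, one extracts a uniform $C^2$-estimate on $\varphi_\epsilon$ on any bounded set, independent of $\epsilon$. Interpolating between $\|\varphi_\epsilon\|_{C^0}\leq\epsilon$ and this $C^2$-bound then gives $\|D\varphi_\epsilon\|_{C^0}\to 0$ as $\epsilon\to 0$, from which the operator-norm convergence $[P_m,D(I+\varphi_\epsilon)(y)^*]\to 0$, uniform in $x$ on bounded sets, follows by combining the smallness of $D\varphi_\epsilon$ with the uniform boundedness of the projections $P_m$. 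With both admissibility conditions verified for $\varphi$, and by the symmetric argument for $\varphi^{-1}$ via Lemma \ref{lem6}, the infinite dimensional non-squeezing theorem applies and yields $r\leq R$.
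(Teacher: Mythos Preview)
Your proposal is correct and follows essentially the same route as the paper: verify the two admissibility conditions by exploiting the factorization $\varphi = L(I+\varphi_\epsilon)(I+\varphi_n)$ of Lemma~\ref{lem3}, use that $L$ and $(I+\varphi_n)$ commute with $P_m$ for $m\geq n$, and reduce everything to showing $\|D\varphi_\epsilon\|\to 0$; then invoke Lemma~\ref{lem6} to treat $\varphi^{-1}$ symmetrically. The paper's proof of the key step is the one-liner ``since $D^2\varphi$ is bounded, using the mean value theorem we deduce that $\|D\varphi_\epsilon(y)\|_{\mathcal{L}(\mathbb{H},\mathbb{H})}\to 0$'', which is exactly your interpolation between the $C^0$-bound $\|\varphi_\epsilon\|\leq\epsilon$ and a uniform $C^2$-bound coming from the hypothesis on $D^2\varphi^{\pm1}$; you have simply spelled this out more explicitly, and you have also flagged the globalization issue (extending $\varphi$ outside $B_r$), which the paper leaves implicit.
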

\proof
By assumption $\varphi$ is an elementary symplectomorphism, hence it can be represented as a rotation composed with a small perturbation of a finite dimensional map as in \eqref{simplcpt}. Since $P_n$ commutes with $I$, $L$ and $\varphi_n$ we obtain
\begin{align*}
(I-P_n) {\varphi}_{\vert P_n \mathbb{H}} \underset{n \rightarrow + \infty} \longrightarrow 0 \textrm{ uniformly on bounded sets}.
\end{align*}
By Lemma \ref{lem6} we know that also $\varphi^{-1}$ is an elementary symplectomorphism, therefore by an analogous argument we obtain \begin{align*}
(I-P_n) {\varphi^{-1}}_{\vert P_n \mathbb{H}} \underset{n \rightarrow + \infty} \longrightarrow 0 \textrm{ uniformly on bounded sets}.
\end{align*}
We compute 
\begin{align*}
D \varphi (x) =  L D\big{(}(I + \varphi_\epsilon) (I + \varphi_n)(x)\big{)} =L(I + D\varphi_\epsilon(y)) (I + D \varphi_n(x)) 
\end{align*}
where $y=(I + \varphi_n)(x)$, thus
\begin{align*}
D \varphi (x)^* =  (I + D \varphi_n(x)^*) (I + D\varphi_\epsilon(y)^*)  L^*.
\end{align*}
This implies 
\begin{align*}
P_n D \varphi (x)^* &= P_n (I + D \varphi_n(x)^*) (I + D\varphi_\epsilon(y)^*)  L^*\\
 &= P_n L^* + P_n D \varphi_n(x)^*L^* + P_n D\varphi_\epsilon(y)^* L^*+ P_n  D \varphi_n(x)^* D\varphi_\epsilon(y)^*L^*
\end{align*}
and 
\begin{align*}
 D \varphi (x)^*  P_n &= (I + D \varphi_n(x)^*) (I + D\varphi_\epsilon(y)^*)  L^* P_n\\
 &=  L^* P_n +  D \varphi_n(x)^* L^*P_n +  D\varphi_\epsilon(y)^* L^*P_n +   D \varphi_n(x)^* D\varphi_\epsilon(y)^* L^*P_n.
\end{align*}
Since $[P_n,L^*]=[P_n,L^{-1}]=0$ and $[P_n, D \varphi_n(x)^*L^*] =0$ we get
\begin{align}
\label{eqabove}
[P_n, D \varphi (x)^*] = [P_n, D\varphi_\epsilon(y)^*L^*]+ [P_n, D \varphi_n(x)^* D\varphi_\epsilon(y)^*L^*].
\end{align}
Since $D^2 \varphi$ is bounded, using the mean value theorem we deduce that\\ $\|D\varphi_\epsilon(y) \|_{\mathcal{L}(\mathbb{H},\mathbb{H})}\underset{n \rightarrow + \infty} \longrightarrow 0$. Using \eqref{eqabove} together with the fact that $D \varphi$, $P_n$ and $L$ are bounded we get
\begin{align*}
[P_n, D \varphi (x)^*]\underset{n \rightarrow + \infty} \longrightarrow 0 \textrm{ in operators' norm, uniformly in} \ x \in \mathbb{H} \textrm{ on bounded sets}.
\end{align*}
An analogous argument implies that 
\begin{align*}
[P_n, D \varphi^{-1} (x)^*]\underset{n \rightarrow + \infty} \longrightarrow 0 \textrm{ in operators' norm, uniformly in} \ x \in \mathbb{H} \textrm{ on bounded sets},
\end{align*}
thus $\varphi$ is an admissible symplectomorphism and we can apply the non-squeezing theorem.
\endproof
\begin{oss} 
The biggest class of symplectomorphisms (which is not a group) for which our proof of non-squeezing works is the one for which the assumptions of Proposition \ref{propns} are fulfilled, namely:
\begin{align*}
Symp_A (\mathbb{H},\omega,\langle \cdot, \cdot \rangle):=& \big{\{} \varphi \in Symp(\mathbb{H},\omega) \ \big{|} \ \ D \varphi \textrm{ and } D \varphi^{-1} \textrm{ are bounded;} \\
& (I-P_n) {\varphi^{-1}}_{\vert P_n \mathbb{H}} \underset{n \rightarrow + \infty} \longrightarrow 0 \textrm{ uniformly on bounded sets}; \\
& [P_n , D \varphi^{-1} (x)^*]  \underset{n \rightarrow + \infty} \longrightarrow 0 \textrm{ in operators' norm, uniformly in} \ x \in \mathbb{H}\\
&\textrm{on bounded sets} \big{\}}.
\end{align*}
\end{oss}

\section{Non-squeezing as a critical point theory problem}
\label{sec4}
%The non-squeezing theorem for subsets of $\mathbb{R}^{2n}$ can be reduced to the problem of finding a suitable critical point for a distinguished Hamiltonian action functional (see Chapter 3 of \cite{HZ94}).
Let us consider the set of symplectomorphisms
\begin{align*}
Symp_B (\mathbb{H},\omega):= &\big{\{} \varphi \in Symp(\mathbb{H},\omega) \ \big{|} \ \  \varphi \textrm{ and } \varphi^{-1} \textrm{ have bounded differentials}\\
&\ \ \textrm{up to the third order.} \big{\}}.
\end{align*}
Adopting the same notation as in the previous section, our aim is to prove the following.
\begin{prop}
Let $(\mathbb{H},\omega)$ be a Hilbert symplectic space endowed with a compatible inner product, $B_r$ the ball centred in $0$ with radius $r$ and $Z_1$ a cylinder whose basis lays on a symplectic plane and has symplectic area $\pi$. Let $\varphi \in Symp_B (\mathbb{H})$, if $\varphi(B_r) \subset Z_1$ and there exists a \emph{(PS)}$^c$ sequence $\{x_n\}_{n \in \mathbb{N}}$ for $\mathcal{A}_K$ at level $c>0$, then $r \leq 1$.
\label{akc>0}
\end{prop}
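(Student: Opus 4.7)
The plan is to argue by contradiction, assuming $r > 1$, and to invert the strategy of Section \ref{sec3}. Under this hypothesis I construct exactly as in Section \ref{sec3} the auxiliary Hamiltonians $F(x) = g(|x|^2)$, $H = F \circ \varphi^{-1}$ (extended by $m$ outside $\varphi(B_r)$) and the modified $K$ (equal to $H$ on $\{q < 1\}$ and to $\rho(q)$ on $\{q \geq 1\}$). The goal is to transport the hypothetical (PS)$^c$ sequence $\{y_n\}$ of $\mathcal{A}_K$ at level $c > 0$ back via $\varphi^{-1}$ to a bounded (PS)$^c$ sequence of $\mathcal{A}_F$ at positive level, which is forbidden by Proposition \ref{propF}.

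To invoke Proposition \ref{PSinv} I first need $\{y_n\}$ to be $H^s$-bounded for some $s > \tfrac{1}{2}$, and the upgrading lemmas of Section \ref{sec2} deliver such a sequence starting from any $H^{\frac{1}{2}}$-bounded (PS) sequence; the hard part will therefore be to establish the a priori $H^{\frac{1}{2}}$-bound. The key point is that $K$ coincides with the non-degenerate quadratic form $\mu q$ outside a bounded set, and the parameter $\mu \in (\pi, 2\pi)$ is chosen precisely so that the Hamiltonian flow of $\mu q$ has period $\pi / \mu \neq 1$ in every symplectic plane and hence admits no non-trivial $1$-periodic orbit. A standard rescaling argument then yields boundedness: if $|y_n|_{\frac{1}{2}} \to +\infty$, the renormalized sequence $y_n / |y_n|_{\frac{1}{2}}$ would accumulate on a non-trivial $1$-periodic orbit of $\mu q$, contradicting the resonance-free choice of $\mu$. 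Once the bound is in hand, the regularity results of Section \ref{sec2} together with Proposition \ref{lemPS} produce an equivalent (PS)$^c$ sequence $\{\tilde y_n\}$, $H^s$-bounded for every $s < \tfrac{3}{2}$, with $K \circ \tilde y_n \to c_1$ uniformly in $t$ for some $c_1 \in \mathbb{R}$.

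Applying Proposition \ref{PSinv} to $\varphi^{-1} \in Symp_B(\mathbb{H})$, whose differentials are bounded up to order three, shows that $\{\varphi^{-1}(\tilde y_n)\}$ is an $H^{\frac{1}{2}}$-bounded (PS)$^c$ sequence of $\mathcal{A}_{K \circ \varphi}$ at the same level $c > 0$. The remaining step is to identify this with a (PS) sequence of $\mathcal{A}_F$, for which I need $c_1 < m$. If $c_1 > m$, the loops $\tilde y_n$ would eventually lie in $\{q \geq 1\}$ where $K = \rho(q)$ is essentially quadratic, and a resonance argument analogous to the one for $\mu q$, arranging $\rho'$ to avoid the value $\pi$ within the constraints imposed on $\rho$, contradicts $c > 0$; the borderline case $c_1 = m$ is excluded by observing that in the interior of $\{K = m\}$ one has $\nabla K = 0$, so the (PS) condition forces $\dot{\tilde y}_n \to 0$ in $L^2$ and hence $\mathcal{A}_K(\tilde y_n) \to -m < 0$. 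With $c_1 < m$, the loops $\tilde y_n$ are eventually contained in $\{K < m\} = \{H < m\} \subset \varphi(B_r) \cap \{q < 1\}$, the region on which $K \circ \varphi = F$; consequently $\{\varphi^{-1}(\tilde y_n)\}$ is a bounded (PS)$^c$ sequence for $\mathcal{A}_F$ at level $c > 0$, contradicting Proposition \ref{propF}.
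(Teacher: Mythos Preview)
Your architecture matches the paper's: contradiction with $r>1$, then show the (PS)$^c$ sequence is $H^{\frac12}$-bounded (the paper's Lemma~\ref{lembound} is exactly your rescaling argument), upgrade regularity via Proposition~\ref{lemPS}, localize the loops to the region where $K=H$, transport through $\varphi^{-1}$ via Proposition~\ref{PSinv}, and contradict Proposition~\ref{propF}. The paper localizes differently: instead of splitting on $c_1:=\lim K(\tilde y_n)$, it proves (Lemma~\ref{lemaut}) that the modified loops lie either entirely in $\overline{\{H<m\}}+B_\eta$ or entirely outside $\overline{\{H<m\}}$, because crossing the annulus where $\nabla K=0$ would force $\dot{\tilde y}_n\to 0$ in $L^2$ on a stretch where the loop must travel a fixed positive distance; it then rules out the outer case by Lemma~\ref{PSK<0}.

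Your handling of $c_1>m$ has a genuine gap. You propose to ``arrange $\rho'$ to avoid the value $\pi$'', but this is impossible under the constraints on $\rho$: since $\rho$ is smooth with $\rho(t)=m$ for $t\le 1$ and $\rho(t)=\mu t$ for large $t$, one has $\rho'(1)=0$ and eventually $\rho'=\mu>\pi$, so by the intermediate value theorem $\rho'$ must take the value $\pi$. There \emph{are} genuine $1$-periodic orbits of $\rho(q)$, namely circles in the first symplectic plane at the radius where $\rho'(q)=\pi$, and they cannot be excluded by a non-resonance choice. The paper (Lemma~\ref{PSK<0}) instead confronts the case $d:=\lim\rho'(q(\tilde y_n))=\pi$ head-on and computes that the limiting action is nonpositive, using the condition $\rho(t)\ge\mu t>\pi t$; without this computation your argument is incomplete. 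Your case $c_1=m$ is also imprecise: uniform convergence $K(\tilde y_n)\to m$ does not place the loops in the \emph{interior} of $\{K=m\}$, since they may approach the level set from $\{H<m\}$ or from $\{q>1\}$. The conclusion $\nabla K(\tilde y_n)\to 0$ is in fact correct, but it requires observing that $g'(t)\to 0$ as $g(t)\to m^-$ and $\rho'(t)\to 0$ as $t\to 1^+$ (both consequences of smoothness and flatness at the thresholds), rather than the set-membership argument you give.
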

It is worth to recall that the Hamiltonian $K$ is constructed starting from the symplectomorphism $\varphi$.
\begin{oss}
Given an arbitrary $\varphi \in Symp_B (\mathbb{H})$, writing down the action functional $\mathcal{A}_K$ explicitly is easy, but finding a suitable (PS)$^c$ for $\mathcal{A}_K$ can be an extremely difficult task (if possible).
Anyway this task could be made solvable by requiring additional conditions on $\varphi$.
\end{oss}
Starting from now we focus on proving Proposition \ref{akc>0}.
Let us consider a map $\varphi \in Symp_B (\mathbb{H},\omega)$ and assume that $\varphi(B_r) \subset Z_1$, with $r>1$. Under this assumption we obtain the following lemmata.
%We notice that any (PS)$^c$ sequence of $\mathcal{A}_K$ is necessarily bounded.
\begin{lem}
Any sequence $\{x_n\}_{n \in \mathbb{N}} \in E$ such that $\nabla_{\frac{1}{2}} \mathcal{A}_K(x_n) \underset{n \rightarrow + \infty} \longrightarrow 0$ is $H^{\frac{1}{2}}$-bounded.
\label{lembound}
\end{lem}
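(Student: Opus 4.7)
The plan is to split $K$ into its asymptotically quadratic part plus a remainder whose gradient is globally bounded, and to recognise the linearised problem as a Fourier multiplier that happens to be invertible. Set $R := K - \mu q$ on $\mathbb{H}$, where $q$ is the quadratic form introduced in Section \ref{sec3} and $\mu$ is the constant used to define the cut-off $\rho$. By the construction of $K$, the identity $\nabla K(x) = 2\mu A x$ holds outside a bounded set of $\mathbb{H}$ (with $A$ the bounded self-adjoint operator such that $q(x) = \langle A x , x \rangle$), while on the remaining bounded region both $\nabla K$ and $2\mu A x$ are uniformly bounded. Hence $\nabla R : \mathbb{H} \to \mathbb{H}$ is globally bounded, and Lemma \ref{lemgrad} yields $|T^* \nabla R(x)|_{\frac{1}{2}} \leq |\nabla R(x)|_{L^2} \leq C$ for every $x \in E$. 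Introducing the bounded linear operator $L : E \to E$ by $L(x) := (P^+ - P^-)x - 2\mu\, T^* A x$, one has $\nabla_{\frac{1}{2}} \mathcal{A}_K(x_n) = L(x_n) - T^* \nabla R(x_n) \to 0$, so the hypothesis forces $L(x_n)$ to stay bounded in $E$. The whole lemma thus reduces to showing that $L$ is a bounded isomorphism of $E$.

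Since $A$ respects the splitting of $\mathbb{H}$ into the symplectic planes spanned by $\{e_i, f_i\}$ (with eigenvalue $1$ on the first plane and $1/N^2$ on its orthogonal complement), it commutes with $J$ and hence with every Fourier projection $P^k$. Consequently $L$ diagonalises in Fourier modes, acting on the $k$-th mode by $L|_k = (\operatorname{sgn} k)\, I - \frac{\mu}{\pi |k|} A$ for $k \neq 0$ and $L|_0 = -2\mu A$. Using $\pi < \mu < 2\pi$ and $N$ large, one checks: for $k = 1$ the eigenvalues of $L|_1$ are $1 - \mu/\pi$ (nonzero by the strict inequality $\mu > \pi$, lying in $(-1, 0)$) and $1 - \mu/(\pi N^2)$; for $k \geq 2$ the eigenvalues of $I - \frac{\mu}{\pi k} A$ are bounded below by $1 - \mu/(2\pi) > 0$; for $k \leq -1$ the eigenvalues of $L|_k$ are at most $-1$; and $L|_0 = -2\mu A$ is invertible with $\|L|_0^{-1}\| \leq N^2/(2\mu)$. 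These spectral gaps are uniform in $k$, so by the Fourier characterisation of the $H^{\frac{1}{2}}$-norm the operator $L^{-1} : E \to E$ is bounded. Applying $L^{-1}$ to the bounded sequence $L(x_n)$ then yields the required $H^{\frac{1}{2}}$-bound on $\{x_n\}$.

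The delicate point is the resonant mode $k = 1$ on the distinguished symplectic plane, where the eigenvalue of $L|_1$ is exactly $1 - \mu/\pi$. Its non-vanishing is precisely why the construction in Section \ref{sec3} chose $\rho$ with $\rho'(t) \leq \mu$ for some $\mu$ \emph{strictly} greater than $\pi$ (traceable to $m > \pi$ in the definition of $g$); the limiting value $\mu = \pi$ would create a genuine resonance on this mode and $L$ would fail to be invertible. The companion inequality $\mu < 2\pi$ plays the dual role for the neighbouring mode $k = 2$, ensuring positivity of $1 - \mu/(2\pi)$; and $N$ large is needed to separate the smaller eigenvalue $1/N^2$ of $A$ from the critical values when inverting $L|_0$ and $L|_1$.
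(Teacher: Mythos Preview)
Your proof is correct and follows essentially the same strategy as the paper's: both split off the quadratic part $\mu q$ from $K$, use that $\nabla K - \nabla(\mu q)$ is globally bounded on $\mathbb{H}$, and reduce the question to the invertibility of the resulting linear Fourier multiplier on $E$, which is checked mode-by-mode using $\pi < \mu < 2\pi$. The only stylistic difference is that the paper argues by contradiction (normalising an assumed unbounded sequence and showing the normalised sequence is sent to zero by an invertible operator), whereas you argue directly that $L(x_n)$ is bounded and $L$ is an isomorphism; your Fourier-mode analysis is also more explicit than the paper's reference to Example~\ref{esempio}.
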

\proof
Let us assume that $\{x_n\}_{n \in \mathbb{N}}$ in unbounded sequence and define the bounded sequence $\{y_n\}_{n \in \mathbb{N}}$ of elements
\begin{align*}
y_n:= \dfrac{x_n}{|x_n|_{\frac{1}{2}}}
\end{align*}
with unit norm.
Since 
\begin{align*}
x_n ^+ -x_n ^- - \nabla_{\frac{1}{2}}b_K (x_n)= \nabla_{\frac{1}{2}} \mathcal{A}_K(x_n) \underset{n \rightarrow + \infty} \longrightarrow 0
\end{align*}
we can multiply by $\dfrac{1}{|x_n|_{\frac{1}{2}}}$ and deduce that
\begin{align}
\label{ooo}
y_n ^+ -y_n ^- - \dfrac{\nabla_{\frac{1}{2}}b_K (x_n)}{|x_n|_{\frac{1}{2}}} \underset{n \rightarrow + \infty} \longrightarrow 0.
\end{align}
Let us consider the Hamiltonian defined as $I(z):= \mu q(z)$ for any $z \in \mathbb{H}$. Using the fact that there exists a constant $L$ such that $|\nabla K(z) - \nabla I(z)| \leq L$ for any $z \in \mathbb{H}$, we get that 
\begin{align*}
|\dfrac{\nabla_{\frac{1}{2}}b_K (x_n)}{|x_n|_{\frac{1}{2}}} - \nabla_{\frac{1}{2}} I(y_n)|_{\frac{1}{2}}&=|T^*\big(\dfrac{\nabla K (x_n)}{|x_n|_{\frac{1}{2}}} - \nabla I(y_n)\big)|_{\frac{1}{2}}  \leq |\dfrac{\nabla K (x_n)}{|x_n|_{\frac{1}{2}}} - \nabla I(y_n)|_{L^2} \\&= \dfrac{1}{|x_n|_{\frac{1}{2}}}|\nabla K(x_n) - \nabla I(x_n)|_{L^2}\\
&\leq  \dfrac{L}{|x_n|_{\frac{1}{2}}} \underset{n \rightarrow + \infty} \longrightarrow 0,
\end{align*}
hence 
\begin{align*}
\dfrac{\nabla_{\frac{1}{2}}b_K (x_n)}{|x_n|_{\frac{1}{2}}} - \nabla_{\frac{1}{2}} I(y_n) \underset{n \rightarrow + \infty} \longrightarrow 0.
\end{align*}
This in combination with \eqref{ooo} implies that 
\begin{align}
\label{fff}
y_n ^+ -y_n ^- - \nabla_{\frac{1}{2}} I(y_n)  \underset{n \rightarrow + \infty} \longrightarrow 0.
\end{align}
Since $I(z)= \mu \|z\|^2$ with $\pi < \mu < 2 \pi$, then \eqref{fff} is equivalent to the condition 
\begin{align*}
A y_n \underset{n \rightarrow + \infty} \longrightarrow 0,
\end{align*}
with $A$ invertible linear operator (cfr. Example \ref{esempio}). Such a condition is fulfilled if and only if $|y_n|_{\frac{1}{2}}  \underset{n \rightarrow + \infty} \longrightarrow 0$, but we already know that $|y_n|_{\frac{1}{2}}=1$, hence the initial assumption that $\{x_n\}_{n \in \mathbb{N}}$ is unbounded leads to a contradiction.
\endproof
%The following lemma tells us that any (PS)$^c$ sequence of $\mathcal{A}_K$ is equivalent either to a (PS)$^c$ sequence of loops taking values in $\{ q \leq 1\}$ or to a sequence of loops taking values in $\{ q > 1\}$.
\begin{lem}
There exists a real number $\eta>0$ such that, given any $H^1$-bounded \emph{(PS)}$^c$ sequence $\{x_n\}_{n \in \mathbb{N}}$ of $\mathcal{A}_K$ for which $\nabla_{L^2}\mathcal{A}_K(x_n)$ is $L^2$-infinitesimal, then $\{x_n\}_{n \in \mathbb{N}}$ admits a \emph{(PS)}$^c$ subsequence 
%, then there exists an equivalent \emph{(PS)}$^c$ sequence 
$\{x_{n_k}\}_{n \in \mathbb{N}}$ 
whose elements are either loops taking values in $\overline{\{ H < m \}} + B_\eta  \subset \varphi(B_r)$ or in $ \mathbb{H} \backslash \overline{\{ H < m \}}$.
\label{lemaut}
\end{lem}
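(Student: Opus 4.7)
The plan is to extract a subsequence of $\{x_n\}$ along which $K$ is nearly constant along each loop, and then to trichotomize on the value of this constant relative to $m$. Arguing exactly as in the proof of Proposition \ref{lemPS}, from $\nabla_{L^2}\mathcal{A}_K(x_n) = -J\dot x_n - \nabla K(x_n) \to 0$ in $L^2$ and from the $L^2$-boundedness of $\dot x_n$ (which follows from $H^1$-boundedness of $\{x_n\}$), one obtains
\[
\tfrac{d}{dt}K(x_n(t)) = \langle \nabla K(x_n(t)), \dot x_n(t)\rangle \to 0 \quad \text{in } L^1(S^1).
\]
Since $H^1(S^1,\mathbb{H}) \hookrightarrow L^\infty(S^1,\mathbb{H})$, the loops take values in a common bounded subset of $\mathbb{H}$, so $K\circ x_n$ is uniformly bounded; up to extracting a subsequence we may assume $K(x_n(t)) \to c_K \in \mathbb{R}$ uniformly in $t$.

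Set $\eta := \lambda$, where $\lambda$ is the separation constant of \eqref{hplambda}. The key structural observation is that $\{K<m\} = \{H<m\}$ and $\{K>m\} = \{q>1\}$, since on $\{q\leq 1\}$ one has $K=H\leq m$ while on $\{q>1\}$ the function $K=\rho\circ q$ is strictly greater than $m$ by the strict monotonicity of $\rho$ on $(1,+\infty)$. By \eqref{hplambda}, $\overline{\{H<m\}}+B_\lambda\subset\{q<1\}$, so $\overline{\{H<m\}}$ is disjoint from $\{q>1\}$. Now trichotomize:
\begin{itemize}
\item[(i)] if $c_K < m$, then eventually $x_n(S^1)\subset\{K<m\}=\{H<m\}\subset \overline{\{H<m\}}+B_\eta\subset\varphi(B_r)$;
\item[(ii)] if $c_K > m$, then eventually $x_n(S^1)\subset\{K>m\}=\{q>1\}\subset \mathbb{H}\setminus\overline{\{H<m\}}$;
\item[(iii)] if $c_K = m$, we reach a contradiction with $c>0$.
\end{itemize}
A pigeonhole argument on $n$ then produces the desired (PS)$^c$ subsequence contained in one of the two prescribed sets.

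The main obstacle is case (iii). The plan is to prove $\dot x_n\to 0$ in $L^2$, whence
\[
\mathcal{A}_K(x_n) = -\tfrac{1}{2}\!\int_0^1\!\langle J\dot x_n, x_n\rangle\,dt - \!\int_0^1\! K(x_n)\,dt \;\longrightarrow\; 0 - m = -m,
\]
which contradicts $\mathcal{A}_K(x_n)\to c>0$. To obtain $\dot x_n\to 0$ in $L^2$, it suffices, in view of $J\dot x_n + \nabla K(x_n)\to 0$ in $L^2$, to show that $\nabla K(x_n)\to 0$ in $L^2$. For this we establish the following uniform estimate: on any bounded $B\subset\mathbb{H}$, $\|\nabla K(x)\|\to 0$ as $K(x)\to m$, $x\in B$. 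This is verified by a direct case analysis from the definition of $K$: on $\varphi(B_r)\cap\{q\leq 1\}$ one has $K=g(|\varphi^{-1}|^2)$ and $g'$ vanishes identically on $[r-\delta,+\infty)$, hence by smoothness $g'(|\varphi^{-1}(x)|^2)\to 0$ as $K(x)\to m^-$, which drives $\nabla K(x)=2g'(|\varphi^{-1}|^2)D\varphi^{-*}(x)\,\varphi^{-1}(x)\to 0$ uniformly on $B$ using boundedness of $D\varphi^{-*}$ and $\varphi^{-1}$; on $\{q\leq 1\}\setminus\varphi(B_r)$ one has $K\equiv m$, so $\nabla K\equiv 0$; on $\{q>1\}$ one has $\nabla K=\rho'(q)\nabla q$ with $\rho'(1)=0$ by the smooth matching to the constant value $m$ on $[0,1]$, so $\nabla K(x)\to 0$ as $q(x)\to 1^+$, i.e.\ as $K(x)\to m^+$, uniformly in $x\in B$ thanks to the uniform boundedness of $\nabla q$ on $B$. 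Applying this estimate pointwise in $t$ yields $\|\nabla K(x_n(t))\|\to 0$ uniformly in $t$, hence $\nabla K(x_n)\to 0$ in $L^2$, completing the contradiction and the proof.
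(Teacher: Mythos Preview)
Your argument in cases (i) and (ii) is correct, and the structural identifications $\{K<m\}=\{H<m\}$ and $\{K>m\}=\{q>1\}$ are valid. The gap is in case (iii): you derive a contradiction from $c>0$, but $c>0$ is \emph{not} a hypothesis of the lemma (it is assumed only later, in Proposition~\ref{akc>0}). As stated, the lemma must hold for every level $c$, including $c=-m$, so case (iii) cannot be disposed of by contradiction. The fix is immediate with the work you already did: from $\dot x_n\to 0$ in $L^2$ one gets
\[
\operatorname{diam}\bigl(x_n(S^1)\bigr)\ \le\ \int_0^1 \|\dot x_n(t)\|\,dt\ \le\ |\dot x_n|_{L^2}\ \longrightarrow\ 0,
\]
so for $n$ large the image $x_n(S^1)$ lies in a ball of radius less than $\eta$; hence it is contained in $\overline{\{H<m\}}+B_\eta$ if it meets $\overline{\{H<m\}}$, and in $\mathbb{H}\setminus\overline{\{H<m\}}$ otherwise. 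This restores the dichotomy in case (iii) and completes your proof.

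It is worth noting that the paper's argument is different and somewhat more direct. Rather than extracting a limiting value $c_K$ of $K$ along the loops and trichotomizing, the paper uses a local ``crossing'' argument: on the collar $(\overline{\{H<m\}}+B_\lambda)\setminus\overline{\{H<m\}}$ one has $K\equiv m$ identically (this set lies in $\{q<1\}\setminus\{H<m\}$), so $\nabla K\equiv 0$ there. If some $x_n$ met both $\overline{\{H<m\}}$ and the complement of $\overline{\{H<m\}}+B_\lambda$, then on the time interval spent in the collar one would have, from $J\dot x_n+\nabla K(x_n)\to 0$ in $L^2$, that $\int|\dot x_n|^2\to 0$ over that interval, contradicting the fact that the loop must travel distance at least $\lambda$ there. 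This avoids both the subsequence extraction for $K\to c_K$ and the case analysis of $\nabla K$ near the level $\{K=m\}$; it uses only the exact vanishing of $\nabla K$ on the collar, which is immediate from the construction of $K$. Your route, once repaired, gives additional information (namely $c=-m$ in the borderline case), at the cost of a finer analysis of $\nabla K$.
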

\proof
According to \eqref{hplambda} we can find a value $\lambda>0$ such that 
\begin{align*}
\overline{\{ H < m \}} + B_\lambda \subset \varphi(B_r).
\end{align*}
%Any (PS)$^c$ sequence $\{x_n\}_{n\in \mathbb{N}}$ found using Lemma \ref{lemPS} is such that $x_n \in H^{1}(S^1,\mathbb{H})$ and $\nabla _{\frac{1}{2}}\mathcal{A}_K(x_n)$ is infinitesimal in the $L^2$ norm.
We claim that, up to a subsequence, if $x_n(t_n^0) \in \mathbb{H} \backslash (\overline{\{ H < m \}} + B_\lambda)$ for some time $t_n^0$ then $x_n(t) \notin \overline{\{ H < m \}} $ for any $t \in[0,1]$ (and vice-versa).\\
Indeed, if not there are times $0\leq t_n^0 <t_n ^1 \leq1$ for which $x_n(t^0 _n) \in \mathbb{H} \backslash (\overline{\{ H < m \}} + B_\lambda)$, $x_n(t^1_n) \in  \overline{\{ H < m \}}$ and $x_n(t) \in (\overline{\{ H < m \}} + B_\lambda) \backslash \overline{\{ H < m \}}$ for $t \in [t_n ^0 ,t_n^1]$.
Since $x_n$ is absolutely continuous, we get
\begin{align*}
x_n(t^1_n)= x_n(t^0_n) + \int_{t^0_n} ^{t^1_n}  \dot{x}_n (s) ds. 
\end{align*}
The point $x_n(t^1_n)$ has distance at least $\lambda$ from $x_n(t^0_n)$, therefore
\begin{align*}
\lambda \leq |x_n(t^1_n)-x_n(t^0_n)|=| \int_{t^0_n} ^{t^1_n}  \dot{x}_n (s) ds | \leq  \int_{t^0_n} ^{t^1_n}  |\dot{x}_n (s)| ds.
\end{align*}
We know that
\begin{align}
\label{est2}
|J \dot{x}_n +  \nabla K(x_n)|_{L^2}  \underset{n \rightarrow + \infty} \longrightarrow 0 
\end{align}
and since $K$ is constant on $(\overline{\{ H < m \}} + B_\lambda) \backslash \overline{\{ H < m \}}$, for any $t_n^0 < s < t_n^1$ we have that
\begin{align*}
\nabla K (x_n(s))=0.
\end{align*}
Thus \eqref{est2} implies that
\begin{align*}
\int_{t^0_n} ^{t^1_n} |\dot{x}_n(t)|^2 dt = \int_{t^0_n} ^{t^1_n}  |J \dot{x}_n(t)|^2  dt  \underset{n \rightarrow + \infty} \longrightarrow 0,
\end{align*}
but this fact is in contradiction with the estimate
\begin{align*}
\lambda \leq \int_{t^0_n} ^{t^1_n}  |\dot{x}_n (s)| ds \leq  C \int_{t^0_n} ^{t^1_n}   |\dot{x}_n(t)|^2 dt,
\end{align*}
with $C$ independent on $n$, therefore $x_n$ cannot travel from $\mathbb{H} \backslash (\overline{\{ H < m \}} + B_\lambda)$ to $\overline{\{ H < m \}}$.
By setting $\eta := \frac{\lambda}{2}$ we deduce the result.
%By eventually approximating the loops $x_n$ with loops $y_n$ which lay either completely inside or completely outside of $\{ q \leq 1\}$ we find the sought (PS)$^c$ sequence.
\endproof
\begin{lem}
Let $\{x_n\}_{n \in \mathbb{N}}$ be a $H^1$-bounded \emph{(PS)}$^c$ sequence for $\mathcal{A}_K$ such that $\nabla_{L^2}\mathcal{A}_K(x_n)$ is $L^2$-infinitesimal. If all the loops $x_n$ have support in the set $\mathbb{H} \backslash \overline{\{ H < m \}}$ then the level $c$ is not positive.
\label{PSK<0}
\end{lem}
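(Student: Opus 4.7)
The plan is to rewrite $\mathcal{A}_K(x_n)$ using the almost Euler--Lagrange equation so that its sign becomes transparent, and then show that the resulting integrand is pointwise $\leq 0$ on the set $\mathbb{H}\setminus \overline{\{H<m\}}$.

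First, since by assumption $\nabla_{L^2}\mathcal{A}_K(x_n)=-(J\dot{x}_n+\nabla K(x_n))$ tends to $0$ in $L^2$, set $z_n:=J\dot{x}_n+\nabla K(x_n)\to 0$ in $L^2$. Substituting $J\dot{x}_n=-\nabla K(x_n)+z_n$ into the definition of the action, I obtain
\begin{align*}
\mathcal{A}_K(x_n)=\int_0^1\Bigl(\tfrac{1}{2}\langle \nabla K(x_n(t)),x_n(t)\rangle - K(x_n(t))\Bigr)\,dt \;-\;\tfrac{1}{2}\int_0^1\langle z_n(t),x_n(t)\rangle\,dt.
\end{align*}
Because $\{x_n\}$ is $H^1$-bounded it is also $L^2$-bounded, so by Cauchy--Schwarz the last term is $o(1)$. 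Thus the whole game reduces to analysing pointwise the integrand $\mathcal{I}(x):=\tfrac{1}{2}\langle\nabla K(x),x\rangle-K(x)$ on $\mathbb{H}\setminus\overline{\{H<m\}}$.

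Next, I split into the two regions used in the construction of $K$. If $q(x_n(t))<1$, then $K=H$ near $x_n(t)$; since $H\leq m$ everywhere and $x_n(t)\notin\overline{\{H<m\}}$, the point lies in an open set on which $H\equiv m$, so $\nabla K(x_n(t))=0$ and $\mathcal{I}(x_n(t))=-m<0$. If instead $q(x_n(t))\geq 1$, then $K(x_n(t))=\rho(q(x_n(t)))$ and $\nabla K(x_n(t))=\rho'(q(x_n(t)))\nabla q(x_n(t))$. As $q$ is a positive quadratic form one has $\langle\nabla q(x),x\rangle=2q(x)$, so
\begin{align*}
\mathcal{I}(x_n(t))=\rho'(q(x_n(t)))\,q(x_n(t))-\rho(q(x_n(t))).
\end{align*}
The defining inequalities $\rho'(t)\leq \mu$ and $\rho(t)\geq \mu t$ for $t\geq 1$ immediately give $\mathcal{I}(x_n(t))\leq \mu q(x_n(t))-\mu q(x_n(t))=0$.

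Combining the two cases, $\mathcal{I}(x_n(t))\leq 0$ for all $t\in[0,1]$ and all $n$, hence
\begin{align*}
\mathcal{A}_K(x_n)\leq o(1),
\end{align*}
which passes to the limit and yields $c=\lim_n \mathcal{A}_K(x_n)\leq 0$, as required. I do not anticipate a genuine obstacle: every ingredient is elementary once the $L^2$-identity for the action along almost-solutions of $J\dot{x}=-\nabla K(x)$ is written down, and the sign of $\mathcal{I}$ ultimately reflects the convexity-type inequality $\rho(t)\geq \rho'(t)\,t$ for $t\geq 1$ built into the choice of $\rho$, together with the flatness $H\equiv m$ outside $\overline{\{H<m\}}$.
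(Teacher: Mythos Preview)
Your argument is correct and considerably more direct than the paper's. The identity you derive by substituting $J\dot{x}_n=-\nabla K(x_n)+z_n$ into the action is valid on $H^1$ loops, the $o(1)$ remainder is controlled exactly as you say, and the pointwise sign analysis of $\mathcal{I}(x)=\tfrac{1}{2}\langle\nabla K(x),x\rangle-K(x)$ uses only the built-in inequalities $\rho(t)\geq \mu t$ and $\rho'(t)\leq\mu$ for $t\geq 1$, together with the flatness $H\equiv m$ outside $\overline{\{H<m\}}$.

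By contrast, the paper's proof is structural rather than variational: it first invokes the uniform convergence $K(x_n(t))\to c_1$ established in Proposition~\ref{lemPS} to deduce $\rho'(q(x_n(t)))\to d$ uniformly, then shows via a Fourier-mode computation (exploiting that $Q$ commutes with the Fourier projectors) that $|x_n^-|_{\frac{1}{2}}\to 0$ and $|x_n^+-x_n^1|_{\frac{1}{2}}\to 0$, and finally splits into the cases $d\neq\pi$ and $d=\pi$ to conclude $c\leq 0$. What the paper's route buys is information about the asymptotic shape of the (PS) sequence itself, not just the level; what your route buys is a one-line sign argument with no case distinction, no Fourier analysis, and no appeal to the autonomous-Hamiltonian energy conservation of Proposition~\ref{lemPS}. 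For the purpose of this lemma your proof is the cleaner one.
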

\proof
For any loop $x$ taking values in $ \mathbb{H} \backslash \overline{\{ H < m \}}$ we have that
\begin{align*}
\nabla K(x)= 2 \rho '(q(x) Q x
\end{align*}
where $Q:= dq$ is the linear map which, on the Hilbert symplectic basis $ \{e_i,f_i\}_{i\in \mathbb{N}} $ of $\mathbb{H}$, is written as 
\begin{align*}
Q:\mathbb{H} &\rightarrow \mathbb{H},\\
(q_1,p_1)&\mapsto (q_1,p_1), \\
(q_i,p_i) &\mapsto  \dfrac{1}{N^2}(q_i,p_i) \textrm{ \ \ if \ \ } i\geq 2.
\end{align*}
We are interested in the relation between the Fourier coefficients of a curve $x \in L^2(S^1,\mathbb{H})$ and the ones of the curve $Qx$.\\
Let
\begin{align*}
x(t) = \sum_{k \in \mathbb{Z}} e^{2 \pi k Jt} x^k,
\end{align*}
we write any coefficient $x^k \in \mathbb{H}$ as
\begin{align*}
x^k=(x^k _1,\hat{x} ^k _1),
\end{align*} 
where $x_1 ^k$ is given by the components of $x^k$ in the 2-dimensional plane spanned by $\{ e_1,f_1\}$. We denote with $J_1$ the two dimensional linear operator mapping $e_1 \mapsto f_1$ and $f_1 \mapsto -e_1$ and we define $\hat{J}_1:=J-J_1$.\\
The image of $x$ via $Q$ is 
\begin{align*}
Q(x(t)) = Q(\sum_{k} e^{2 \pi k Jt} x^k)= \sum_{k} Q(e^{2 \pi k Jt} x^k).
\end{align*}
The operators $J_1$ and $\hat{J}_1$ commute, thus for any $k \in \mathbb{Z}$ we deduce that 
\begin{align*}
Q(e^{2 \pi k Jt} x^k)&=Q(e^{2 \pi kt (J_1 + \hat{J}_1)} x^k) =Q(e^{2 \pi k t J_1} e^{2 \pi k t \hat{J}_1}x^k )\\
&=Q(e^{2 \pi k t J_1}x^k_1 + e^{2 \pi k t \hat{J}_1}\hat{x}^k_1 )=e^{2 \pi k t J_1}x^k_1 + \frac{1}{N^2} e^{2 \pi k t \hat{J}_1}\hat{x}^k_1 \\
&=e^{2 \pi k t J_1}x^k_1 + e^{2 \pi k t \hat{J}_1}  \dfrac{1}{N^2} \hat{x}^k_1=e^{2 \pi k t J_1}e^{2 \pi k t \hat{J}_1} Q (x^k)\\
&=e^{2 \pi k Jt} Q(x^k),
\end{align*} 
hence the $k$-th Fourier coefficient of $Qx$ is $Qx^k$.\\
Let us consider a $H^1$-bounded (PS)$^c$ sequence of loops taking values in the set $\mathbb{H} \backslash \overline{\{ H < m \}}$; inferring as in the second part of the proof of Proposition \ref{lemPS} we obtain that
 \begin{align*}
K(x_n) \underset{n \rightarrow + \infty} \longrightarrow c_1 \in \mathbb{R} \cup \{\pm \infty\} \ \ \textrm{ uniformly in } t.
\end{align*}
Since the sequence is $L^2$-bounded, it has to converge to a constant $c_1\in \mathbb{R}$, thus we deduce the uniform convergence of
\begin{align*}
\rho(q(x_n(t))) \underset{n \rightarrow \infty}\longrightarrow  c_1 \in \mathbb{R}
\end{align*}
and hence of
\begin{align*}
\rho ' (q(x_n(t))) \underset{n \rightarrow \infty}\longrightarrow  d  \in \mathbb{R}.
\end{align*}
This implies that
\begin{align*}
\nabla K(x_n(t)) -2  d Q x _n(t)  \underset{n \rightarrow \infty}\longrightarrow  0 \ \ \textrm{ uniformly in } t,
\end{align*}
and hence
\begin{align*}
\nabla K(x_n) -2  d Q x _n  \underset{n \rightarrow \infty}\longrightarrow  0 \ \ \textrm{ in } L^2(S^1,\mathbb{H}).
\end{align*}
By Lemma \ref{lemgrad} it follows that
\begin{align*}
|\nabla_{\frac{1}{2}} b_K(x_n) -2  dT^* Q x _n|_{\frac{1}{2}} = |T^*(\nabla K(x_n) -2  dQ x _n)|_{\frac{1}{2}} \leq  |\nabla K(x_n) -2  dQ x _n|_{L^2},
\end{align*}
thus
\begin{align*}
\nabla_{\frac{1}{2}} b_K(x_n) -2  d T^*Qx _n \underset{n \rightarrow \infty}\longrightarrow  0 \ \ \textrm{ in } H^{\frac{1}{2}}(S^1,\mathbb{H}).
\end{align*}
In order for $x_n=(x_n ^- ,x_n ^0 ,x_n ^+)$ to define a (PS)$^{c}$ sequence for $\mathcal{A}_{K}$ it is necessary that $\nabla_{\frac{1}{2}} a(x_n) - \nabla_{\frac{1}{2}} b_{K}(x_n)  \underset{n \rightarrow \infty}{\longrightarrow} 0$ in the $H^{\frac{1}{2}}$-norm, % in particular
thus it is necessary that
\begin{align}
\label{nnn}
x_n^+ - x_n ^- - 2  d T^*Qx _n = \nabla_{\frac{1}{2}} a(x_n) - 2  d T^*Qx _n   \underset{n \rightarrow \infty}{\longrightarrow} 0,  \ \ \textrm{ in } H^{\frac{1}{2}}(S^1,\mathbb{H}).
\end{align}
Because of Lemma \ref{lemgrad} we know that 
\begin{align*}
& 2 d T^*Qx _n^0 =   2 d Q x^0 _n,\\
&2 d T^*Qx _n^k = \dfrac{2 d}{2 \pi |k|} Q x_n ^k  \textrm{ for } k \neq 0.
\end{align*}
and since $0 \leq d < 2 \pi - \epsilon$ for some $\epsilon>0$, then 
\begin{align*}
\frac{\epsilon}{2\pi} |x_n^+ - x_n^1|_{\frac{1}{2}}  &  \leq |(x_n^+ - x_n^1)  - 2  d T^*Q(x_n^+ - x_n^1) ^+|_{\frac{1}{2}} ,  \\
|x_n^-|_{\frac{1}{2}}   &\leq |x_n^-  + 2  d T^*Qx _n ^-|_{\frac{1}{2}} .
\end{align*}
Therefore \eqref{nnn} is possible only if $|x_n ^+ - x_n^1|_\frac{1}{2} ^2  \underset{n \rightarrow \infty}\longrightarrow 0$ and $|x_n ^-|_\frac{1}{2} ^2   \underset{n \rightarrow \infty}\longrightarrow 0$.\\
We know that $0 \leq d< 2\pi$ and we shall consider two cases: when $d \neq \pi$ and when $d = \pi$.\\
If $d \neq \pi$ then, by Lemma \ref{lemgrad}, in order for $\{x_n\}_{n \in \mathbb{N}}$ to be a (PS)$^{c}$ sequence it is also necessary that $|x_n ^1|_\frac{1}{2} ^2  \underset{n \rightarrow \infty}\longrightarrow 0$, and since 
\begin{align*}
\mathcal{A}_K(x_n)= \dfrac{1}{2} |x^+_n|_{\frac{1}{2}}^2 - \dfrac{1}{2} | x^-_n|_{\frac{1}{2}}^2- b_K(x_n),
\end{align*}
we get that
\begin{align*}
\mathcal{A}_K(x_n) \underset{n \rightarrow \infty}\longrightarrow  c,
\end{align*}
with $ c \leq 0$.\\
If $d=\pi$, then $\{x_n\}_{n \in \mathbb{N}}$ approaches curves of the form %uniformly
\begin{align*}
\overline{x}(t)= e ^{2 \pi J t} (q_1,p_1,0,0,\ldots)
\end{align*}
with $\| (q_1,p_1,0,0,\ldots) \|= \pi + \xi$, where, by the definition of $\rho$, we have $\pi + \xi \leq m$.
In this case it follows that
\begin{align*}
\mathcal{A}_K (x_n) \underset{n \rightarrow \infty}\longrightarrow  \pi + \xi -m \leq 0.
\end{align*}
\endproof
We are finally ready to prove Proposition \ref{akc>0}.
\proof
Let us assume by contradiction that, given $r>1$, the ball $B_r$ can be symplectically squeezed into $Z_1$ by a symplectomorphism $\varphi \in Symp_B (\mathbb{H})$.\\
By Lemma \ref{lembound} the (PS)$^c$ sequence $\{x_n\}_{n \in \mathbb{N}}$ has to be bounded, moreover Proposition \ref{lemPS} and Lemma \ref{lemaut} tell us that it is possible to modify the sequence in such a way that it becomes $H^1$-bounded and its elements lay either entirely in a fixed domain $\overline{\{ H < m \}} + B_\eta  \subset \varphi(B_r)$ on which $\mathcal{A}_K$ and $\mathcal{A}_H$ coincide, or outside of $\overline{\{ H < m \}}$.\\
The latter possibility is excluded by Lemma \ref{PSK<0}, hence we can find a $H^1$-bounded (PS)$^c$ sequence $\{x_n\}_{n \in \mathbb{N}}$ at positive level for $\mathcal{A}_H$. Therefore according to Proposition \ref{PSinv} we can find a (PS)$^c$ sequence $\{\varphi^{-1}(x_n)\}_{n \in \mathbb{N}}$ for $\mathcal{A}_F$ at positive level, but this is in contradiction with Proposition \ref{propF}.
\endproof

\newpage
 \thispagestyle{empty}
\section*{\huge References}
\begin{bibliography}{A}
\bibitem[Abb01]{Abb01}  A. Abbondandolo,  \emph{Morse Theory for Hamiltonian Systems},  Chapman $\&$ Hall/CRC Research Notes in Mathematics \textbf{425}, (2001).
\bibitem[AM15]{AM15}  A. Abbondandolo and P. Majer, \emph{A non-squeezing theorem for symplectic images of the Hilbert ball}, Calc. Var. Partial Diff. Equ. \textbf{54}, 1469-1506 (2015).
\bibitem[Bou94a]{Bou94a}  J. Bourgain,  \emph{Periodic nonlinear Schr\"odinger equation and invariant measures}, Comm. Math. Phys.  \textbf{166}, no.1, 1-26 (1994).
\bibitem[Bou94b]{Bou94b}  J. Bourgain, \emph{Approximation of solutions of the cubic nonlinear Schr\"odinger equations by finite-dimensional equations and nonsqueezing properties},  Internat. Math. Res. Not. \textbf{2}, 79-90 (1994).
\bibitem[CM74]{CM74}  P. R. Chernoff and J. E. Marsden, \emph{Properties of Infinite dimensional Hamiltonian systems}, LNM \textbf{425}, Springer, Berlin (1974).
\bibitem[CKS+05]{CKS+05} J. Colliander, M. Keel, G. Staffilani, H. Takaoka, and T. Tao, \emph{Symplectic non-squeezing of the Korteweg-de Vries flow}, Acta Math. \textbf{195}, 197-252 (2005).
\bibitem[CKS+10]{CKS+10} J. Colliander, M. Keel, G. Staffilani, H. Takaoka, and T. Tao, \emph{Transfer of energy to high frequencies in the cubic defocusing nonlinear}\\ \emph{Schr\"odinger equation}, Invent. Math. \textbf{181}, 39-113 (2010).
\bibitem[CLL97]{CLL97}  K. C. Chang, J. Liu and M. Liu, \emph{Nontrivial periodic solutions for strong resonance Hamiltonian systems}, Ann. Inst. H. Poincare- Analyse Non-lin., \textbf{14}, 103-117 (1997).
\bibitem[Fri85]{Fri85}  L. Friedlander, \emph{An invariant measure for the equation $u_{tt} - u_{xx} + u^3= 0$}, Commun. Math. Phys. \textbf{98}, 1-16 (1985).
\bibitem[HZ90]{HZ90}  H. Hofer and E. Zehnder, \emph{A new capacity for symplectic manifolds},  Analysis, et Cetera, 405-427 (1990).
\bibitem[HZ94]{HZ94}  H. Hofer and E. Zehnder,  \emph{Symplectic invariants and Hamiltonian dynamics}, Birkh\"auser (1994).
\bibitem[Kuk95a]{Kuk95a}  S. B. Kuksin,  \emph{Infinite-dimensional symplectic capacities and a squeezing theorem for Hamiltonian PDE's}, Commun. Math. Phys. \textbf{167}, 531-552 (1995).
\bibitem[Kuk95b]{Kuk95b}  S. B. Kuksin,  \emph{On squeezing and flow of energy for nonlinear wave equations}, Geom. Funct. Anal. \textbf{5}, 668-701 (1995).
\bibitem[Rou10]{Rou10}  D. Roum\' egoux, \emph{A symplectic non-squeezing theorem for BBM equation},  Dyn. Part. Differ. Equ. \textbf{7}, 289-305 (2010).
\bibitem[Zak74]{Zak74}  V. E. Zakharov,  \emph{Hamiltonian formalism for waves in nonlinear media with a disperse phase}, Izv. Vysch. Uchebn. Zaved. Radiofiz. \textbf{17}, 431-453 (1974).
\end{bibliography} 

\newpage
 \thispagestyle{empty}

\end{document}